\documentclass[11pt,reqno]{amsart}

\usepackage{amsmath,amssymb}
\usepackage{microtype}
\usepackage[hidelinks]{hyperref}
\allowdisplaybreaks
\numberwithin{equation}{section}

\newtheorem{theorem}{Theorem}[section]
\newtheorem{proposition}[theorem]{Proposition}
\newtheorem{lemma}[theorem]{Lemma}
\newtheorem{corollary}[theorem]{Corollary}
\theoremstyle{remark}

\newcommand{\E}{\mathbb E}
\newcommand{\dd}{\,\mathrm d}
\newcommand{\e}{\mathrm e}
\newcommand{\MP}{\mathrm{MP}}
\newcommand{\unit}{\mathrm i}
\newcommand{\DReal}{\Delta^{\mathbb R}}
\renewcommand\le{\leqslant}
\renewcommand\ge{\geqslant}

\title[Average Singular Values and the Rectangularity Transition]{Dimension Monotonicity in Laguerre Ensembles II: Average Singular Values and the Rectangularity Transition in the Orthogonal Case}
\author{Ondrej Hutn\'{i}k}
\address{Institute of Mathematics, Faculty of Science, Pavol Jozef \v{S}af\'{a}rik University in Ko\v{s}ice, Jesenn\'{a} 5, 040 01 Ko\v{s}ice, Slovakia}
\email{ondrej.hutnik@upjs.sk}
\subjclass[2020]{Primary 60B20; Secondary 15A18, 33C45}
\keywords{average singular value, Laguerre orthogonal ensemble, real Gaussian matrix, rectangularity transition, dimension monotonicity, positive diagonal kernel}
\hypersetup{pdftitle={Dimension Monotonicity in Laguerre Ensembles II: Average Singular Values and the Rectangularity Transition in the Orthogonal Case},pdfauthor={Ondrej Hutnik}}

\begin{document}
\begin{abstract}
We study the normalized half moment
$\alpha_{\mathbb R}^{(\lambda)}(N)$ of the size-$N$ Laguerre orthogonal
ensemble for real shape $\lambda\ge0$.  At integer shape this is the expected
average singular value of an $N\times(N+\lambda)$ real Gaussian matrix.  The
square mean increases with the dimension, whereas every real shape
$\lambda\ge1$ decreases.  Between these two regimes the decrement is strictly
increasing in $\lambda$, and hence has a unique zero in $(0,1)$ for every
$N$.  There is also a unique crossing of the Marchenko--Pastur
limit.  Both thresholds converge to
$\lambda_*=1-\pi/4$, and their first corrections show that they separate on
the scale $(\log N)/N$.  The proof starts from an exact decomposition of the
real half moment into its complex counterpart and a positive orthogonal correction.  Recent
unitary estimates take care of the complex term.  An Abel completion,
together with a Laguerre connection formula, turns the orthogonal correction
into a positive diagonal series; the square case, the regime $\lambda\ge1$,
and the transition can then all be read from this same series.
\end{abstract}
\maketitle

\section{Introduction}\label{sec:intro}

\subsection{The model and the question}
For real $\lambda\ge0$, let the size-$N$ Laguerre orthogonal ensemble have
ordered eigenvalue density on
\[
 \mathcal W_N=\{0<x_1<\cdots<x_N\}
\]
proportional to
\begin{equation}\label{eq:LOE-jpd}
 \prod_{i=1}^N x_i^{(\lambda-1)/2}\e^{-x_i/2}
 \prod_{1\le i<j\le N}(x_j-x_i).
\end{equation}
This is the $\beta=1$ Laguerre ensemble with continuous shape parameter; see
\cite[Chapter~3]{Forrester} and the tridiagonal model of
Dumitriu--Edelman~\cite{DumitriuEdelman}.  When $\lambda\in\mathbb N_0$, it
is the eigenvalue law of
$X_{N,\lambda}^{\mathbb R}(X_{N,\lambda}^{\mathbb R})^T$ for an
$N\times(N+\lambda)$ matrix of independent $\mathcal N(0,1)$ entries.
Thus $\lambda$ has the same meaning as in Paper~I---the excess number of
columns over rows; the different power of $x$ in the joint density is only the
usual $\beta=1$ versus $\beta=2$ Laguerre parametrization.  For $(x_1,\ldots,x_N)$ distributed according to
\eqref{eq:LOE-jpd}, set
\[
 \alpha_{\mathbb R}^{(\lambda)}(N)
 :=N^{-3/2}\E\!\left[\sum_{j=1}^N\sqrt{x_j}\right].
\]
At integer shape this is exactly
$N^{-3/2}\E\|X_{N,\lambda}^{\mathbb R}\|_*$.  Following Paper~I, write
\[
 C_\lambda(N):=C_{1/2,\lambda}(N),\qquad
 \Gamma_{N,\lambda}:=\Gamma_{N,1/2,\lambda}
 =C_\lambda(N)-C_\lambda(N+1)
\]
for the analogous unitary half moment and its decrement.  Define the real
decrement by
\[
 \DReal_N(\lambda)
 :=\alpha_{\mathbb R}^{(\lambda)}(N)
   -\alpha_{\mathbb R}^{(\lambda)}(N+1).
\]
Thus, both papers use the same decrement orientation: a positive decrement means
that the normalized quantity decreases when the dimension is increased.  Both
$\alpha_{\mathbb R}^{(\lambda)}(N)$ and $C_\lambda(N)$ tend to $8/(3\pi)$
when the shape remains bounded. What matters here is the finite-dimensional direction of approach: \textit{Is the mean above or below its limiting value, and does one more dimension move it upward or downward?}

The square case already shows that the answer depends on the symmetry class.
For complex Gaussian matrices the mean decreases with $N$, whereas in the
real square case it increases.  The complex question arose in the Gaussian
rounding problem of Bandeira--Kennedy--Singer~\cite{BandeiraKennedySinger},
and the square complex and square real monotonicity statements were proved in
the first arXiv versions of Papers~I and II, respectively~\cite{HutnikComplexOriginal,HutnikRealOriginal}.  The
natural question is what happens between the square case and the first
rectangular case.  We show that the answer is a genuine transition in the
continuous Laguerre shape parameter, not merely a distinction between zero
and one excess column.  We refer to this continuous change of direction
between the square and rectangular regimes as the rectangularity transition.

\subsection{Two endpoint regimes}
The two endpoint regimes already point in opposite directions.  At
$\lambda=0$ the real mean moves upward with the dimension, while once the
shape reaches one it moves downward.  We begin with these two statements.

\begin{theorem}[Square endpoint]\label{thm:square-real}
For every $N\ge1$,
\[
 \alpha_{\mathbb R}^{(0)}(N+1)-\alpha_{\mathbb R}^{(0)}(N)
 >\frac1{160N^2}.
\]
Equivalently,
\[
 \DReal_N(0)<-\frac1{160N^2}.
\]
Consequently, the square orthogonal mean increases strictly to $8/(3\pi)$.
\end{theorem}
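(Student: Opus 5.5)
We follow the route announced in the abstract: split the real half moment into its complex counterpart plus a positive orthogonal correction, and treat the two pieces separately.

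\emph{Step 1: the decomposition.} For $\beta=1$ with $N$ even, the one-point density of \eqref{eq:LOE-jpd} is a unitary Christoffel--Darboux kernel $K_{N-1}$ built from Laguerre functions. The extra piece is a rank-one term of the form $\phi_{N-1}(x)\,(\epsilon\phi_N)(x)$. For $N$ odd there is one more term coming from the constant function. The shape is shifted: the unitary part uses the parameter $\lambda-1$ or $\lambda$, depending on the convention for $x^{(\lambda-1)/2}$ versus $x^{\lambda}$. Integrating $\sqrt{x}$ against this density gives
\[
 N^{3/2}\alpha_{\mathbb R}^{(\lambda)}(N)=(N-\tfrac12)^{3/2}\,C_{\lambda'}(N')+R_\lambda(N),
\]
up to the exact index bookkeeping, where $R_\lambda(N)\ge0$ is the orthogonal correction. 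At $\lambda=0$ the relevant unitary shape is the square one, $C_0$, with possibly a half-integer shift absorbed into the normalization.

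\emph{Step 2: the complex term.} Paper~I supplies sharp two-sided asymptotics with explicit error bounds for $C_\lambda(N)$. Its decrement satisfies $\Gamma_{N,\lambda}=c_1N^{-2}+O(N^{-3})$, with an explicit constant; for the square case $C_0(N)$ is decreasing. After renormalizing by $N^{-3/2}$ and the index shift, the complex contribution to $\DReal_N(0)$ is therefore $aN^{-2}+O(N^{-3})$, with an explicit $a$ and an explicit error constant.

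\emph{Step 3: the correction.} Next we Abel-complete the correction and rewrite it as a series. Expanding $\epsilon\phi_N$ (an antiderivative) via the Laguerre connection formula $L_n^{(\alpha)}=\sum_k L_k^{(\alpha-1)}$ gives
\[
 R_0(N)=\sum_{k} w_k\,\langle \sqrt{x}\,\phi_k,\phi_k\rangle,
\]
a positive diagonal series. Each diagonal half moment $\langle\sqrt x\,\phi_k,\phi_k\rangle$ is an explicit ratio of Gamma functions, which can be bounded by Gautschi/Kershaw inequalities. The correction has size of order $N^{-1/2}$ in $N^{3/2}\alpha$, that is, $N^{-2}$ in $\alpha$. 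Its contribution to $N^{-3/2}R_0(N)-(N+1)^{-3/2}R_0(N+1)$ is $-bN^{-2}+O(N^{-3})$ with $b>0$, and the goal is to show that $b$ exceeds $a$.

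Combining the two pieces, I expect $\DReal_N(0)=-(b-a)N^{-2}+O(N^{-3})$ with $b-a$ comfortably above $1/160$. Explicit Gamma-ratio bounds then make the inequality rigorous for $N\ge N_0$. The finitely many $N<N_0$ are handled by exact evaluation: at $\lambda=0$, $\alpha_{\mathbb R}^{(0)}(N)$ is a finite combination of Gamma ratios, computable in closed form. The consequence that the mean increases strictly to $8/(3\pi)$ then follows from strict monotonicity together with the known limit.

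\emph{Main obstacle.} The hard step is the parity bookkeeping in Step 1 together with uniform explicit control of the differenced correction in Step 3. The correction's leading term and the complex term are of the same order $N^{-2}$, so everything hinges on comparing the constants $a$ and $b$ precisely. I would first compute both constants asymptotically to confirm the margin, and only then make the error terms explicit, treating even and odd $N$ separately.
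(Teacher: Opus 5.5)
Your outline has the paper's architecture, but the step that carries the proof is missing, and the structure you put in its place is not what your expansion produces. Expanding $\epsilon\phi_N$ and pairing it with the fixed function $\phi_{N-1}$ does not give diagonal half moments $\langle\sqrt x\,\phi_k,\phi_k\rangle$. It gives \emph{off-diagonal} mixed moments. In the paper's notation, $J_N^{(\lambda)}=-\sum_m w_{m,\epsilon_N}^{(\lambda)}\mathcal Q_\lambda(N-1,2m+1-\epsilon_N)$, where $\mathcal Q_\lambda(r,s)=\int_0^\infty x^{\lambda+1/2}\e^{-x}L_r^{(\lambda)}L_s^{(\lambda)}\dd x$ and $s-r=2u$ with $u\ge1$.

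These mixed moments have no evident sign, and proving that they are negative, in a usable quantitative form, is the key idea you are missing. The integer shift $L_n^{(\alpha)}=\sum_kL_k^{(\alpha-1)}$ cannot supply it. The factor $\sqrt x$ turns the weight into $x^{\lambda+1/2}\e^{-x}$, which is the orthogonality weight of $L^{(\lambda+1/2)}$. For the same reason, even $\langle\sqrt x\,\phi_k,\phi_k\rangle$ is a finite sum, not a single Gamma ratio.

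The paper instead uses the half shift $L_r^{(\lambda)}=\sum_j\sigma_jL_{r-j}^{(\lambda+1/2)}$, where $\sigma_j$ are the Taylor coefficients of $(1-z)^{1/2}$. This gives $\mathcal Q_\lambda(r,s)=\sum_j\sigma_{r-j}\sigma_{s-j}h_j^{(\lambda)}$. The Fourier identity $b_d-\sum_tb_tb_{t+d}=1/(\pi(d^2-1/4))$ then yields $-\mathcal Q_\lambda(r,r+d)=h_r^{(\lambda)}(\varkappa_d+\mathcal E_{r,d}^{(\lambda)})$, with $\mathcal E_{r,d}^{(\lambda)}\ge0$ nonincreasing in $r$.

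Positivity alone would still not prove the theorem; you need a lower bound for the one-step loss of the correction. The paper gets this termwise. For $N\ge3$ the coefficients $\mathcal A_{N,u}^{(0)}$ decrease in $N$ and the kernels $K_{r,u}^{(0)}$ decrease in $r$, so every diagonal contributes positively. The first diagonal alone gives $\Xi_N^{(0)}-\Xi_{N+1}^{(0)}>1/(8\pi N(N+1))$.

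Your signs and orders are also inconsistent, and the square unitary input is misidentified. Put $f(N):=N^{-3/2}R_0(N)$. Both $\alpha_{\mathbb R}^{(0)}(N)$ and your normalized complex term tend to $8/(3\pi)$, so $f(N)\to0$. Summing your relation $f(N)-f(N+1)=-bN^{-2}+O(N^{-3})$ from $N$ to $\infty$ then gives $f(N)=-b/N+O(N^{-2})$. This is negative and of order $N^{-1}$, contradicting both $R_0\ge0$ and your size claim $f=O(N^{-2})$. In the paper's same-shape decomposition $\alpha_{\mathbb R}^{(0)}(N)=C_0(N)-\Xi_N^{(0)}$, the correction is subtracted and is about $(2/\pi-1/2)/N$.

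On the complex side, at $\lambda=0$ there is no $N^{-2}$ term at all: $\Gamma_{N,0}=\frac{\log N}{8\pi N^3}+O(N^{-3})$. So ``$c_1N^{-2}+O(N^{-3})$'' fails there, and the whole $N^{-2}$ margin comes from the correction. What the proof needs is an explicit \emph{upper} bound on $\Gamma_{N,0}$. The paper takes $\Gamma_{N,0}<(\mathsf H_N+6\log2-10/3)/(8\pi[N(N+1)]^{3/2})$ from Abreu--Patil. The monotonicity of $C_0$ that you cite is the lower bound $\Gamma_{N,0}>0$, which points the wrong way.

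Finally, you never compute $a$, $b$ or $N_0$, so your finite check is not yet defined. In the paper the explicit bounds already work for $N\ge4$, and only $N=1,2,3$ are handled separately.
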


\begin{theorem}[The rectangular regime]\label{thm:main}
For every real $\lambda\ge1$ and every $N\ge1$,
\[
 \alpha_{\mathbb R}^{(\lambda)}(N+1)
 <\alpha_{\mathbb R}^{(\lambda)}(N).
\]
Consequently,
$\alpha_{\mathbb R}^{(\lambda)}(N)\searrow8/(3\pi)$ for every fixed
$\lambda\ge1$.
\end{theorem}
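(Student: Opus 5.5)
The proof follows the route announced in the abstract: split the real half moment into a complex part plus a correction, and show that the decrement of each part is positive for $\lambda\ge1$. The first step is to write the LOE one-point density through the standard $\beta=1$ Christoffel--Darboux structure, i.e.\ the Adler--Forrester--Nagao--van Moerbeke formula. This expresses the orthogonal kernel as the LUE kernel at a shifted parameter plus a rank-one (or rank-two) correction built from Laguerre functions and an antiderivative. Integrating $\sqrt{x}$ against this density gives an exact identity of the form
\[
 N^{3/2}\alpha_{\mathbb R}^{(\lambda)}(N)=N^{3/2}C_{\lambda'}(N')+R_\lambda(N),
\]
where $C$ is the unitary half moment of Paper~I at an appropriate shape and size (for the LOE, the LUE parameter is $\lambda-1$ at size $N-1$, or a nearby variant), and $R_\lambda(N)>0$ is the orthogonal correction. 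Normalizing by $N^{-3/2}$, the real decrement $\DReal_N(\lambda)$ becomes a combination of a unitary decrement $\Gamma_{N,\lambda'}$, some explicit normalization-mismatch terms, and the decrement of $N^{-3/2}R_\lambda(N)$.

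The second step handles the complex term. By the estimates of Paper~I, $\Gamma_{N,\mu}>0$ for all shapes $\mu\ge0$, and Paper~I gives a quantitative lower bound of order $N^{-2}$ with explicit constants. For $\lambda\ge1$ the shifted shape satisfies $\lambda'=\lambda-1\ge0$, so this bound applies uniformly. I would also use the Paper~I bounds to absorb the mismatch between $N^{-3/2}$ and the normalization natural to size $N-1$. This produces a positive main term $c/N^2$ together with an error that I would bound explicitly.

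The third step, which is the heart of the proof, treats the orthogonal correction. I would apply Abel summation to the Laguerre sum that defines $R_\lambda(N)$ and then use the connection formula $L_n^{(a)}=\sum_{k\le n}L_k^{(a-1)}$ (together with its half-integer-moment analogue). Combining the two, I expect to write $N^{-3/2}R_\lambda(N)$ as a series $\sum_k w_k(\lambda)\,d_k(N)$ with positive diagonal weights. The terms $d_k(N)$ would involve Gamma ratios like $\Gamma(k+\lambda/2+\cdot)/\Gamma(k+\cdot)$ and the half-moment integrals $\int_0^\infty x^{1/2}x^{a}e^{-x}L_kL_k\,dx$, which are explicit hypergeometric terms. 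Monotonicity then reduces to showing that each diagonal contribution, or its difference between $N$ and $N+1$, is either nonnegative or dominated by the unitary gain.

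The main obstacle is that at $\lambda=0$ the correction must beat the unitary gain, since the square mean increases. So this step cannot be crude: the proof has to show that the correction's decrement is $\ge -(\text{unitary gain})$ exactly when $\lambda\ge1$. First I would check that the derivative in $\lambda$ of the correction's decrement is positive termwise in the diagonal series. This is the monotonicity in $\lambda$ claimed in the abstract. With it, it suffices to prove the inequality at $\lambda=1$, where $\lambda'=0$ and the Gamma ratios collapse to elementary products. There I would verify the needed inequality directly: asymptotically via the constant $1-\pi/4<1$, and for small $N$ by an explicit check.
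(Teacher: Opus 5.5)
\paragraph{Main gap: the reduction to $\lambda=1$.}
You propose to show that the $\lambda$-derivative of the correction's decrement is termwise positive, and you identify this with the monotonicity in the abstract. That identification is wrong. The abstract's monotonicity concerns the full decrement $\DReal_N$ on $0\le\lambda\le1$ only. It is driven by the unitary reserve $\partial_\lambda\Gamma_{N,\lambda}>\frac{2}{\pi[N(N+1)]^{3/2}}\sum_{k\le N}\bigl(\frac{k}{k+\lambda}\bigr)^2$, not by the correction. In the paper's positive diagonal form $\Xi_N^{(\lambda)}=\sum_{u\ge1}\mathcal A_{N,u}^{(\lambda)}K_{N-1,u}^{(\lambda)}$, the differentiated loss has no termwise sign. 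On the strip, the pieces containing $\partial_\lambda\mathcal A$ are $\le0$, while $\sum_u(\mathcal A_{N,u}-\mathcal A_{N+1,u})\partial_\lambda K_{N-1,u}\ge0$. Indeed, to second order the loss $S_\varkappa N^{-2}-(4\pi)^{-1}N^{-3}\log N$ does not depend on $\lambda$ at all. And even granted on $[0,1]$, that result says nothing about $\lambda>1$, which is exactly where you need it.

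\paragraph{The missing idea.}
The paper avoids monotonicity of the difference and uses two separate monotone envelopes. The unitary gain is bounded below by $\mathcal R_{N,\lambda}=\frac{2\lambda}{\pi[N(N+1)]^{3/2}}\sum_{k\le N}\frac{k}{k+\lambda}$, which increases in $\lambda$. The orthogonal loss is bounded above, simultaneously for all $\lambda\ge1$, by a $\lambda$-free quantity. For this upper bound the paper uses three facts:
\begin{itemize}
\item $\mathcal A_{N,u}^{(\lambda)}$ decreases in $\lambda$ while $\mathcal A_{N+1,u}^{(\lambda)}/\mathcal A_{N,u}^{(\lambda)}$ increases, so $(\mathcal A_{N,u}^{(\lambda)}-\mathcal A_{N+1,u}^{(\lambda)})_+\le\mathcal A_{N,u}^{(1)}-\mathcal A_{N+1,u}^{(1)}<\mathcal A_{N,u}^{(1)}/(N+1)$;
\item $K_{N-1,u}^{(\lambda)}\le b_{2u}$;
\item the kernel-step bound via $\theta_{2u}<1/(4\pi u)$ and $\psi(c+\omega)-\psi(\omega)\le c/\omega$.
\end{itemize}
The true sign change sits near $1-\pi/4$, not at $1$: at $\lambda=1$ the decrement is about $1/(2N^2)$. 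So these crude envelopes already separate at $\lambda=1$ for every $N\ge2$, with explicit constants. The case $N=1$ is done directly via Jensen and $\Gamma(x+\tfrac12)/\Gamma(x)>\sqrt{x-1/4}$.

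\paragraph{Second gap: the unitary step does not fit your decomposition.}
With shape $\lambda-1$, the endpoint $\lambda=1$ is the square unitary case. There Paper~I's lower bound vanishes ($\mathcal R_{N,0}=0$), and $\Gamma_{N,0}$ is only of order $N^{-3}\log N$. With size $N-1$, the factor $((N-1)/N)^{3/2}$ adds roughly $-4/(\pi N^2)$ to the decrement. So the complex part gives no positive $c/N^2$ at $\lambda=1$, and the whole order-$N^{-2}$ margin would have to come from your $R_\lambda(N)$, which you do not control.

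The decomposition that works is the same-shape, same-size one, $\alpha_{\mathbb R}^{(\lambda)}(N)=C_\lambda(N)-\Xi_N^{(\lambda)}$ (Forrester (7.154), Livan--Vivo (16)). In it the positive correction is \emph{subtracted}, the gain is $\Gamma_{N,\lambda}\approx2\lambda/(\pi N^2)$, and the loss is $\approx(2/\pi-1/2)/N^2$.

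Your Abel-summation idea matches the paper's completion step. However, positivity of $\Xi_N^{(\lambda)}$ needs the half-integer connection $L_r^{(\lambda)}=\sum_j\sigma_jL_{r-j}^{(\lambda+1/2)}$, with $(1-z)^{1/2}=\sum_j\sigma_jz^j$. This moves $x^{1/2}$ into the orthogonality weight and, via $b_d-\sum_tb_tb_{t+d}=\varkappa_d$, gives $-\mathcal Q_\lambda(r,r+d)=h_r^{(\lambda)}(\varkappa_d+\mathcal E_{r,d}^{(\lambda)})>0$. The integer shift $L_n^{(a)}=\sum_{k\le n}L_k^{(a-1)}$ does not produce this sign structure.
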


For integer $\lambda$, Theorem~\ref{thm:main} is the usual rectangular
Gaussian statement.  The continuous-shape formulation shows more: for each
fixed dimension the change of direction must occur somewhere between $0$ and
$1$.  The numerical constant $1/160$ in
Theorem~\ref{thm:square-real} is not meant to be optimal.  Its role is simply
to make the square mechanism quantitative: a single positive diagonal of
the orthogonal correction already dominates the whole square unitary
decrement.

\subsection{The transition}
Put
\[
 \lambda_*:=1-\frac\pi4.
\]
The key finite-$N$ fact is that the decrement is strictly increasing throughout
the strip.

\begin{theorem}[Monotonicity in the transition strip]
\label{thm:global-parameter-monotonicity}
For every $N\ge1$,
\[
 \partial_\lambda\DReal_N(\lambda)>0,
 \qquad 0\le\lambda\le1.
\]
\end{theorem}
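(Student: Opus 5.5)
We plan to use the decomposition announced in the abstract: the real half moment splits exactly into its complex counterpart plus a positive orthogonal correction,
\[
 \alpha_{\mathbb R}^{(\lambda)}(N)=C_{\lambda'}(N)+R_N(\lambda),
\]
with an appropriate shift $\lambda'$ of the shape. The correction $R_N(\lambda)$ comes from the skew-orthogonal (Pfaffian) structure of the $\beta=1$ kernel: it is the diagonal of the rank-one-type correction $\epsilon$-term, integrated against $\sqrt x$. Then
\[
 \DReal_N(\lambda)=\Gamma_{N,\lambda'}+\bigl(R_N(\lambda)-R_{N+1}(\lambda)\bigr),
\]
and $\partial_\lambda\DReal_N$ splits into a unitary part $\partial_\lambda\Gamma_{N,\lambda'}$ and an orthogonal part.

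\textbf{Unitary part.} We take this from Paper~I. There the unitary decrement $\Gamma_{N,\lambda}$ has a representation as a positive series in the Laguerre coefficients. Differentiating in $\lambda$ term by term, each term is a ratio of Gamma functions times $\sqrt{\cdot}$-moment coefficients, and for $0\le\lambda\le1$ each is increasing in $\lambda$ by log-convexity of $\Gamma$. So $\partial_\lambda\Gamma_{N,\lambda'}\ge0$. If Paper~I only gives bounds rather than term-wise monotonicity, we will instead use its derivative estimates, which give a lower bound of order $N^{-2}$.

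\textbf{Orthogonal part.} This is the core of the argument. We apply the Abel completion and the Laguerre connection formula $L_n^{(a)}=\sum_k \binom{a-b+n-k-1}{n-k}L_k^{(b)}$ to write
\[
 R_N(\lambda)-R_{N+1}(\lambda)=\sum_{k\ge0} w_k(N,\lambda),
\]
with $w_k$ explicit products of Gamma ratios and the half-moment integrals $\int x^{1/2}L_j^{(\lambda)}L_k^{(\lambda)}x^{\lambda}\e^{-x}\dd x$. These integrals have closed forms via $\Gamma(j+\lambda+3/2)$-type hypergeometric sums, and the series is positive diagonal. We then differentiate each $w_k$ in $\lambda$ via the digamma function. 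The sign of $\partial_\lambda w_k$ reduces to inequalities of the form $\psi(a+\lambda)-\psi(b+\lambda)\gtrless$ explicit rationals, which we settle with the standard bounds $\log x-1/x<\psi(x)<\log x-1/(2x)$.

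\textbf{Main obstacle.} We expect the orthogonal derivative not to be term-wise positive, especially the leading diagonal near $\lambda=0$, where the correction drives the square increase of Theorem~\ref{thm:square-real}. If it fails, we would group the first diagonal $w_0$ with the unitary derivative. The aim is to show that $\partial_\lambda w_0$ plus $\partial_\lambda\Gamma$ is positive with margin $c/N^2$, and that the tail $\sum_{k\ge1}|\partial_\lambda w_k|$ is $O(N^{-5/2})$ uniformly on $[0,1]$. Small $N$ (say $N\le N_0$) left over by these asymptotic margins would be checked directly, since everything is then a finite explicit expression in Gamma functions of $\lambda$.
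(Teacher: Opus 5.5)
Your outline has the same architecture as the paper. You write $\partial_\lambda\DReal_N=\partial_\lambda\Gamma_{N,\lambda}-\mathcal X_N'(\lambda)$ with $\mathcal X_N=\Xi_N^{(\lambda)}-\Xi_{N+1}^{(\lambda)}$. Two small corrections: there is no shape shift, and the positive correction is subtracted, $\alpha_{\mathbb R}^{(\lambda)}(N)=C_\lambda(N)-\Xi_N^{(\lambda)}$. You then use Paper~I's explicit reserve $\partial_\lambda\Gamma_{N,\lambda}>\mathfrak v_N\sim 2/(\pi N^2)$; your first option, mere nonnegativity of $\partial_\lambda\Gamma$, would not suffice, since nothing forces $\mathcal X_N'\le 0$. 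Finally you control $\mathcal X_N'$ through the diagonal series.

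The gap is that control of $\mathcal X_N'$, which is the whole content of the theorem, and the route you sketch does not close for every $N$.
\begin{itemize}
\item An $O(N^{-5/2})$ tail bound with unspecified constants gives no explicit $N_0$, so ``check $N\le N_0$ directly'' cannot even be set up.
\item For fixed $N$ the orthogonal term is not a finite Gamma expression. It is an infinite diagonal series, equivalently an integral against the incomplete-gamma factor in $\Phi_{2,N}^{(\lambda)}$.
\item What must be verified is an inequality between functions on all of $0\le\lambda\le1$, not a numerical value.
\end{itemize}
Your diagnosis of where the difficulty sits is also off. The first diagonal is large in $\mathcal X_N$ (about $4/(15\pi N^2)$) only through the leading mass $S_\varkappa/N^2=(2/\pi-1/2)/N^2$, and that mass does not depend on $\lambda$. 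The shape dependence of every diagonal, the first included, appears only at order $N^{-3}$, so isolating $w_0$ with the unitary term buys nothing. Nor does the sign of $\partial_\lambda w_k$ reduce to single digamma inequalities: each kernel $K_{N-1,u}^{(\lambda)}=b_{2u}-\sum_t\mathfrak p_{N-1,t}^{(\lambda)}b_tb_{t+2u}$ is itself a $\lambda$-dependent sum over $t$ with weights $b_tb_{t+2u}$.

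The missing idea is a non-asymptotic bound on $\mathcal X_N'$ whose $N^{-2}$ constant lies below $2/\pi$ and which holds down to $N=3$. The paper gets it from sign structure rather than absolute values. With $\Xi_N=\sum_u\mathcal A_{N,u}K_{N-1,u}$, the product rule splits $\mathcal X_N'$ into four sums $T_1,\dots,T_4$.
\begin{itemize}
\item $T_1\le0$ and $T_3\le0$ with no estimation, because $\partial_\lambda(\mathcal A_{N,u}-\mathcal A_{N+1,u})<0$, $\partial_\lambda\mathcal A_{N+1,u}<0$, $K_{N-1,u}>0$ and $K_{N-1,u}\ge K_{N,u}$.
\item $T_2$ is summed in closed form using $0\le\partial_\lambda K_{r,u}\le\theta_{2u}/(r+1)$ with $\theta_d=\sum_t tb_tb_{t+d}=1/(\pi(2d+1))$, the gap bound $\mathcal A_{N,u}-\mathcal A_{N+1,u}<\mathcal A_{N,u}/(N+1)$, and $\sum_u\frac{(N/2)_u}{((N+\lambda+1)/2)_u}\frac1u\le\frac{N}{\lambda+1}$.
\item $T_4$ is summed using the exact formula for $\mathfrak p_{N,t}-\mathfrak p_{N-1,t}$ and $b_t\mathcal B_t\le1/(3\pi t^2)$.
\end{itemize}
Together these give $\mathcal X_N'<\frac{7}{24\pi N(N+1)}+\frac{3}{8\pi(N+1)^2}<\mathfrak v_N$ for every $N\ge3$. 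Only $N=1,2$ remain, and even those need dedicated estimates, for instance the closed form of $\Sigma_2$, rather than a direct evaluation.
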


The transition constant already appears at first order.  The unitary
decrement contributes $2\lambda/(\pi N^2)$, whereas the orthogonal correction
contributes the shape-independent mass $2/\pi-1/2$.  Their balance is exactly
$\lambda_*=1-\pi/4$.  Keeping the next term reveals how the finite-dimensional
thresholds approach this value.

\begin{theorem}[Two-term transition asymptotics]
\label{thm:transition-asymptotics}
For every fixed $\Lambda<\infty$, uniformly for $0\le\lambda\le\Lambda$,
\begin{align}
 \DReal_N(\lambda)
 &=\frac2\pi(\lambda-\lambda_*)\frac1{N^2}
 +\frac{3-4\lambda^2}{8\pi}\frac{\log N}{N^3}
 +O_\Lambda(N^{-3}),\label{eq:real-decrement-refined}\\
 \alpha_{\mathbb R}^{(\lambda)}(N)-\frac8{3\pi}
 &=\frac2\pi(\lambda-\lambda_*)\frac1N
 +\frac{3-4\lambda^2}{16\pi}\frac{\log N}{N^2}
 +O_\Lambda(N^{-2}).\label{eq:real-level-refined}
\end{align}
Consequently, for every fixed $\lambda\ne\lambda_*$, both signs eventually
agree with the sign of $\lambda-\lambda_*$.  At $\lambda=\lambda_*$ the
first-order terms vanish, but the logarithmic terms are positive.  Hence
$N\mapsto\alpha_{\mathbb R}^{(\lambda_*)}(N)$ is eventually above the
Marchenko--Pastur limit and decreases toward it.
\end{theorem}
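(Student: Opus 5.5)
The plan follows the decomposition announced in the abstract. We write
\[
 \alpha_{\mathbb R}^{(\lambda)}(N)=C_{\lambda'}(N')+R_N(\lambda),
\]
where the first term is the appropriate unitary half moment and $R_N(\lambda)\ge0$ is the orthogonal correction. We take this as the exact relation between LOE and LUE one-point densities (Forrester--Rains type interrelations), with the shape and size shifts as the paper fixes them. The decrement then splits correspondingly as
\[
 \DReal_N(\lambda)=\Gamma_{N,\lambda}+\bigl(R_N(\lambda)-R_{N+1}(\lambda)\bigr).
\]

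We treat the two pieces separately, uniformly for $0\le\lambda\le\Lambda$.

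For the unitary piece we invoke the Paper~I asymptotics, taken from the ``recent unitary estimates''. They give
\[
 C_\lambda(N)-\frac8{3\pi}=\frac{2\lambda}{\pi N}+c(\lambda)\frac{\log N}{N^2}+O(N^{-2}),
\]
together with the corresponding decrement expansion
\[
 \Gamma_{N,\lambda}=\frac{2\lambda}{\pi N^2}+2c(\lambda)\frac{\log N}{N^3}+O(N^{-3}).
\]
Here the coefficient is $c(\lambda)=-(4\lambda^2-1)/(16\pi)$ or similar; we read it off from the Paper~I expansion, which is uniform in bounded shape.

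For the orthogonal piece we use the Abel-completed positive diagonal series. After the Laguerre connection formula, $R_N(\lambda)$ becomes a sum over $k$ of explicit products of Gamma-function ratios. We expand each summand for large $N$ and sum using Euler--Maclaurin, or by recognizing a hypergeometric value at $1$. Leading order should give the shape-independent mass $(2/\pi-1/2)/N$.

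The next order is where the logarithm comes from. The diagonal terms decay like $k^{-1}$ times a factor cut off at $k\sim N$, and this produces $\log N/N^2$ with a coefficient we must compute exactly. The $\lambda$-dependence of that coefficient comes from the shape shift. The remaining terms are absorbed into $O(N^{-2})$.

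Differencing in $N$ is legitimate because the expansions are smooth in $N$; concretely, we expand $R_N-R_{N+1}$ directly from the series with explicit error control rather than differencing an $O$-term.

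Combining the pieces, the first-order coefficient is
\[
 \frac{2\lambda}{\pi}-\Bigl(\frac2\pi-\frac12\Bigr)=\frac2\pi\Bigl(\lambda-1+\frac\pi4\Bigr),
\]
which gives the stated main terms. The logarithmic coefficients add up to $(3-4\lambda^2)/(16\pi)$ for the level and twice that for the decrement.

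The consequences follow directly. For $\lambda\neq\lambda_*$, the first-order term dominates, so both signs eventually agree with the sign of $\lambda-\lambda_*$. At $\lambda_*$ we have $3-4\lambda_*^2>0$ because $\lambda_*<\sqrt3/2$. Hence both the level excess and the decrement are eventually positive, so the mean lies above the limit and decreases toward it.

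The main obstacle is the $\log N$ coefficient of the orthogonal correction: it requires uniform asymptotics of the diagonal series in the crossover regime $k\asymp N$. I would first try to write the summand as a Beta integral, interchange sum and integral, and then extract the logarithmic singularity at the endpoint.
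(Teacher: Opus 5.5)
Your outline has the right architecture: the unitary decrement from Paper~I, minus the one-step loss of the orthogonal correction. Your first-order constant, your unitary coefficient $(1-4\lambda^2)/(16\pi)$ and the sign discussion at $\lambda_*$ are correct. But the proposal never proves the statement that carries the theorem,
\[
 \Xi_N^{(\lambda)}-\Xi_{N+1}^{(\lambda)}=\frac{2/\pi-1/2}{N^2}-\frac1{4\pi}\frac{\log N}{N^3}+O_\Lambda(N^{-3}).
\]
The leading mass (``should give'') and the logarithmic coefficient (``we must compute exactly'') are both deferred. The assertion that the logarithmic coefficients ``add up'' to $(3-4\lambda^2)/(16\pi)$ is read off from the statement, not derived.

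There is also a sign slip. The correction is subtracted, $\alpha_{\mathbb R}^{(\lambda)}(N)=C_\lambda(N)-\Xi_N^{(\lambda)}$ with $\Xi_N^{(\lambda)}>0$, and there is no shift in $N$ or $\lambda$. With your $+R_N$, $R_N\ge0$, the constant would come out as $2\lambda/\pi+(2/\pi-1/2)$, contradicting your own final combination.

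More seriously, the mechanism you propose for the logarithm would give the wrong coefficient. In the diagonal representation $\Xi_N^{(\lambda)}=\sum_{u\ge1}\mathcal A_{N,u}^{(\lambda)}\bigl(\varkappa_{2u}+\mathcal E_{N-1,2u}^{(\lambda)}\bigr)$, only $\mathcal A_{N,u}^{(\lambda)}$ is a Gamma ratio. The kernel depends on $N$ through
\[
\mathcal E_{N-1,2u}^{(\lambda)}=\sum_{t\le N-1}\bigl(1-\mathfrak p_{N-1,t}^{(\lambda)}\bigr)b_tb_{t+2u}+\sum_{t\ge N}b_tb_{t+2u}.
\]
Your ``$k^{-1}$ profile cut off at $k\sim N$'' is only the coefficient part. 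There, the first-order term $-2(\lambda+1)u/N^3$ of the coefficient gap, summed against $\varkappa_{2u}\sim1/(4\pi u^2)$, gives $-\frac{\lambda+1}{2\pi}\frac{\log N}{N^3}$.

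The kernel contributes at the same order. Its total $\mathcal E$-mass is $\frac{\vartheta_\lambda}{4\pi}\frac{\log N}{N}$, coming from $1-\mathfrak p_{N-1,t}^{(\lambda)}\approx\vartheta_\lambda t/N$ and $b_t\mathcal B_t\sim1/(4\pi t^2)$. Its one-step change $\sum_u(K_{N-1,u}^{(\lambda)}-K_{N,u}^{(\lambda)})$ has mass $\frac{\vartheta_\lambda}{4\pi}\frac{\log N}{N^2}$. Weighted by the coefficient gap $\approx N^{-2}$ and by $\mathcal A_{N+1,u}^{(\lambda)}\approx N^{-1}$ respectively, each adds $\frac{\vartheta_\lambda}{4\pi}\frac{\log N}{N^3}$.

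Only the sum $-\frac{\lambda+1}{2\pi}+\frac{\vartheta_\lambda}{2\pi}=-\frac1{4\pi}$ is right, and it is independent of the shape. So your expectation that the orthogonal logarithm carries $\lambda$-dependence ``from the shape shift'' is mistaken. All the $\lambda$-dependence in $3-4\lambda^2$ comes from the unitary term $(1-4\lambda^2)/(8\pi)$.

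The leading mass likewise needs the connection-coefficient identity $b_d-\sum_t b_tb_{t+d}=\varkappa_d$ and then $\sum_u\varkappa_{2u}=2/\pi-1/2$ (Gregory--Leibniz). A hypergeometric evaluation of Gamma-ratio summands cannot see this kernel structure.

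Once the decrement expansion is proved, the level formula follows by telescoping $\sum_{j\ge N}\DReal_j(\lambda)$ from the common limit. You therefore do not need a separate unitary level expansion from Paper~I.
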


For each fixed $N$, the level $\lambda\mapsto
\alpha_{\mathbb R}^{(\lambda)}(N)$ is also strictly increasing.  The endpoint
theorems and Theorem~\ref{thm:global-parameter-monotonicity} therefore give
two natural crossing points.

\begin{corollary}[Finite-dimensional crossings]\label{cor:crossings}
For every $N\ge1$ there are unique
$\lambda_N^{\mathrm{dec}},\lambda_N^{\mathrm{lev}}\in(0,1)$ such that
\[
 \DReal_N(\lambda_N^{\mathrm{dec}})=0,
 \qquad
 \alpha_{\mathbb R}^{(\lambda_N^{\mathrm{lev}})}(N)=\frac8{3\pi}.
\]
The decrement zero is simple, and
\begin{align}
 \lambda_N^{\mathrm{dec}}
 &=\lambda_*
 -\frac{3-4\lambda_*^2}{16}\frac{\log N}{N}
 +O(N^{-1}),\label{eq:lambdaN-convergence}\\
 \lambda_N^{\mathrm{lev}}
 &=\lambda_*
 -\frac{3-4\lambda_*^2}{32}\frac{\log N}{N}
 +O(N^{-1}).\label{eq:lambdaN-level-convergence}
\end{align}
Thus, $\lambda_N^{\mathrm{dec}}<\lambda_N^{\mathrm{lev}}<\lambda_*$ for all sufficiently large $N$.  Between the
two crossings the mean is still below the Marchenko--Pastur limit, but the
next dimension step is already downward.
\end{corollary}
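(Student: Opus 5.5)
Existence and uniqueness of the decrement zero come first, and they are soft. For fixed $N$ the map $\lambda\mapsto\DReal_N(\lambda)$ is continuous on $[0,1]$; I take this from the smoothness of the Laguerre weight in the shape parameter, since the half moment is a finite combination of Gamma-function expressions in $\lambda$. Theorem~\ref{thm:square-real} gives $\DReal_N(0)<0$, and Theorem~\ref{thm:main} at $\lambda=1$ gives $\DReal_N(1)>0$. Theorem~\ref{thm:global-parameter-monotonicity} gives $\partial_\lambda\DReal_N>0$ on $[0,1]$. Together these yield a unique zero $\lambda_N^{\mathrm{dec}}\in(0,1)$, and the zero is simple because the derivative there is strictly positive.

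The level crossing uses the same pattern, with three inputs. First, the stated strict monotonicity of $\lambda\mapsto\alpha_{\mathbb R}^{(\lambda)}(N)$. Second, $\alpha^{(0)}_{\mathbb R}(N)<8/(3\pi)$, which follows because the square mean increases strictly to its limit. Third, $\alpha^{(1)}_{\mathbb R}(N)>8/(3\pi)$, which follows because by Theorem~\ref{thm:main} the $\lambda=1$ mean decreases strictly to the same limit. A strictly monotone sequence cannot equal its limit, so both endpoint inequalities are strict. Continuity then gives a unique $\lambda_N^{\mathrm{lev}}\in(0,1)$.

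For the asymptotics I would invert the expansions of Theorem~\ref{thm:transition-asymptotics} with $\Lambda=1$. Set $\lambda=\lambda_N^{\mathrm{dec}}$ in \eqref{eq:real-decrement-refined} and multiply by $N^2$. This gives
\[
0=\tfrac2\pi(\lambda_N^{\mathrm{dec}}-\lambda_*)+\tfrac{3-4(\lambda_N^{\mathrm{dec}})^2}{8\pi}\tfrac{\log N}{N}+O(N^{-1}),
\]
uniformly, because $\lambda_N^{\mathrm{dec}}\in[0,1]$. The coefficient $3-4\lambda^2$ is bounded on $[0,1]$, so $\lambda_N^{\mathrm{dec}}-\lambda_*=O(\log N/N)$. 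Next replace $3-4(\lambda_N^{\mathrm{dec}})^2$ by $3-4\lambda_*^2$. The error is $O(|\lambda_N^{\mathrm{dec}}-\lambda_*|)\cdot\log N/N=O(\log^2N/N^2)$, which is absorbed into $O(N^{-1})$. Solving the resulting equation gives \eqref{eq:lambdaN-convergence}. The same argument applied to \eqref{eq:real-level-refined}, multiplied by $N$, gives \eqref{eq:lambdaN-level-convergence}, with the coefficient halved to $1/32$.

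For the final ordering, note that $3-4\lambda_*^2>0$ because $\lambda_*\approx0.215$. Hence $\lambda_*-\lambda_N^{\mathrm{lev}}\sim c\log N/N$ and $\lambda_*-\lambda_N^{\mathrm{dec}}\sim 2c\log N/N$ with $c>0$. Their difference is $c\log N/N+O(N^{-1})>0$ for large $N$, which gives $\lambda_N^{\mathrm{dec}}<\lambda_N^{\mathrm{lev}}<\lambda_*$.

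The interpretive sentence follows from the two monotonicities. For $\lambda\in(\lambda_N^{\mathrm{dec}},\lambda_N^{\mathrm{lev}})$, strict increase of $\DReal_N$ gives $\DReal_N(\lambda)>0$, and strict increase of the level gives $\alpha^{(\lambda)}_{\mathbb R}(N)<8/(3\pi)$.

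The main point needing care is uniformity. The error terms of Theorem~\ref{thm:transition-asymptotics} must be uniform in $\lambda\in[0,1]$, because they are evaluated at the moving points $\lambda_N$; the theorem provides exactly this. The bootstrap step, from the a priori bound $O(\log N/N)$ to the refined expansion, is the only slightly delicate manipulation, and it is routine once uniformity is in place.
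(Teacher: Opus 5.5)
Your proposal is correct and follows the paper's proof essentially step for step: endpoint signs plus $\partial_\lambda\DReal_N>0$ for the decrement zero, shape monotonicity plus strict monotone convergence at $\lambda=0$ and $\lambda=1$ for the level crossing, and inversion of the uniform expansions of Theorem~\ref{thm:transition-asymptotics} with the same bootstrap from $O((\log N)/N)$ to the refined root. One small correction concerns your continuity justification. The real half moment is not a finite Gamma-combination as written, since the orthogonal correction $\Xi_N^{(\lambda)}$ is an infinite diagonal series. You do not need that claim, however, because differentiability in $\lambda$ is already supplied by Proposition~\ref{prop:real-shape-basics} and is presupposed by Theorem~\ref{thm:global-parameter-monotonicity}.
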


Therefore, the ordering of the two crossings is the reverse of the convex unitary
transition in Paper~I: here the intermediate regime lies below the limiting
level while already moving downward.

\subsection{Idea of the proof and relation to earlier work}
The reason the two endpoint regimes can be treated by one argument is the
exact decomposition
\begin{equation}\label{eq:intro-decomposition}
 \alpha_{\mathbb R}^{(\lambda)}(N)
 =C_\lambda(N)-\Xi_N^{(\lambda)},
 \qquad \Xi_N^{(\lambda)}>0.
\end{equation}
The complex term is supplied by recent unitary work.  At square shape we use
the sharp non-asymptotic estimate of Abreu--Patil~\cite{AbreuPatil}; the
shape-dependent estimates come from Paper~I.  Those
estimates rest on the Laguerre moment recurrence of Cunden et
al.~\cite{CundenEtAl} and are closely related to the recurrence approach of
Baslingker--Dan~\cite{BaslingkerDan}.  What remains is the genuinely
orthogonal term $\Xi_N^{(\lambda)}$.  After an Abel completion and a Laguerre
connection formula, it becomes a positive diagonal series.  The square case, the half-line $\lambda\ge1$,
and the strip $0\le\lambda\le1$ are then three different comparisons of
the same object, rather than three unrelated proofs.

The square endpoint was first proved in the first arXiv version of the present
work~\cite{HutnikRealOriginal}, with the weaker explicit bound $1/(1000N^2)$.
The present paper grew out of that argument, but the proof has since changed
substantially.  Abreu--Patil now supply the square unitary bound, while the
shape-dependent unitary bounds come from Paper~I.  More importantly, the square
theorem is only the left endpoint of the
continuous-shape theory developed here.  The main extension is the transition
itself: both finite-$N$ crossings are unique, and their logarithmic
displacement from $\lambda_*$ can be computed.

This order-one sign transition is different from the shrinking-shape balance
at the half moment in Paper~I.  There the square correction is logarithmic, and
a shape of order $(\log N)/N$ is already large enough to affect the leading
finite-size correction.  In the orthogonal problem the extra correction is
already of order $N^{-1}$ at the level and $N^{-2}$ at the decrement, so the
balance takes place at an order-one shape.

\subsection{Organization}
Section~\ref{sec:orthogonal-correction} derives the real--complex identity and
the positive diagonal representation of the orthogonal correction.
Section~\ref{sec:finite-comparisons} proves the square and $\lambda\ge1$
monotonicity results and the monotonicity of the decrement on
$0\le\lambda\le1$.  Section~\ref{sec:asymptotics} identifies the two-term transition law and the
two finite-dimensional crossings.

\section{The orthogonal correction}\label{sec:orthogonal-correction}

The decomposition \eqref{eq:intro-decomposition} shifts the problem from the
real ensemble to the correction $\Xi_N^{(\lambda)}$.  The aim of this
section is to expose the sign structure of that correction.  We begin with
two basic facts about the continuous shape parameter, then write the
real--complex identity in the normalization used here, and finally convert
the orthogonal term into a positive diagonal series.

\subsection{Two basic facts about the shape parameter}

Two properties of the continuous Laguerre law will be used later to identify
the level crossing.  They are independent of the real--complex correction
formula, so it is useful to settle them first.

\begin{proposition}[Shape monotonicity and limiting level]
\label{prop:real-shape-basics}
For every $N\ge1$, the map
$\lambda\mapsto\alpha_{\mathbb R}^{(\lambda)}(N)$ is strictly increasing on
$[0,\infty)$.  Moreover,
\[
 \alpha_{\mathbb R}^{(\lambda)}(N)\longrightarrow\frac8{3\pi}
\]
as $N\to\infty$, locally uniformly for $\lambda\ge0$.
\end{proposition}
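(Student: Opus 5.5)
For the shape monotonicity I would use a stochastic-domination argument based on the Dumitriu--Edelman tridiagonal model~\cite{DumitriuEdelman}. The LOE with density \eqref{eq:LOE-jpd} at $\beta=1$ and real shape $\lambda$ is the eigenvalue law of $B B^T$. Here $B$ is the $N\times N$ lower bidiagonal matrix with independent entries: the diagonal entries are $\chi_{\lambda+N-i+1}$, $i=1,\dots,N$, and the subdiagonal entries are $\chi_{N-i}$, $i=1,\dots,N-1$. This holds for every real $\lambda\ge0$ because chi variables with real degrees of freedom make sense.

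The quantity $\sum_j\sqrt{x_j}$ is the nuclear norm $\|B\|_*$, a convex function of the entries of $B$. I would rather use monotonicity in absolute values of the entries. The singular values of a bidiagonal matrix depend only on the absolute values of its entries, since one can conjugate by diagonal sign matrices. Increasing one entry in absolute value does not decrease any singular value: $BB^T$ depends on the entries through $B$, and the map $t\mapsto\|B(t)\|_*$ is convex and even in that entry, hence nondecreasing in $|t|$. Next I would couple $\chi_a$ and $\chi_{a'}$ with $a<a'$ monotonically, for instance through quantile coupling, which works because the chi laws are stochastically ordered in the degrees of freedom. Under this coupling $\|B_\lambda\|_*\le\|B_{\lambda'}\|_*$ almost surely for $\lambda<\lambda'$.

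For strictness I would look at the diagonal entry in row $N$, whose degree of freedom is $\lambda+1$. The nuclear norm is strictly increasing in that entry's absolute value on a set of positive probability. This holds, for example, because $\partial\|B\|_*/\partial b_{NN}=(UV^T)_{NN}$, which is nonzero almost surely for an invertible $B$; strict convexity along the line $t\mapsto\|B(t)\|_*$ would also suffice. The coupled entries differ strictly with probability one, so $\alpha_{\mathbb R}^{(\lambda)}(N)<\alpha_{\mathbb R}^{(\lambda')}(N)$.

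For the limit I would use that the empirical spectral distribution of $x/N$ converges weakly almost surely to the Marchenko--Pastur law with ratio $1$ when $\lambda/N\to0$. The limit $\int\sqrt{x}\,\dd\MP(x)=8/(3\pi)$ is the standard computation already quoted in the introduction. To pass to expectations I need uniform integrability of $N^{-1}\sum_j\sqrt{x_j/N}$. Cauchy--Schwarz bounds this by $(N^{-2}\sum_j x_j)^{1/2}$, and $\E\sum_j x_j=\E\operatorname{tr}BB^T=N(N+\lambda)$, with second moments controlled similarly. This gives bounded $L^2$ norms uniformly for $\lambda$ in compacts.

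Local uniformity in $\lambda$ follows from the monotonicity already proved. On $[0,\Lambda]$ we have $\alpha^{(0)}_{\mathbb R}(N)\le\alpha^{(\lambda)}_{\mathbb R}(N)\le\alpha^{(\Lambda)}_{\mathbb R}(N)$, and both endpoints converge to $8/(3\pi)$. So pointwise convergence at fixed shapes suffices, which is a sandwich argument.

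The main obstacle I expect is the strictness step. It requires a clean argument that the nuclear norm of a random bidiagonal matrix strictly increases in one entry with positive probability. The non-strict monotonicity itself rests on the elementary convexity/evenness argument, which I would check carefully.
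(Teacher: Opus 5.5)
Your route is genuinely different from the paper's, and it works in outline.

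\textbf{How the two routes differ.} The paper stays with the joint eigenvalue density. It differentiates under the integral to get $\partial_\lambda\alpha_{\mathbb R}^{(\lambda)}(N)=\frac1{2N^{3/2}}\operatorname{Cov}_\lambda(H,\ell)$, with $H=\sum_i\sqrt{x_i}$ and $\ell=\sum_i\log x_i$. It then checks that \eqref{eq:LOE-jpd} is MTP$_2$ on the ordered chamber and applies the Karlin--Rinott association inequality. Strictness comes from the diagonal terms $\operatorname{Cov}_\lambda(\sqrt{X_i},\log X_i)>0$.

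You instead realize every real shape through the Dumitriu--Edelman bidiagonal matrix. Your diagonal entries $\chi_{N+\lambda},\dots,\chi_{\lambda+1}$ and $\lambda$-free subdiagonal $\chi_{N-1},\dots,\chi_1$ are the correct $\beta=1$ specialization. You couple the diagonal entries through quantiles and use that the nuclear norm is convex and even in each entry.

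Your route gives more than is needed: an almost-sure ordering $\|B_\lambda\|_*\le\|B_{\lambda'}\|_*$, i.e.\ stochastic monotonicity of the nuclear norm itself. It also avoids the differentiation and the truncation inside the association theorem. The paper's argument, in exchange, needs no matrix model and gives monotonicity of $\E F$ for every coordinatewise increasing functional $F$ of the ordered eigenvalues.

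\textbf{The strictness step is not proved.} Nothing in your sketch shows $(UV^T)_{NN}\ne0$. Your choice of entry makes this easy to fix, though.
\begin{itemize}
\item Column $N$ of the lower bidiagonal $B$ is $b_{NN}e_N$, so $BB^T=M+b_{NN}^2e_Ne_N^T$ with $M$ independent of $b_{NN}$.
\item Raising $b_{NN}$ from $s$ to $s'>s\ge0$ is therefore a positive semidefinite rank-one update. By Weyl, each eigenvalue $x_j$ is nondecreasing, while the trace increases by $s'^2-s^2>0$.
\item Hence some $x_j$ increases strictly, and so does $\sum_j\sqrt{x_j}$, whatever the other entries are.
\item Since $\chi^2_{\lambda'+1}\overset{d}{=}\chi^2_{\lambda+1}+\chi^2_{\lambda'-\lambda}$ with independent summands, the distribution functions satisfy $F_{\lambda'+1}<F_{\lambda+1}$ on $(0,\infty)$. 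So the quantile-coupled last entries differ for every $U\in(0,1)$, and strictness follows.
\end{itemize}

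\textbf{A false side claim.} Drop the sentence saying that increasing one entry in absolute value does not decrease any singular value. It is false for subdiagonal entries: for $\begin{pmatrix}a&0\\c&d\end{pmatrix}$ one has $\sigma_1\sigma_2=|ad|$ and $\sigma_1^2+\sigma_2^2=a^2+c^2+d^2$, so $\sigma_2$ decreases as $|c|$ grows. It does no harm here, because only the nuclear-norm statement is used and only diagonal entries move under your coupling.

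\textbf{The limit.} Your sandwich is the paper's, with two adjustments.
\begin{itemize}
\item Take the right endpoint to be the integer $\lceil\Lambda\rceil$. Then only the classical Marchenko--Pastur theorem for real Wishart matrices is needed, not its extension to real $\beta$-Laguerre shapes.
\item Weak convergence of $\mu_N$ does not by itself give $\int\sqrt x\,\dd\mu_N\to8/(3\pi)$, because $\sqrt x$ is unbounded. The paper closes this with $0\le\sqrt x-(\sqrt x\wedge\sqrt M)\le M^{-1/2}x$ and $\E\int x\,\dd\mu_N=(N+\lambda)/N$. That is the same trace identity you already use, and it makes your $L^2$ bound unnecessary.
\end{itemize}
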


\begin{proof}
Let $f_{N,\lambda}$ be the normalized density in \eqref{eq:LOE-jpd}, extended
by zero outside $\mathcal W_N$.  In this proof,
$\operatorname{Cov}_\lambda$ denotes covariance with respect to this law.
Write
\[
 H(x)=\sum_{i=1}^N\sqrt{x_i},\qquad
 \ell(x)=\sum_{i=1}^N\log x_i.
\]
Differentiating the normalized integral is justified locally uniformly in
$\lambda\ge0$ by the hard-edge and exponential-tail integrability, and gives
\[
 \partial_\lambda\alpha_{\mathbb R}^{(\lambda)}(N)
 =\frac{1}{2N^{3/2}}\operatorname{Cov}_\lambda(H,\ell).
\]
The density $f_{N,\lambda}$ is MTP$_2$.  Indeed, the chamber $\mathcal W_N$
is a sublattice, the one-variable factors in \eqref{eq:LOE-jpd} are modular,
and for a Vandermonde factor $h(u,v)=v-u$ the only nontrivial ordering reduces,
after interchanging the two points if necessary, to
$x_i\le y_i<y_j\le x_j$, where
\[
 (y_j-x_i)(x_j-y_i)-(x_j-x_i)(y_j-y_i)
 =(y_i-x_i)(x_j-y_j)\ge0.
\]
The association theorem of Karlin--Rinott~\cite{KarlinRinott}, with truncation
for the unbounded functions, therefore gives
$\operatorname{Cov}_\lambda(\sqrt{x_i},\log x_j)\ge0$ for every $i,j$.
The diagonal terms are strictly positive.  If $X_i'$ is an independent copy
of the nondegenerate coordinate $X_i$, then
\[
 2\operatorname{Cov}_\lambda(\sqrt{X_i},\log X_i)
 =\E\!\left[(\sqrt{X_i}-\sqrt{X_i'})(\log X_i-\log X_i')\right]>0.
\]
Thus $\operatorname{Cov}_\lambda(H,\ell)>0$, proving strict shape
monotonicity.

For the limiting level, begin with an integer $m\ge0$ and let $\mu_N$ be the
empirical eigenvalue measure of
$N^{-1}X_{N,m}^{\mathbb R}(X_{N,m}^{\mathbb R})^T$.  Since
$(N+m)/N\to1$, the Marchenko--Pastur theorem~\cite{MarchenkoPastur} gives
$\mu_N\Rightarrow\mu_{\MP}$ almost surely.  The square-root moment passes to
the limit by truncation, because
\[
 0\le \sqrt x-(\sqrt x\wedge\sqrt M)\le M^{-1/2}x,
 \qquad
 \E\left[\int x\,\dd\mu_N(x)\right]=\frac{N+m}{N}.
\]
It follows that
\[
 \alpha_{\mathbb R}^{(m)}(N)\longrightarrow
 \int_0^4\sqrt x\,\mu_{\MP}(\dd x)
 =\frac1{2\pi}\int_0^4\sqrt{4-x}\,\dd x
 =\frac8{3\pi}.
\]
For a bounded interval $0\le\lambda\le\Lambda$, choose
$M=\lceil\Lambda\rceil$.  The monotonicity already proved gives
\[
 \alpha_{\mathbb R}^{(0)}(N)
 \le \alpha_{\mathbb R}^{(\lambda)}(N)
 \le \alpha_{\mathbb R}^{(M)}(N).
\]
Both endpoints tend to $8/(3\pi)$, proving uniform convergence on bounded
shape intervals.
\end{proof}

\subsection{Separating the real and complex parts}

The next step is to isolate the orthogonal contribution.  The underlying
real--complex identity is standard, but its factors of two depend on the
normalization, so we record the rescaling explicitly.  Let
$p_{N,\lambda}^{\mathbb R}$ and $p_{N,\lambda}^{\mathbb C}$ be the normalized
one-point densities of the continuous-shape $\beta=1$ and $\beta=2$
Laguerre ensembles in the normalizations of Section~\ref{sec:intro}.  With
this convention,
\[
 \alpha_{\mathbb R}^{(\lambda)}(N)
 =\frac1{\sqrt N}\int_0^\infty x^{1/2}
 p_{N,\lambda}^{\mathbb R}(x)\dd x,
 \qquad
 C_\lambda(N)
 =\frac1{\sqrt N}\int_0^\infty x^{1/2}
 p_{N,\lambda}^{\mathbb C}(x)\dd x.
\]
Let $\epsilon_N=N\bmod2$, and define
\small{\begin{align}
 w_{m,\epsilon_N}^{(\lambda)}
 &:=\frac{2^{1-\lambda}\Gamma(m+1-\epsilon_N/2)}
 {\Gamma(m+(\lambda+3)/2-\epsilon_N/2)},
 \qquad m\ge0,\notag\\
 \Phi_{1,N}^{(\lambda)}(x)
 &:=x^\lambda\e^{-x}
 \sum_{m=0}^{(N+\epsilon_N-2)/2}
 w_{m,\epsilon_N}^{(\lambda)}
 L_{2m+1-\epsilon_N}^{(\lambda)}(x),\label{eq:Phi1}\\
 \Phi_{2,N}^{(\lambda)}(x)
 &:={\left(x/2\right)^{(\lambda-1)/2}\e^{-x/2}}
 \left[
 (1-\epsilon_N)\frac{2\Gamma((\lambda+1)/2,x/2)}
 {\Gamma((\lambda+1)/2)}+2\epsilon_N-1
 \right].\label{eq:Phi2}
\end{align}}\normalsize
The finite-$N$ LOE one-point density with a continuous Laguerre exponent is
standard; see, for example, \cite[Eq.~(7.154)]{Forrester}.  We use the
equivalent form of Livan--Vivo~\cite[Eq.~(16)]{LivanVivo}, whose parameter
$\nu$ appears in the $\beta=1$ weight with exponent $(\nu-1)/2$
\cite[Eq.~(1)]{LivanVivo}.  Taking $\nu=\lambda$ therefore reproduces
\eqref{eq:LOE-jpd}.  This is a continuous Laguerre shape parameter, not a
continuation of the matrix dimension: integer $\lambda$ recovers the real
Wishart model, while the Dumitriu--Edelman $\beta$-Laguerre model realizes
all real $\lambda\ge0$~\cite{DumitriuEdelman}.

\begin{proposition}[Continuous-shape orthogonal correction]\label{prop:general-correction}
For every $N\ge1$ and every real $\lambda\ge0$,
\begin{equation}\label{eq:density-general}
 p_{N,\lambda}^{\mathbb R}(x)
 =p_{N,\lambda}^{\mathbb C}(x)
 -\frac1{2N}
 \frac{\Gamma((N+1)/2)}{\Gamma((N+\lambda)/2)}
 L_{N-1}^{(\lambda)}(x)
 \left\{\Phi_{1,N}^{(\lambda)}(x)
 -\Phi_{2,N}^{(\lambda)}(x)\right\}.
\end{equation}
Consequently,
\begin{equation}\label{eq:alpha-general-correction}
 \alpha_{\mathbb R}^{(\lambda)}(N)
 =C_\lambda(N)-\Xi_N^{(\lambda)},
 \qquad
 \Xi_N^{(\lambda)}
 :=\frac{1}{2N^{3/2}}
 \frac{\Gamma((N+1)/2)}{\Gamma((N+\lambda)/2)}J_N^{(\lambda)},
\end{equation}
where
\begin{equation}\label{eq:J-general}
 J_N^{(\lambda)}
 :={\int_0^\infty x^{1/2}L_{N-1}^{(\lambda)}(x)
 \left\{\Phi_{1,N}^{(\lambda)}(x)
 -\Phi_{2,N}^{(\lambda)}(x)\right\}\dd x}.
\end{equation}
\end{proposition}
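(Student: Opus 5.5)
The plan is to take the known finite-$N$ LOE one-point density in the form of Livan--Vivo \cite[Eq.~(16)]{LivanVivo} (equivalently \cite[Eq.~(7.154)]{Forrester}) and rewrite it in the normalization of \eqref{eq:LOE-jpd}. First I will check that Livan--Vivo's parameter $\nu$ enters the $\beta=1$ weight as $x^{(\nu-1)/2}\e^{-x/2}$. Setting $\nu=\lambda$ then matches \eqref{eq:LOE-jpd} exactly, with no rescaling of $x$. Their formula splits the density into the $\beta=2$ Laguerre one-point density with weight $x^\lambda\e^{-x}$ plus a rank-one-type correction. The $\beta=2$ part is the Christoffel--Darboux sum $\frac1N\sum_{k<N}\frac{k!}{\Gamma(k+\lambda+1)}x^\lambda\e^{-x}(L_k^{(\lambda)})^2$. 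Hence I must confirm that the $\beta=2$ normalization of Section~\ref{sec:intro} (Paper~I's $C_{1/2,\lambda}$) uses this same weight, so that the first term is literally $p^{\mathbb C}_{N,\lambda}$.

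The correction term involves three ingredients. The first is $L_{N-1}^{(\lambda)}$, multiplied by an antiderivative-type sum of odd- or even-index Laguerre polynomials. The second is an incomplete gamma function coming from integrating $y^{(\lambda-1)/2}\e^{-y/2}$. The third, for even or odd $N$, is the extra parity term. I will match these term by term with \eqref{eq:Phi1} and \eqref{eq:Phi2}. Concretely, I will track the prefactors $2^{1-\lambda}\Gamma(m+1-\epsilon_N/2)/\Gamma(m+(\lambda+3)/2-\epsilon_N/2)$ and $\Gamma((N+1)/2)/\Gamma((N+\lambda)/2)$, and the overall factor $1/(2N)$ that converts the unnormalized density (which integrates to $N$) into a probability density. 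For the parity split I will treat $\epsilon_N=0$ and $\epsilon_N=1$ separately. For odd $N$, the bracket in \eqref{eq:Phi2} reduces to $1$, and the sum runs over even indices $2m$ up to $N-1$.

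As a consistency check, I will verify that the correction integrates to zero, since both densities are normalized. I will also test $N=1$ directly: there $p^{\mathbb R}_{1,\lambda}$ is the $\chi^2_{\lambda+1}$ density, and the formula must reproduce it. This check pins down any stray factors of $2$ or $2^{\lambda}$.

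Once \eqref{eq:density-general} is established, the consequence \eqref{eq:alpha-general-correction} follows by multiplying by $N^{-1/2}x^{1/2}$ and integrating. Each term is integrable near $0$ and at infinity. Near $0$, $\Phi_1$ behaves like $x^{\lambda}$ and $\Phi_2$ like $x^{(\lambda-1)/2}$, both integrable against $x^{1/2}$ for $\lambda\ge0$. At infinity, all terms carry a factor $\e^{-x/2}$. Linearity then gives $\Xi_N^{(\lambda)}=\frac{1}{2N^{3/2}}\frac{\Gamma((N+1)/2)}{\Gamma((N+\lambda)/2)}J_N^{(\lambda)}$.

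The main obstacle is the bookkeeping of constants and parity when translating the literature formula to the present normalization. I expect the incomplete-gamma term and the power of $2$ to be where errors hide, which is why I rely on the $N=1$ and $N=2$ explicit checks.
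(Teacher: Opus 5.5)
Your route is the paper's: quote the Livan--Vivo one-point formula \cite[Eq.~(16)]{LivanVivo}, set $\nu=\lambda$, match normalizations, and integrate against $N^{-1/2}x^{1/2}$. However, the matching step, which is the whole content of this proposition, is described incorrectly. The formula you are transcribing has the form
\[
 R^{\mathrm{LV}}_{N,1,\nu}(y)=2R^{\mathrm{LV}}_{N,2,\nu}(2y)-\frac{\Gamma((N+1)/2)}{\Gamma((N+\nu)/2)}L_{N-1}^{(\nu)}(2y)\{\phi_1(y)-\phi_2(y)\},
\]
with $R^{\mathrm{LV}}$ of total mass $N$. Its $\beta=1$ variable $y$ is half of the $x$ in \eqref{eq:LOE-jpd}, so a rescaling is needed, contrary to your claim. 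The correct bookkeeping is $p^{\mathbb R}_{N,\lambda}(x)=R^{\mathrm{LV}}_{N,1,\lambda}(x/2)/(2N)$. Here $1/N$ is the normalization and $1/2$ is the Jacobian of $x=2y$, while $R^{\mathrm{LV}}_{N,2,\lambda}(x)/N=p^{\mathbb C}_{N,\lambda}(x)$.

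After substituting $y=x/2$, the Jacobian cancels the factor $2$ in front of $R^{\mathrm{LV}}_{N,2}$, which makes the first term exactly $p^{\mathbb C}_{N,\lambda}(x)$. The Jacobian survives in the correction term, and that is the origin of the $1/(2N)$ in \eqref{eq:density-general}. The substitution also sends $L_{N-1}^{(\lambda)}(2y)$ to $L_{N-1}^{(\lambda)}(x)$ and $\phi_i(x/2)$ to $\Phi_{i,N}^{(\lambda)}(x)$, which is why \eqref{eq:Phi2} is written in terms of $x/2$.

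Your explanation of $1/(2N)$ as the factor turning a mass-$N$ function into a probability density is arithmetically wrong: that factor is $1/N$. With no rescaling, the first term would come out as $2p^{\mathbb C}_{N,\lambda}(2x)$, not $p^{\mathbb C}_{N,\lambda}(x)$. Checking that the weight carries the power $(\nu-1)/2$ cannot detect this, because a dilation of the variable changes only the exponential scale.

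Your planned $N=1$ check is the right safeguard and would expose the error. It also confirms the corrected bookkeeping. There $\Phi_{1,1}^{(\lambda)}=2^{1-\lambda}\sqrt\pi\,x^\lambda\e^{-x}/\Gamma(\lambda/2+1)$ and $\Phi_{2,1}^{(\lambda)}=(x/2)^{(\lambda-1)/2}\e^{-x/2}$. By the duplication formula, the $x^\lambda\e^{-x}$ part of the correction cancels $p^{\mathbb C}_{1,\lambda}$ exactly. What remains is $x^{(\lambda-1)/2}\e^{-x/2}/(2^{(\lambda+1)/2}\Gamma((\lambda+1)/2))$, the $\chi^2_{\lambda+1}$ density. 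Once the change of variables is done this way, the rest of your plan is fine: the parity cases, the integrability at $0$ and $\infty$, and linearity yielding \eqref{eq:alpha-general-correction}--\eqref{eq:J-general}.
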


\begin{proof}
We spell out the normalization because this is the only point in the proof
where the two Laguerre scalings have to be matched.  Write the standard
finite-$N$ LOE one-point formula cited above in the Livan--Vivo normalization.  Their unnormalized function
$R^{\mathrm{LV}}_{N,1,\nu}$ has integral $N$, their Laguerre variable is
$y$, and the corresponding expression \cite[Eq.~(16)]{LivanVivo} reads
\[
 R^{\mathrm{LV}}_{N,1,\nu}(y)=2R^{\mathrm{LV}}_{N,2,\nu}(2y)
 -\frac{\Gamma((N+1)/2)}{\Gamma((N+\nu)/2)}
 L_{N-1}^{(\nu)}(2y)\{\phi_1(y)-\phi_2(y)\}.
\]
Set $\nu=\lambda$.  For the $\beta=1$ ensemble,
$p_{N,\lambda}^{\mathbb R}(x)=R^{\mathrm{LV}}_{N,1,\lambda}(x/2)/(2N)$;
the factor $1/2$ is the Jacobian of $x=2y$.  The same rescaling converts the
$\beta=2$ Laguerre weight into $x^\lambda\e^{-x}$, so
$R^{\mathrm{LV}}_{N,2,\lambda}(x)/N=p_{N,\lambda}^{\mathbb C}(x)$ in our
complex normalization.  Substitution of $y=x/2$ gives
\eqref{eq:density-general}: the Laguerre factor becomes
$L_{N-1}^{(\lambda)}(x)$ and $\phi_i(x/2)$ becomes exactly
\eqref{eq:Phi1}--\eqref{eq:Phi2}.  Integration against
$N^{-1/2}x^{1/2}$ proves
\eqref{eq:alpha-general-correction}--\eqref{eq:J-general}.
\end{proof}

Taking the difference between dimensions $N$ and $N+1$ in
\eqref{eq:alpha-general-correction} gives
\begin{equation}\label{eq:real-decrement-general}
 \alpha_{\mathbb R}^{(\lambda)}(N)
 -\alpha_{\mathbb R}^{(\lambda)}(N+1)
 =\Gamma_{N,\lambda}
 -\left(\Xi_N^{(\lambda)}-\Xi_{N+1}^{(\lambda)}\right),
\end{equation}
where $\Gamma_{N,\lambda}>0$ by Paper~I~\cite[Corollary 3.2]{HutnikLUE}.  The sign of the
real decrement is therefore governed by one comparison: the unitary decrement
against the one-step loss of the orthogonal correction.

\subsection{A positive diagonal representation}

The correction in Proposition~\ref{prop:general-correction} is explicit, but
its sign is hidden by the incomplete-gamma term in $\Phi_{2,N}^{(\lambda)}$.
Two observations make the sign visible.  Shifting the Laguerre parameter by
$1/2$ turns the mixed integrals along a diagonal into a convolution of explicit
connection coefficients, and this fixes their sign.  The remaining
incomplete-gamma term is then identified, by Abel completion, with the missing
tail of the same diagonal expansion.

Define
\begin{equation*}
 \mathcal Q_\lambda(r,s)
 :=\int_0^\infty x^{\lambda+1/2}\e^{-x}
 L_r^{(\lambda)}(x)L_s^{(\lambda)}(x)\dd x.
\end{equation*}

\medskip
\noindent\textit{The diagonal kernel.}\quad
Let
\begin{equation*}
 (1-z)^{1/2}=\sum_{j=0}^\infty\sigma_jz^j,
 \qquad b_j:=-\sigma_j>0\quad(j\ge1),
\end{equation*}
and put
\begin{equation*}
 h_r^{(\lambda)}
 :=\int_0^\infty x^{\lambda+1/2}\e^{-x}
 \left(L_r^{(\lambda+1/2)}(x)\right)^2\dd x
 =\frac{\Gamma(r+\lambda+3/2)}{r!}.
\end{equation*}
The connection formula
\[
 L_r^{(\lambda)}(x)
 =\sum_{j=0}^r\sigma_jL_{r-j}^{(\lambda+1/2)}(x)
\]
gives
\begin{equation}\label{eq:Q-convolution}
 \mathcal Q_\lambda(r,s)
 =\sum_{j=0}^{\min\{r,s\}}
 \sigma_{r-j}\sigma_{s-j}h_j^{(\lambda)}.
\end{equation}
For $d\ge1$, set
\begin{equation*}
 \varkappa_d:=\frac{1}{\pi(d^2-1/4)}.
\end{equation*}
\begin{lemma}[Two identities for the connection coefficients]
\label{lem:b-identities}
For every $d\ge1$,
\begin{align}
 b_d-\sum_{t=1}^\infty b_tb_{t+d}&=\varkappa_d,
 \label{eq:b-convolution}\\
 \theta_d:=\sum_{t=1}^\infty t b_tb_{t+d}
 &=\frac1{\pi(2d+1)}.\notag
\end{align}
In particular,
\[
 \theta_{2u}=\frac1{\pi(4u+1)}<\frac1{4\pi u},
 \qquad u\ge1.
\]
\end{lemma}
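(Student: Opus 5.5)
Both identities are autocorrelation sums of the coefficient sequence of $f(z)=(1-z)^{1/2}=\sum_{j\ge0}\sigma_jz^j$, with $\sigma_0=1$ and $\sigma_j=-b_j$ for $j\ge1$. I would evaluate them as Fourier integrals on the unit circle. Since $\sigma_0\sigma_d=-b_d$ and $\sigma_t\sigma_{t+d}=b_tb_{t+d}$ for $t\ge1$, \eqref{eq:b-convolution} is equivalent to
\[
 \sum_{t\ge0}\sigma_t\sigma_{t+d}=-\varkappa_d .
\]
The weighted identity is equivalent to $\sum_{t\ge0}t\,\sigma_t\sigma_{t+d}=\theta_d$, because the $t=0$ term vanishes. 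The only analytic input is the asymptotic $b_t\sim t^{-3/2}/(2\sqrt\pi)$. It gives $(\sigma_t)\in\ell^1$, so $f$ is continuous on the closed disc, and it makes $\sum t\,b_tb_{t+d}$ absolutely convergent, with terms of size $t^{-2}$.

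\emph{First identity.} Since the $\sigma_t$ are real and square-summable, Parseval gives
\[
 \sum_{t\ge0}\sigma_t\sigma_{t+d}=\frac1{2\pi}\int_0^{2\pi}|f(\e^{\unit\vartheta})|^2\e^{-\unit d\vartheta}\dd\vartheta .
\]
Here $|f(\e^{\unit\vartheta})|^2=|1-\e^{\unit\vartheta}|=2\sin(\vartheta/2)$ on $(0,2\pi)$. By symmetry the integral becomes $\frac2\pi\int_0^\pi\sin(\vartheta/2)\cos(d\vartheta)\dd\vartheta$. The product-to-sum formula together with
\[
 \int_0^\pi\sin((d\pm\tfrac12)\vartheta)\dd\vartheta=\frac1{d\pm\frac12},
\]
which holds because $\cos((d\pm\frac12)\pi)=0$, yields
\[
 \frac1\pi\Bigl(\frac1{d+\frac12}-\frac1{d-\frac12}\Bigr)=-\frac{1}{\pi(d^2-1/4)}=-\varkappa_d .
\]

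\emph{Second identity.} The numbers $t\sigma_t$ are the coefficients of
\[
 zf'(z)=-\frac{z}{2(1-z)^{1/2}} .
\]
These coefficients are not square-summable, so I would not apply Parseval directly. Instead I would work at radius $r<1$, where
\[
 \sum_t t\sigma_t\sigma_{t+d}r^{2t+d}=\frac1{2\pi}\int_0^{2\pi}\overline{zf'(z)}\,f(z)\,z^{-d}\dd\vartheta,\qquad z=r\e^{\unit\vartheta}.
\]
Then let $r\to1$. On the left this is Abel's theorem, justified by absolute convergence. On the right it is dominated convergence, since $|f'(z)|\lesssim|1-z|^{-1/2}$ is integrable on the circle.

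On $|z|=1$ with $0<\vartheta<2\pi$ one has $1-\e^{\unit\vartheta}=2\sin(\vartheta/2)\e^{\unit(\vartheta-\pi)/2}$, so the principal root has argument $(\vartheta-\pi)/4$. Hence
\[
 (1-z)^{1/2}\big/\overline{(1-z)^{1/2}}=\e^{\unit(\vartheta-\pi)/2}=-\unit\,\e^{\unit\vartheta/2},
\]
and the integrand reduces to $\frac{\unit}{2}\e^{-\unit(d+1/2)\vartheta}$. Integrating gives
\[
 \int_0^{2\pi}\e^{-\unit(d+1/2)\vartheta}\dd\vartheta=\frac{2}{\unit(d+\frac12)},
\]
and therefore $\theta_d=\frac1{2\pi}\cdot\frac{\unit}{2}\cdot\frac{2}{\unit(d+1/2)}=\frac{1}{\pi(2d+1)}$. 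The final inequality is immediate from $4u+1>4u$.

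The main obstacle is the bookkeeping in the second identity: the branch of the square root on the boundary, the complex conjugation in the pairing, and the justification of the boundary limit. An independent check is worthwhile. For instance, for $d=1$ the first terms of $\sum t\,b_tb_{t+1}$, with $b_1=\frac12$, $b_2=\frac18$, $b_3=\frac1{16}$, $b_4=\frac5{128}$, should approach $1/(3\pi)\approx0.106$. One can also confirm the sign of $\varkappa_d$ against the first terms of $b_d-\sum_t b_tb_{t+d}$.
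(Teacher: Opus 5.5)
Your argument is correct. For the first identity it is essentially the paper's proof. The paper writes $B(z)\bigl(1-B(z^{-1})\bigr)=\sqrt{1-z^{-1}}-|1-z|$ on the circle and discards the first term because it has no positive Fourier coefficients. You instead apply Parseval to $|f|^2=|1-z|$ directly. Both reduce to the $d$th Fourier coefficient of $2\sin(\vartheta/2)$.

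For the weighted identity your route genuinely differs. The paper inserts the beta representation $b_n=\pi^{-1}\int_0^1x^{n-3/2}(1-x)^{1/2}\dd x$ and uses Tonelli. Then $\theta_d=\pi^{-1}\int_0^1x^{d-3/2}(1-x)^{1/2}\,xB'(x)\dd x$ collapses to $(2\pi)^{-1}\int_0^1x^{d-1/2}\dd x$. Everything stays on $[0,1]$ with nonnegative integrands, so there is no branch to track and no regularization is needed. The same representation is also reused later for the tail bound on $b_t\mathcal B_t$.

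Your version keeps both identities inside one Fourier framework. The cost is an Abel limit $r\to1$, which is forced because $t\sigma_t\sim-t^{-1/2}/(2\sqrt\pi)$ is not square-summable. The other cost is the boundary branch computation, which you do correctly: the integrand is indeed $\frac{\unit}{2}\e^{-\unit(d+1/2)\vartheta}$.

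Two cosmetic points. With the factor $z^{-d}$, the left-hand side carries $r^{2t}$, not $r^{2t+d}$; this is harmless in the limit. The domination is also simpler than you state, because $|zf'(z)f(z)z^{-d}|=|z|^{1-d}/2$ exactly, which is bounded for $1/2\le r<1$.
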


\begin{proof}
Set $B(z)=1-\sqrt{1-z}=\sum_{n\ge1}b_nz^n$.  Since
$b_n=O(n^{-3/2})$, its boundary series is absolutely convergent.  The
coefficient of $z^d$ in $B(z)B(z^{-1})$ is
$\sum_{t\ge1}b_tb_{t+d}$.  On the unit circle,
\[
 B(z)\bigl(1-B(z^{-1})\bigr)
 =\sqrt{1-z^{-1}}-|1-z|.
\]
The first term has no positive Fourier coefficient.  Hence
$b_d-\sum_{t\ge1}b_tb_{t+d}$ is minus the $d$th Fourier coefficient of
$|1-\e^{\unit \theta}|=2\sin(\theta/2)$, and direct integration gives
\eqref{eq:b-convolution}.

For the weighted identity, use
\[
 b_n=\frac1\pi\int_0^1x^{n-3/2}(1-x)^{1/2}\,\dd x,
 \qquad n\ge1.
\]
Tonelli's theorem and $xB'(x)=x/(2\sqrt{1-x})$ yield
\[
 \theta_d
 =\frac1\pi\int_0^1x^{d-3/2}(1-x)^{1/2}xB'(x)\,\dd x
 =\frac1{2\pi}\int_0^1x^{d-1/2}\,\dd x
 =\frac1{\pi(2d+1)},
\] which completes the proof.
\end{proof}

\begin{proposition}[Positive diagonal kernel]\label{prop:kernel-lambda}
For every $r\ge0$, $d\ge1$, and every real $\lambda\ge0$,
\begin{equation}\label{eq:kernel-lambda}
 -\mathcal Q_\lambda(r,r+d)
 =h_r^{(\lambda)}
 \left(\varkappa_d+\mathcal E_{r,d}^{(\lambda)}\right),
\end{equation}
where
\begin{equation}\label{eq:E-lambda}
 \mathcal E_{r,d}^{(\lambda)}
 =\sum_{t=1}^r
 \left(1-\frac{h_{r-t}^{(\lambda)}}{ h_r^{(\lambda)}}\right)
 b_tb_{t+d}
 +\sum_{t=r+1}^\infty b_tb_{t+d}.
\end{equation}
Moreover, $\mathcal E_{r,d}^{(\lambda)}\ge0$ and $\mathcal E_{r+1,d}^{(\lambda)}
 \le\mathcal E_{r,d}^{(\lambda)}$.
In particular,
\begin{equation*}
 \mathcal Q_\lambda(r,r+d)<0
 \qquad(r\ge0,\ d\ge1).
\end{equation*}
\end{proposition}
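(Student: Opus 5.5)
Set $s=r+d$ in \eqref{eq:Q-convolution}. Since $\min\{r,r+d\}=r$, we get
\[
 \mathcal Q_\lambda(r,r+d)=\sum_{j=0}^r\sigma_{r-j}\sigma_{r+d-j}h_j^{(\lambda)}
 =\sum_{t=0}^r\sigma_t\sigma_{t+d}h_{r-t}^{(\lambda)},
\]
after substituting $t=r-j$. The term $t=0$ has $\sigma_0=1$ and $\sigma_d=-b_d$, so it equals $-b_d h_r^{(\lambda)}$. For $t\ge1$ we have $\sigma_t\sigma_{t+d}=b_tb_{t+d}$. Hence
\[
 -\mathcal Q_\lambda(r,r+d)=h_r^{(\lambda)}\Bigl(b_d-\sum_{t=1}^r\frac{h_{r-t}^{(\lambda)}}{h_r^{(\lambda)}}b_tb_{t+d}\Bigr).
\]
Next we use \eqref{eq:b-convolution} from Lemma~\ref{lem:b-identities} in the form $b_d=\varkappa_d+\sum_{t\ge1}b_tb_{t+d}$. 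This series converges absolutely because $b_n=O(n^{-3/2})$. Substituting it and splitting the sum at $t=r$ reproduces exactly \eqref{eq:kernel-lambda} with $\mathcal E_{r,d}^{(\lambda)}$ as in \eqref{eq:E-lambda}.

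For the sign and monotonicity, the key input is that $h_n^{(\lambda)}=\Gamma(n+\lambda+3/2)/n!$ is nondecreasing in $n$. Indeed,
\[
 h_{n+1}^{(\lambda)}/h_n^{(\lambda)}=(n+\lambda+3/2)/(n+1)>1
\]
for $\lambda\ge0$. Therefore $0<h_{r-t}^{(\lambda)}/h_r^{(\lambda)}\le1$ for $1\le t\le r$. All coefficients $1-h_{r-t}/h_r$ are then nonnegative, the $b$'s are positive, and so $\mathcal E_{r,d}^{(\lambda)}\ge0$.

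For $\mathcal E_{r+1,d}\le\mathcal E_{r,d}$, write
\[
 \mathcal E_{r,d}^{(\lambda)}=\sum_{t\ge1}c_{r,t}\,b_tb_{t+d},
 \qquad
 c_{r,t}=\begin{cases}1-h_{r-t}/h_r, & t\le r,\\ 1, & t>r.\end{cases}
\]
It suffices to show $c_{r+1,t}\le c_{r,t}$ termwise. For $t\ge r+2$ both coefficients equal $1$. For $t=r+1$ we have $c_{r+1,r+1}=1-h_0/h_{r+1}\le1=c_{r,r+1}$. For $1\le t\le r$ the inequality reads $h_{r+1-t}/h_{r+1}\ge h_{r-t}/h_r$, i.e.
\[
 \frac{h_{r+1-t}}{h_{r-t}}\ge\frac{h_{r+1}}{h_r}.
\]
With $g(n)=h_{n+1}/h_n=(n+\lambda+3/2)/(n+1)=1+(\lambda+1/2)/(n+1)$, this is $g(r-t)\ge g(r)$. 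That holds because $g$ is decreasing in $n$, since $\lambda+1/2>0$. This log-concavity-type step is the only point needing care; everything else is bookkeeping.

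Finally, $\varkappa_d=1/(\pi(d^2-1/4))>0$ for $d\ge1$ and $h_r^{(\lambda)}>0$, while $\mathcal E_{r,d}^{(\lambda)}\ge0$. Hence $-\mathcal Q_\lambda(r,r+d)>0$, which is the last claim. The main obstacle is already handled by Lemma~\ref{lem:b-identities}: the exact closed form of $b_d-\sum_t b_tb_{t+d}$. Once that is available, the proposition reduces to the monotonicity of $h_n$ and of its ratio $g$.
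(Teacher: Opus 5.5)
Your proof is correct and follows the paper's route: the same reindexing $t=r-j$ of the convolution formula, the same insertion of the identity for $b_d-\sum_{t\ge1}b_tb_{t+d}$, and the same termwise coefficient comparison for the monotonicity in $r$. The only cosmetic difference is that you phrase that comparison as the decrease of $h_{n+1}^{(\lambda)}/h_n^{(\lambda)}=1+(\lambda+1/2)/(n+1)$, while the paper shows the product $\prod_{j=0}^{t-1}(r-j)/(r+\lambda+1/2-j)$ increases in $r$; these are the same fact, and you also make the bound $h_{r-t}^{(\lambda)}/h_r^{(\lambda)}\le1$ explicit where the paper leaves it implicit.
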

\begin{proof}
Set $s=r+d$ in \eqref{eq:Q-convolution} and write $t=r-j$.  Since
$\sigma_d=-b_d$ and $\sigma_t\sigma_{t+d}=b_tb_{t+d}$ for $t\ge1$,
\[
 -\mathcal Q_\lambda(r,r+d)
 =b_dh_r^{(\lambda)}
 -\sum_{t=1}^rb_tb_{t+d}h_{r-t}^{(\lambda)}.
\]
Insert \eqref{eq:b-convolution} and divide by $h_r^{(\lambda)}$ to obtain
\eqref{eq:kernel-lambda}--\eqref{eq:E-lambda}.  Every term in
\eqref{eq:E-lambda} is nonnegative.  For fixed $t$,
\[
 \frac{h_{r-t}^{(\lambda)}}{ h_r^{(\lambda)}}
 =\prod_{j=0}^{t-1}\frac{r-j}{ r+\lambda+1/2-j}
\]
is increasing in $r$.  With
$\mathfrak p_{r,t}^{(\lambda)}:=h_{r-t}^{(\lambda)}/h_r^{(\lambda)}$ and the zero
extension $\mathfrak p_{r,r+1}^{(\lambda)}:=0$, subtracting the two representations
\eqref{eq:E-lambda} gives
\[
 \mathcal E_{r,d}^{(\lambda)}-\mathcal E_{r+1,d}^{(\lambda)}
 =\sum_{t=1}^{r+1}
 \left(\mathfrak p_{r+1,t}^{(\lambda)}-\mathfrak p_{r,t}^{(\lambda)}\right)b_tb_{t+d}\ge0.
\]
This also gives the monotonicity in $r$.
\end{proof}

Only even diagonal distances occur after the parity completion, so from now
on write
\[
 K_{r,u}^{(\lambda)}:=\varkappa_{2u}+\mathcal E_{r,2u}^{(\lambda)},
 \qquad u\ge1.
\]
By Proposition~\ref{prop:kernel-lambda}, $K_{r,u}^{(\lambda)}>0$.

\medskip
\noindent\textit{Completing the parity tail.}\quad
The finite sum in $\Phi_{1,N}^{(\lambda)}$ contains only one parity.  The Abel
argument identifies $\Phi_{2,N}^{(\lambda)}$ with the complementary infinite
tail.  Recall that $\epsilon_N=N\bmod2$.

\begin{theorem}[Exact diagonal completion]
For every $N\ge1$ and every real $\lambda\ge0$, the correction factor
$J_N^{(\lambda)}$ in \eqref{eq:J-general} satisfies
\begin{equation}\label{eq:J-tail-general}
 J_N^{(\lambda)}
 =-\sum_{m=(N+\epsilon_N)/2}^\infty
 w_{m,\epsilon_N}^{(\lambda)}
 \mathcal Q_\lambda\left(N-1,2m+1-\epsilon_N\right).
\end{equation}
\end{theorem}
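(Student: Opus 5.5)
The plan is to show that the incomplete-gamma term $\Phi_{2,N}^{(\lambda)}$ is exactly the Abel sum of the \emph{full} parity series whose initial segment is $\Phi_{1,N}^{(\lambda)}$. More precisely, for $0<\rho<1$ put
\[
 S_\rho(x):=x^\lambda\e^{-x}\sum_{m=0}^\infty \rho^{m}\,w_{m,\epsilon_N}^{(\lambda)}L_{2m+1-\epsilon_N}^{(\lambda)}(x).
\]
The goal is to prove $S_\rho\to\Phi_{2,N}^{(\lambda)}$ as $\rho\to1^-$, in a sense strong enough to integrate against $x^{1/2}L_{N-1}^{(\lambda)}(x)$. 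Granting this, integrating $\Phi_{1,N}^{(\lambda)}$ term by term gives the finite sum $\sum_{m\le(N+\epsilon_N-2)/2}w_{m,\epsilon_N}^{(\lambda)}\mathcal Q_\lambda(N-1,2m+1-\epsilon_N)$. The full Abel-summed series gives the same sum over all $m\ge0$. Their difference is minus the tail, which is \eqref{eq:J-tail-general}.

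\textbf{Step 1 (closed form of the regularized series).} I would write the weights as Beta integrals,
\[
 w_{m,\epsilon_N}^{(\lambda)}=\frac{2^{1-\lambda}}{\Gamma((\lambda+1)/2)}\int_0^1 t^{m-\epsilon_N/2}(1-t)^{(\lambda-1)/2}\dd t .
\]
I would then use the Laguerre generating function $G(z,x)=\sum_n L_n^{(\lambda)}(x)z^n=(1-z)^{-\lambda-1}\exp(-xz/(1-z))$. The odd part (for $\epsilon_N=0$) or the even part (for $\epsilon_N=1$) is $\tfrac12[G(z,x)\mp G(-z,x)]$. Evaluating at $z=\sqrt{\rho t}$ and dividing by the appropriate power of $z$ gives $\sum_m(\rho t)^mL_{2m+1-\epsilon_N}^{(\lambda)}(x)$ in closed form. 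This allows the $m$-sum and the $t$-integral to be interchanged for $\rho<1$, so $S_\rho(x)$ becomes a one-dimensional integral over $t\in(0,1)$ of explicit exponentials.

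\textbf{Step 2 (the limit $\rho\to1$).} At $\rho=1$ I would substitute $s=\sqrt t$ and then $u=x s/(1\mp s)$ or a similar change of variables. The aim is to show that the $G(z,x)$ piece and the $G(-z,x)$ piece produce, respectively, the term $2\Gamma((\lambda+1)/2,x/2)/\Gamma((\lambda+1)/2)$ and the constant $2\epsilon_N-1$ in \eqref{eq:Phi2}, together with the prefactor $(x/2)^{(\lambda-1)/2}\e^{-x/2}$. The factor $2^{1-\lambda}$ and the half-integer exponents in $w$ are consistent with this, since $\e^{-x/2}$ arises from $x^\lambda\e^{-x}\exp(-xz/(1-z))$ evaluated near $z=-1$.

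\textbf{Step 3 (justifying the interchange).} For fixed $N$, \eqref{eq:Q-convolution} with $r=N-1$ fixed gives $|\mathcal Q_\lambda(N-1,n)|\lesssim_N n^{-3/2}$, because $\sigma_k=O(k^{-3/2})$. Also $w_{m,\epsilon_N}^{(\lambda)}\asymp m^{-(\lambda+1)/2}$. Hence $\sum_m w_{m,\epsilon_N}^{(\lambda)}|\mathcal Q_\lambda(N-1,2m+1-\epsilon_N)|<\infty$ for all $\lambda\ge0$. For $\rho<1$ the series defining $S_\rho$ converges absolutely in $L^1(x^{1/2}|L_{N-1}^{(\lambda)}|\dd x)$, so
\[
 \int_0^\infty x^{1/2}L_{N-1}^{(\lambda)}S_\rho\dd x=\sum_m\rho^m w_{m,\epsilon_N}^{(\lambda)}\mathcal Q_\lambda(N-1,2m+1-\epsilon_N).
\]
By Abel's theorem the right side tends to the full sum as $\rho\to1^-$.

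\textbf{Main obstacle.} The hardest step will be the left side: showing that $\int x^{1/2}L_{N-1}^{(\lambda)}S_\rho\dd x\to\int x^{1/2}L_{N-1}^{(\lambda)}\Phi_{2,N}^{(\lambda)}\dd x$. I would first try dominated convergence in $x$ using the integral representation from Step 1. The $t$-integrand is bounded by its $\rho=1$ value, up to the singular behaviour at $t\to1$ of the $G(z,x)$ piece. That piece contributes $\exp(-x\sqrt t/(1-\sqrt t))$, which is harmless for $x>0$. The dominating function should then be $x^{1/2}|L_{N-1}^{(\lambda)}|$ times a multiple of $x^{(\lambda-1)/2}\e^{-x/2}$ plus an integrable hard-edge piece near $x=0$. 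If domination is awkward near $t=1$, I would fall back on a distributional argument. Since $L_{N-1}^{(\lambda)}$ is a polynomial, it suffices to check convergence of the moments $\int x^{k+1/2}S_\rho\dd x$. These can be computed exactly from the generating function and compared with the moments of $\Phi_{2,N}^{(\lambda)}$.
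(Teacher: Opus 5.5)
Your proposal is correct and follows essentially the paper's own proof. Both Abel-regularize the full parity series and write $w_{m,\epsilon_N}^{(\lambda)}$ as a beta integral. Both then sum with the Laguerre generating function and evaluate the boundary limit via the substitutions $y=(1\pm z)/(1\mp z)$. The limit under the $x$-integral is justified exactly by your fallback: expand $L_{N-1}^{(\lambda)}$ into monomials and integrate in $x$ first, after which the $z$-integrand is dominated by a multiple of $(1-z^2)^{(\lambda-1)/2}(1+z)^{j+1/2}$. Your Step~3 on termwise integration for $\rho<1$ and Abel's theorem makes explicit what the paper leaves implicit.

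One bookkeeping correction for Step~2: the $G(z,x)$ and $G(-z,x)$ pieces produce $\Gamma(\tfrac{\lambda+1}{2},\tfrac{x}{2})$ and $(-1)^{\epsilon_N+1}\gamma(\tfrac{\lambda+1}{2},\tfrac{x}{2})$ (times a common prefactor). They match the bracket in $\Phi_{2,N}^{(\lambda)}$ only after using $\gamma(\tfrac{\lambda+1}{2},\cdot)=\Gamma(\tfrac{\lambda+1}{2})-\Gamma(\tfrac{\lambda+1}{2},\cdot)$, not term by term as you predicted.
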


\begin{proof}
Put $\omega=(\lambda+1)/2$.  Abel regularization lets us sum the parity
expansion up to its boundary value before identifying the incomplete-gamma term.
For $0<\zeta<1$, set
\[
 S_{\zeta,\epsilon_N}^{(\lambda)}(x)
 :=\sum_{m=0}^{\infty}w_{m,\epsilon_N}^{(\lambda)}
 \zeta^{2m+1-\epsilon_N}L_{2m+1-\epsilon_N}^{(\lambda)}(x).
\]
Using
\[
 w_{m,\epsilon_N}^{(\lambda)}
 =\frac{2^{1-\lambda}}{\Gamma(\omega)}
 \int_0^1t^{m-\epsilon_N/2}(1-t)^{\omega-1}\dd t
\]
together with the Laguerre generating function, and then putting $t=z^2$,
gives
\[
 S_{\zeta,\epsilon_N}^{(\lambda)}(x)
 =\frac{2^{1-\lambda}}{\Gamma(\omega)}
 \int_0^1(1-z^2)^{\omega-1}
 \bigl[\mathcal L_\lambda(\zeta z,x)
 +(-1)^{\epsilon_N+1}\mathcal L_\lambda(-\zeta z,x)\bigr]\dd z,
\]
where
\[
 \mathcal L_\lambda(z,x)=(1-z)^{-\lambda-1}
 \exp\!\left(-\frac{xz}{1-z}\right).
\]
Dominated convergence allows us to let $\zeta\uparrow1$.  The two elementary
changes of variables $y=(1+z)/(1-z)$ and $y=(1-z)/(1+z)$ give
\[
 S_{1,\epsilon_N}^{(\lambda)}(x)
 =\frac{2^{(1-\lambda)/2}x^{-\omega}\e^{x/2}}{\Gamma(\omega)}
 \bigl[\Gamma(\omega,x/2)
 +(-1)^{\epsilon_N+1}\gamma(\omega,x/2)\bigr].
\]
After multiplication by $x^\lambda\e^{-x}$ and use of
$\Gamma(\omega)=\Gamma(\omega,x/2)+\gamma(\omega,x/2)$, this is precisely
$\Phi_{2,N}^{(\lambda)}(x)$ from \eqref{eq:Phi2}.

The same limiting passage is valid after multiplication by
$x^{1/2}L_{N-1}^{(\lambda)}(x)$ and integration in $x$.  To see this, expand
the fixed Laguerre polynomial into finitely many monomials $x^j$ and integrate
first in $x$; the remaining $z$-integrand is bounded uniformly for
$0<\zeta\le1$ by a constant multiple of
$(1-z^2)^{\omega-1}(1+z)^{j+1/2}$, which is integrable.

We may now insert the Abel expansion directly into
\eqref{eq:J-general}.  The finite parity sum in $\Phi_{1,N}^{(\lambda)}$
cancels its initial segment, and the uncancelled part is exactly
\eqref{eq:J-tail-general}.  The remaining tail is an ordinary absolutely convergent series: for fixed $r$, \eqref{eq:Q-convolution} and
$b_n=O(n^{-3/2})$ give $\mathcal Q_\lambda(r,s)=O_r(s^{-3/2})$, while
$w_{m,\epsilon_N}^{(\lambda)}=O_\lambda(m^{-(\lambda+1)/2})$.  Its $m$th
term is $O_{N,\lambda}(m^{-(\lambda+4)/2})$.
\end{proof}

After the completion, the incomplete-gamma term has disappeared:
$J_N^{(\lambda)}$ is an ordinary absolutely convergent diagonal tail.
Proposition~\ref{prop:kernel-lambda} makes its sign immediate: every mixed
integral in \eqref{eq:J-tail-general} is negative and every weight
$w_{m,\epsilon_N}^{(\lambda)}$ is positive, so
$J_N^{(\lambda)}>0$ and hence $\Xi_N^{(\lambda)}>0$ for all
$N\ge1$ and all real $\lambda\ge0$.

\section{Finite-dimensional monotonicity}\label{sec:finite-comparisons}

With the correction in diagonal form, the dimension problem becomes a
comparison.  By \eqref{eq:real-decrement-general}, the real decrement is the
complex decrement minus the one-step loss of the orthogonal correction.  The three
regimes require different strengths of the same comparison.  At square
shape one diagonal of the correction is enough.  For $\lambda\ge1$ we bound
the full correction loss uniformly in the shape.  Inside $0\le\lambda\le1$
we differentiate the comparison with respect to $\lambda$.

\subsection{Estimates from the unitary problem}

We retain the half-moment notation
$\Gamma_{N,\lambda}=\Gamma_{N,1/2,\lambda}$ from Paper~I.  Only four pieces
of unitary information enter
the argument.  The square estimate comes from Abreu--Patil.  The three
shape-dependent estimates come from Paper~I and play separate roles: the first
controls the $\lambda\ge1$ comparison, the second the derivative on
$0\le\lambda\le1$, and the third the second-order transition asymptotics.

At square shape, set
\[
 \mathsf H_N:=\sum_{k=1}^N\frac1k.
\]
Then Abreu and Patil \cite[Theorem~1]{AbreuPatil} give
\begin{equation}\label{eq:LUE-square-upper}
 0<\Gamma_{N,0}
 <\frac{\mathsf H_N+6\log2-10/3}
 {8\pi[N(N+1)]^{3/2}}.
\end{equation}
For every $N\ge1$ and $\lambda\ge0$, Paper~I \cite[Corollary 5.1]{HutnikLUE} gives
\begin{equation}\label{eq:LUE-shape-reserve}
 \Gamma_{N,\lambda}
 >\mathcal R_{N,\lambda}:=
 \frac{2\lambda}{\pi[N(N+1)]^{3/2}}
 \sum_{k=1}^{N}\frac{k}{k+\lambda},
\end{equation}
where $\mathcal R_{N,\lambda}$ is strictly increasing in $\lambda$, since
$\partial_\lambda[\lambda k/(k+\lambda)]=k^2/(k+\lambda)^2>0$.
Moreover,
\begin{equation}\label{eq:LUE-derivative-reserve}
 \partial_\lambda\Gamma_{N,\lambda}
 >
 \frac{2}{\pi[N(N+1)]^{3/2}}
 \sum_{k=1}^{N}
 \left(\frac{k}{k+\lambda}\right)^2.
\end{equation}
For the asymptotic analysis, Paper~I \cite[Theorem 5.2]{HutnikLUE} also gives the uniform two-term half-moment expansion: for every $\Lambda<\infty$, uniformly for
$0\le\lambda\le\Lambda$,
\begin{equation}\label{eq:LUE-decrement-refined}
 \Gamma_{N,\lambda}
 =\frac{2\lambda}{\pi N^2}
 +\frac{1-4\lambda^2}{8\pi}\frac{\log N}{N^3}
 +O_\Lambda(N^{-3}).
\end{equation}

The square estimate \eqref{eq:LUE-square-upper} is the only square-specific
input.  The shape and derivative lower bounds and the two-term expansion
\eqref{eq:LUE-shape-reserve}--\eqref{eq:LUE-decrement-refined} all come from
Paper~I~\cite{HutnikLUE}.  We also use its locally uniform
Marchenko--Pastur convergence once, only to identify the common limiting
level.  Notice in particular that \eqref{eq:LUE-derivative-reserve} already
implies that $\lambda\mapsto\Gamma_{N,\lambda}$ is strictly increasing; no
separate unitary level expansion is needed in the real comparison.

\subsection{Variation of the diagonal terms}

Each diagonal term is a coefficient times a kernel.  The estimates below are
organized accordingly: first the kernel, then the coefficient itself, and
finally the change of that coefficient when $N$ is replaced by $N+1$.
Recall from Lemma~\ref{lem:b-identities} that
\[
 \theta_d=\frac1{\pi(2d+1)},\qquad
 \theta_{2u}<\frac1{4\pi u}.
\]
Put
\[
 \vartheta_\lambda:=\lambda+\frac{1}{2}.
\]
For $1\le t\le r$, write
\[
 \mathfrak p_{r,t}^{(\lambda)}
 :=\frac{h_{r-t}^{(\lambda)}}{ h_r^{(\lambda)}}
 =\prod_{j=0}^{t-1}\frac{r-j}{ r+\vartheta_\lambda-j},
\]
and set $\mathfrak p_{r,t}^{(\lambda)}=0$ for $t>r$.

\begin{lemma}[Kernel variation bounds]
\label{lem:kernel-variation}
For every $\lambda\ge0$, $r\ge1$, and $1\le t\le r$,
\begin{equation}\label{eq:p-r-step}
 \mathfrak p_{r+1,t}^{(\lambda)}-\mathfrak p_{r,t}^{(\lambda)}
 =\mathfrak p_{r,t}^{(\lambda)}
 \frac{\vartheta_\lambda t}{(r-t+1)(r+\vartheta_\lambda+1)}.
\end{equation}
If $\lambda\ge1$ and $r,d\ge1$, then
\begin{equation}\label{eq:E-uniform-difference}
 0\le \mathcal E_{r,d}^{(\lambda)}
 -\mathcal E_{r+1,d}^{(\lambda)}
 \le
 \frac{\vartheta_\lambda}{(r+1)(r+\vartheta_\lambda+1)}\,\theta_d.
\end{equation}
Moreover, for every $r\ge0$, $u\ge1$, and $\lambda\ge0$,
\begin{equation}\label{eq:kernel-parameter-derivative}
 0\le
 \partial_\lambda K_{r,u}^{(\lambda)}
 \le \frac{\theta_{2u}}{r+1}
 =\frac{1}{\pi(r+1)(4u+1)}.
\end{equation}
\end{lemma}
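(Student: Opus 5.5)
The plan is to treat the three assertions separately. Each rests on the explicit product formula for $\mathfrak p_{r,t}^{(\lambda)}$ together with the representation \eqref{eq:E-lambda} of $\mathcal E_{r,d}^{(\lambda)}$ from Proposition~\ref{prop:kernel-lambda}.

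\emph{Step 1: the identity \eqref{eq:p-r-step}.} I would form the ratio $\mathfrak p_{r+1,t}^{(\lambda)}/\mathfrak p_{r,t}^{(\lambda)}$ factor by factor. The quotients $(r+1-j)/(r-j)$ telescope to $(r+1)/(r-t+1)$. The quotients $(r+\vartheta_\lambda-j)/(r+1+\vartheta_\lambda-j)$ telescope to $(r+\vartheta_\lambda-t+1)/(r+\vartheta_\lambda+1)$. With $A=r+1$, subtracting $1$ from the ratio gives the numerator
\[
A(A+\vartheta_\lambda-t)-(A-t)(A+\vartheta_\lambda)=t\vartheta_\lambda ,
\]
which is exactly \eqref{eq:p-r-step}.

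\emph{Step 2: the bound \eqref{eq:E-uniform-difference} for $\lambda\ge1$.} The lower bound $0\le\mathcal E_{r,d}^{(\lambda)}-\mathcal E_{r+1,d}^{(\lambda)}$ is already contained in Proposition~\ref{prop:kernel-lambda}. For the upper bound I would start from the difference formula in that proof,
\[
\sum_{t=1}^{r+1}\bigl(\mathfrak p_{r+1,t}^{(\lambda)}-\mathfrak p_{r,t}^{(\lambda)}\bigr)b_tb_{t+d}.
\]
The aim is to show each coefficient is at most $\vartheta_\lambda t/((r+1)(r+\vartheta_\lambda+1))$; then summing against $b_tb_{t+d}$ and using the definition of $\theta_d$ finishes.

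For $1\le t\le r$, Step~1 reduces this to $\mathfrak p_{r,t}^{(\lambda)}\le (r-t+1)/(r+1)$. That holds because $\vartheta_\lambda\ge 3/2\ge1$, so each factor $(r-j)/(r+\vartheta_\lambda-j)$ is at most $(r-j)/(r+1-j)$, and the latter telescopes.

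The boundary term $t=r+1$ is $\mathfrak p_{r+1,r+1}^{(\lambda)}$ alone, since $\mathfrak p_{r,r+1}^{(\lambda)}=0$. Bounding its first $r$ factors the same way and keeping the last factor $1/(1+\vartheta_\lambda)$ gives at most $2/((r+2)(1+\vartheta_\lambda))$. This is below $\vartheta_\lambda/(r+\vartheta_\lambda+1)$ because
\[
\vartheta_\lambda(1+\vartheta_\lambda)(r+2)\ge 2(r+1+\vartheta_\lambda)\qquad\text{for }\vartheta_\lambda\ge1 .
\]

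\emph{Step 3: the derivative bound \eqref{eq:kernel-parameter-derivative}.} Since $\varkappa_{2u}$ and the tail $\sum_{t>r}b_tb_{t+2u}$ do not depend on $\lambda$, we have
\[
\partial_\lambda K_{r,u}^{(\lambda)}=-\sum_{t=1}^r\partial_\lambda\mathfrak p_{r,t}^{(\lambda)}\,b_tb_{t+2u}.
\]
This vanishes for $r=0$. Logarithmic differentiation gives
\[
-\partial_\lambda\mathfrak p_{r,t}^{(\lambda)}=\mathfrak p_{r,t}^{(\lambda)}\sum_{j=0}^{t-1}\frac1{r+\vartheta_\lambda-j}\ge0,
\]
so nonnegativity is immediate. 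For the upper bound it suffices to prove $-\partial_\lambda\mathfrak p_{r,t}^{(\lambda)}\le t/(r+1)$; the weighted identity $\theta_{2u}=\sum_t t\,b_tb_{t+2u}=1/(\pi(4u+1))$ from Lemma~\ref{lem:b-identities} then gives the claim. Termwise, it is enough to show
\[
\frac{\mathfrak p_{r,t}^{(\lambda)}}{r+\vartheta_\lambda-j}\le\frac1{r+1},\qquad 0\le j\le t-1 .
\]

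\emph{Main obstacle.} This last inequality is the delicate step, because here only $\vartheta_\lambda\ge1/2$ is available and the crude factorwise comparison of Step~2 fails. I would first try absorbing the extra denominator into the product. Separate the factor $i=j$, giving $(r-j)/(r+\vartheta_\lambda-j)^2$, and bound the remaining factors by their values at $\vartheta_\lambda=1/2$, since each factor decreases in $\vartheta_\lambda$. One is then left with a Gamma-ratio of the form
\[
\frac{\Gamma(r+1)\,\Gamma(r-t+3/2)}{\Gamma(r-t+1)\,\Gamma(r+3/2)}
\]
times an elementary factor, to be controlled by Gautschi's inequality. If that turns out to be too lossy, the fallback is to estimate the full sum $\sum_j$ at once: view $\vartheta\mapsto\mathfrak p_{r,t}(\vartheta)$ as decreasing and log-convex, and compare $-\partial_\vartheta\mathfrak p$ with a difference quotient of $\mathfrak p$ over $\vartheta\in[\vartheta_\lambda,\vartheta_\lambda+1]$, where Step~1-type telescoping applies.
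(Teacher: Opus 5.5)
\textbf{Verdict: the proposal has a genuine gap in Step~3.} Your Steps 1 and 2 are correct and follow the paper. For the new term $t=r+1$, your bound $2/((r+2)(1+\vartheta_\lambda))$ is a harmless variant of the paper's estimate $\prod_{k=1}^{r+1}k/(k+\vartheta_\lambda)\le 1/(r+\vartheta_\lambda+1)$.

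\emph{The termwise reduction is false.} You reduce Step~3 to $\mathfrak p_{r,t}^{(\lambda)}/(r+\vartheta_\lambda-j)\le 1/(r+1)$ for each $j$, and this inequality does not hold. So no Gautschi-type estimate can prove it.
\begin{itemize}
\item At $\lambda=0$, $r=t=2$, $j=1$: $\mathfrak p_{2,2}=\frac45\cdot\frac23=\frac8{15}$, and $\frac{8/15}{3/2}=\frac{16}{45}>\frac13$.
\item For $t=r$, $j=r-1$, $\lambda=0$: the term is $\frac23\mathfrak p_{r,r}\sim\frac{\sqrt\pi}{3}r^{-1/2}$, far above $1/(r+1)$.
\end{itemize}
Only the sum over $j$ obeys the bound.

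\emph{The fallback fails too, for a different reason.} The map $\vartheta\mapsto\mathfrak p_{r,t}(\vartheta)$ is convex. Hence the forward difference gives
\[
-\partial_\vartheta\mathfrak p_{r,t}(\vartheta)\ge\mathfrak p_{r,t}(\vartheta)-\mathfrak p_{r,t}(\vartheta+1)=\mathfrak p_{r,t}(\vartheta)\,\frac{t}{r+\vartheta+1},
\]
which is a \emph{lower} bound. In the example above it compares $128/225$ against $32/105$. The backward difference points the right way, but it equals $\mathfrak p_{r,t}(\vartheta)\,t/(r+\vartheta-t)$. For $t=r$ and $\vartheta=1/2$ this is of order $\sqrt r$ instead of $O(1)$.

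\emph{The missing idea is to estimate the product and the whole harmonic sum jointly}, which is what the paper does.
\begin{itemize}
\item Both factors of $\mathfrak p_{r,t}(\vartheta)\sum_{j=0}^{t-1}(r+\vartheta-j)^{-1}$ decrease in $\vartheta$, so take $\vartheta=1/2$ and set $k=r-t$.
\item From $(n/(n+1/2))^2\le n/(n+1)$ you get $\mathfrak p_{r,t}(1/2)\le\sqrt{(k+1)/(r+1)}$.
\item Convexity of $1/x$ (midpoint rule) gives $\sum_{n=k+1}^{r}(n+1/2)^{-1}\le\log\frac{r+1}{k+1}$.
\item The inequality $\log y\le(y-1)/\sqrt y$ for $y\ge1$ bounds this by $(r-k)/\sqrt{(r+1)(k+1)}$.
\end{itemize}
The square roots cancel and leave exactly $(r-k)/(r+1)=t/(r+1)$. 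The square-root decay of $\mathfrak p_{r,t}$ is what pays for the logarithmic size of the full sum. Neither a termwise bound nor a unit-step difference quotient captures that balance.
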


\begin{proof}
Direct division of the finite products gives \eqref{eq:p-r-step}.  If
$\lambda\ge1$, then
\[
 \mathfrak p_{r,t}^{(\lambda)}
 \le\prod_{j=0}^{t-1}\frac{r-j}{r+1-j}
 =\frac{r-t+1}{r+1},
\]
and hence, for $1\le t\le r$,
\[
 \mathfrak p_{r+1,t}^{(\lambda)}-\mathfrak p_{r,t}^{(\lambda)}
 \le
 \frac{\vartheta_\lambda t}{(r+1)(r+\vartheta_\lambda+1)}.
\]
For the new term $t=r+1$,
\[
 \mathfrak p_{r+1,r+1}^{(\lambda)}
 =\prod_{k=1}^{r+1}\frac{k}{k+\vartheta_\lambda}
 \le\frac1{r+\vartheta_\lambda+1}
 \le\frac{\vartheta_\lambda}{r+\vartheta_\lambda+1}.
\]
Subtracting the two representations \eqref{eq:E-lambda} and summing against
$b_tb_{t+d}$ gives \eqref{eq:E-uniform-difference}.

For the parameter derivative, the case $r=0$ is immediate.  If $r\ge1$,
write $\vartheta=\lambda+1/2$.  Since
\[
 K_{r,u}^{(\lambda)}
 =b_{2u}-\sum_{t=1}^r \mathfrak p_{r,t}(\vartheta)b_tb_{t+2u},
 \qquad
 \mathfrak p_{r,t}(\vartheta):=
 \prod_{j=0}^{t-1}\frac{r-j}{r+\vartheta-j},
\]
we have
\[
 -\partial_\vartheta \mathfrak p_{r,t}(\vartheta)
 =\mathfrak p_{r,t}(\vartheta)
 \sum_{j=0}^{t-1}\frac1{r+\vartheta-j}.
\]
Both factors on the right decrease with $\vartheta$, so it is enough to take
$\vartheta=1/2$.  Put $k=r-t$.  Then
\[
 \mathfrak p_{r,t}\!\left(\frac12\right)^2
 \le\prod_{n=k+1}^{r}\frac{n}{n+1}
 =\frac{k+1}{r+1},
\]
and midpoint convexity gives
\[
 \sum_{n=k+1}^{r}\frac1{n+1/2}
 \le\log\frac{r+1}{k+1}
 \le\frac{r-k}{\sqrt{(r+1)(k+1)}}.
\]
Combining the two estimates gives
\[
 0\le-\partial_\vartheta \mathfrak p_{r,t}(\vartheta)
 \le\frac{t}{r+1}.
\]
Differentiating the kernel and using Lemma~\ref{lem:b-identities} proves
\eqref{eq:kernel-parameter-derivative}.
\end{proof}

Reindexing the completed tail by its diagonal distance and using
Proposition~\ref{prop:kernel-lambda} gives the parity-free representation
\begin{equation*}
 \Xi_N^{(\lambda)}
 =\sum_{u\ge1}\mathcal A_{N,u}^{(\lambda)}
 \left(\varkappa_{2u}+\mathcal E_{N-1,2u}^{(\lambda)}\right),
\end{equation*}
where, for $N,u\ge1$,
\begin{equation}\label{eq:A-unified-gamma}
 \mathcal A_{N,u}^{(\lambda)}
 :=\frac{2^{-\lambda}}{ N^{3/2}}
 \frac{\Gamma((N+1)/2)\Gamma(N/2+u)\Gamma(N+\lambda+1/2)
 }{
 \Gamma((N+\lambda)/2)
 \Gamma((N+\lambda)/2+u+1/2)\Gamma(N)}.
\end{equation}
The parity bookkeeping ends here: it disappears from all later comparisons.  We
first control the size and shape dependence of a single coefficient; the
adjacent-dimension ratio is treated immediately
afterwards.

\begin{lemma}[Size and shape of the diagonal coefficients]
\label{lem:diagonal-coefficient-size}
For $N,u\ge1$ and $\lambda\ge0$, put
\[
\begin{aligned}
 x&=\frac{N+\lambda}{2},\qquad
 c=\frac N2,\qquad
 \omega=\frac{\lambda+1}{2},\\
 \mathcal G(x)&=\frac{\Gamma(x+1/4)\Gamma(x+3/4)}
 {\Gamma(x)\Gamma(x+1/2)},
 \qquad
 \mathfrak g(x):=\frac{\mathcal G'(x)}{\mathcal G(x)}.
\end{aligned}
\]
Then
\begin{equation}\label{eq:A-Pochhammer}
 \mathcal A_{N,u}^{(\lambda)}
 =\frac{\sqrt2}{N^{3/2}}\mathcal G(x)
 \frac{(c)_u}{(c+\omega)_u},
\end{equation}
and
\begin{equation}\label{eq:G-sqrt-bound}
 0<\mathcal G(x)\le\sqrt x.
\end{equation}
If $c\ge1$ and $\omega>0$, then
\begin{equation}\label{eq:digamma-sum}
 \sum_{u=1}^{\infty}\frac{(c)_u}{(c+\omega)_u}\frac1u
 =\psi(c+\omega)-\psi(\omega)
 \le\frac c\omega,
\end{equation}
where $\psi$ denotes the digamma function.  In particular,
\begin{equation}\label{eq:A-prefactor-global}
 \mathcal A_{N,u}^{(\lambda)}
 \le
 \frac{\sqrt{1+\lambda/N}}{N}\,
 \frac{(N/2)_u}{((N+\lambda+1)/2)_u},
\end{equation}
and, on $0\le\lambda\le1$,
\begin{equation}\label{eq:A-prefactor-strip}
 \mathcal A_{N,u}^{(\lambda)}
 \le
 \frac{\sqrt{1+1/N}}{N}\,
 \frac{(N/2)_u}{((N+\lambda+1)/2)_u}.
\end{equation}
Moreover, if $N\ge2$, then
\begin{equation}\label{eq:A-lambda-negative}
 \partial_\lambda \mathcal A_{N,u}^{(\lambda)}<0
 \qquad(\lambda\ge0).
\end{equation}
\end{lemma}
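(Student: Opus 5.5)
The plan is to treat the five assertions in order. Each reduces to a classical gamma-function fact once \eqref{eq:A-unified-gamma} is put in Pochhammer form.

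\emph{Step 1: the identity \eqref{eq:A-Pochhammer}.} With $c=N/2$ and $\omega=(\lambda+1)/2$ we have $(N+\lambda)/2+1/2=c+\omega$. Hence
\[
\Gamma(N/2+u)=\Gamma(c)(c)_u,\qquad \Gamma((N+\lambda)/2+u+1/2)=\Gamma(c+\omega)(c+\omega)_u .
\]
What remains of \eqref{eq:A-unified-gamma} is
\[
2^{-\lambda}\Gamma(\tfrac{N+1}2)\Gamma(\tfrac N2)\Gamma(N+\lambda+\tfrac12)\big/\bigl[\Gamma(x)\Gamma(x+\tfrac12)\Gamma(N)\bigr],\qquad x=(N+\lambda)/2 .
\]
Apply Legendre duplication twice:
\[
\Gamma(N)=2^{N-1}\pi^{-1/2}\Gamma(\tfrac N2)\Gamma(\tfrac{N+1}2),\qquad \Gamma(2x+\tfrac12)=2^{2x-1/2}\pi^{-1/2}\Gamma(x+\tfrac14)\Gamma(x+\tfrac34).
\]
The powers of two collapse to $2^{-\lambda}2^{N+\lambda-1/2}/2^{N-1}=\sqrt2$, which leaves exactly $\sqrt2\,\mathcal G(x)$.

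\emph{Step 2: the bound \eqref{eq:G-sqrt-bound}.} Positivity of $\mathcal G$ is clear. For the upper bound I would study $\phi(x)=2\log\mathcal G(x)-\log x$, which tends to $0$ as $x\to\infty$ by Stirling. The claim is that $\phi\le0$. Using $\psi(x+a)=\int_0^\infty\bigl(\e^{-t}/t-\e^{-(x+a)t}/(1-\e^{-t})\bigr)\dd t$, the numerator of $\mathfrak g$ factors as $(1-\e^{-t/4})(1-\e^{-t/2})$, so
\[
\mathfrak g(x)=\int_0^\infty \e^{-xt}\,\frac{1-\e^{-t/4}}{1+\e^{-t/2}}\,\dd t .
\]
If this pointwise comparison with $1/(2x)=\int_0^\infty\e^{-xt}\,\tfrac12\dd t$ is not uniform in $t$, I would fall back on splitting $\mathcal G=\bigl[\Gamma(x+\tfrac34)/\Gamma(x)\bigr]\cdot\bigl[\Gamma(x+\tfrac14)/\Gamma(x+\tfrac12)\bigr]$. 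I would then apply Wendel's inequality $\Gamma(x+s)/\Gamma(x)\le x^s$ to the first factor, and a Gautschi--Kershaw lower bound to the reciprocal of the second. This step, getting $\sqrt x$ with constant exactly $1$ for all $x\ge1/2$, is the one I expect to be the main obstacle. Small $x$ (that is, $N=1$ with small $\lambda$) may need a direct numerical check.

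\emph{Step 3: the digamma identity \eqref{eq:digamma-sum}.} Write
\[
\frac{(c)_u}{(c+\omega)_u}=\frac{B(c+u,\omega)}{B(c,\omega)}=\frac1{B(c,\omega)}\int_0^1t^{c+u-1}(1-t)^{\omega-1}\dd t .
\]
Summing $t^u/u=-\log(1-t)$ under the integral gives the standard Beta-derivative identity, which equals $\psi(c+\omega)-\psi(\omega)$. For the inequality I would use
\[
\psi(c+\omega)-\psi(\omega)=\sum_{k\ge0}\frac{c}{(k+\omega)(k+\omega+c)} .
\]
Isolating $k=0$ and comparing the rest with $\int_\omega^\infty c\,\dd s/(s(s+c))$ gives the bound $c/(\omega(\omega+c))+\log(1+c/\omega)$. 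The condition $c\ge1$ is used to bound this by $c/\omega$; if the elementary inequality comes out tight, I would refine the sum-to-integral comparison by convexity.

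\emph{Step 4: the prefactor bounds and the sign of $\partial_\lambda\mathcal A$.} Inserting Step 2 into Step 1 gives
\[
\mathcal A_{N,u}^{(\lambda)}\le \frac{\sqrt{2x}}{N^{3/2}}\cdot\frac{(c)_u}{(c+\omega)_u}=\frac{\sqrt{N+\lambda}}{N^{3/2}}\cdot\frac{(c)_u}{(c+\omega)_u},
\]
which is \eqref{eq:A-prefactor-global}; bounding $\lambda\le1$ inside the root gives \eqref{eq:A-prefactor-strip}. For \eqref{eq:A-lambda-negative} I would differentiate the logarithm of \eqref{eq:A-Pochhammer}:
\[
\partial_\lambda\log\mathcal A=\tfrac12\mathfrak g(x)-\tfrac12\sum_{j=0}^{u-1}\frac1{c+\omega+j}.
\]
It therefore suffices that $\mathfrak g(x)<1/(c+\omega)=1/(x+\tfrac12)$. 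I would read this off the integral form of $\mathfrak g$ from Step 2 by comparison with $\int\e^{-(x+1/2)t}\dd t$. The hypothesis $N\ge2$ enters by keeping $x\ge1$, where the comparison holds comfortably.
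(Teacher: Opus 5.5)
Your Step 1, the deduction of \eqref{eq:A-prefactor-global}--\eqref{eq:A-prefactor-strip} from \eqref{eq:G-sqrt-bound}, the identity part of Step 3, and your reduction of \eqref{eq:A-lambda-negative} to $\mathfrak g(x)<1/(x+\tfrac12)$ all agree with the paper. The three analytic estimates, however, are not actually established.

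\emph{Step 2.} You mis-factored the numerator: $1+\e^{-t/2}-\e^{-t/4}-\e^{-3t/4}=(1-\e^{-t/4})(1+\e^{-t/2})$. Hence $\mathfrak g(x)=\int_0^\infty \e^{-xt}(1+\e^{-t/4})^{-1}\dd t$. Your kernel vanishes like $t/8$ at $t=0$, so it would give $\mathfrak g(x)\sim 1/(8x^2)$, which contradicts $\mathcal G(x)\sim\sqrt x$. The correct kernel exceeds $\tfrac12$ everywhere, so $\phi'=2\mathfrak g-1/x>0$ and $\phi$ increases to $0$. With that correction your primary route works for all $x>0$ and needs no numerics.

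Your fallback, by contrast, cannot work. Once Wendel replaces the first factor by $x^{3/4}$, you would need $\Gamma(x+\tfrac12)/\Gamma(x+\tfrac14)\ge x^{1/4}$. But this ratio equals $x^{1/4}\bigl(1-\tfrac1{32x}+O(x^{-2})\bigr)$. Wendel discards exactly the $-\tfrac3{32x}$ correction of the first factor that would be needed. The paper's proof is two log-convexity inequalities, $\Gamma(x+\tfrac14)^2\le\Gamma(x)\Gamma(x+\tfrac12)$ and $\Gamma(x+\tfrac34)^2\le\Gamma(x+\tfrac12)\Gamma(x+1)$, whose product is $\mathcal G(x)^2\le x$.

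\emph{Step 4.} No pointwise comparison of kernels with $\e^{-t/2}$ can succeed. Such a comparison would give $\mathfrak g(x)<1/(x+\tfrac12)$ for every $x$, and this is false at $x=\tfrac12$, where $\mathfrak g=4(1-\log2)\approx1.23$. A quantitative bound is needed. From $(1+\e^{-s})^{-1}\le\tfrac12+\tfrac s4$ one gets $\mathfrak g(x)<\tfrac1{2x}+\tfrac1{16x^2}$. Then $\tfrac1{x+1/2}-\tfrac1{2x}-\tfrac1{16x^2}=\tfrac{16x^2-10x-1}{16x^2(2x+1)}>0$ for $x\ge1$, and this is where $N\ge2$ enters.

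\emph{Step 3.} The inequality fails outright, not merely by a constant. For $c=1$ one has $\psi(1+\omega)-\psi(\omega)=1/\omega$ exactly, so \eqref{eq:digamma-sum} is an equality there. Your bound $\tfrac1{\omega(\omega+1)}+\log(1+\tfrac1\omega)$ exceeds $\tfrac1\omega$ for every $\omega>0$, because $\log(1+\tfrac1\omega)>\tfrac1{\omega+1}$. For $1\le c<2$ and $\omega\to\infty$ it exceeds $c/\omega$ by about $c(2-c)/(2\omega^2)$. Any estimate that is strictly lossy at $c=1$, including a convexity-refined sum-to-integral comparison, must therefore fail.

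Both regimes are used later: $c=1$ in the $N=2$ derivative check of the appendix, and $c=\tfrac32$ with unbounded $\omega$ in Lemma~\ref{lem:uniform-correction-loss}. You need an argument that is exact at $c=1$. The paper writes $\psi(c+\omega)-\psi(\omega)=\int_0^\infty \e^{-\omega t}(1-\e^{-ct})(1-\e^{-t})^{-1}\dd t$ and uses $1-\e^{-ct}\le c(1-\e^{-t})$ for $c\ge1$. Equivalently, $F(c)=\psi(c+\omega)-\psi(\omega)$ is concave with $F(0)=0$ and $F(1)=1/\omega$, so $F(c)\le cF(1)=c/\omega$ for $c\ge1$.
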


\begin{proof}
Formula \eqref{eq:A-Pochhammer} follows from
\eqref{eq:A-unified-gamma} and the duplication formula.  Log-convexity of
$\Gamma$ gives
\begin{align*}
 \Gamma(x+1/4)^2&\le\Gamma(x)\Gamma(x+1/2),\\
 \Gamma(x+3/4)^2&\le\Gamma(x+1/2)\Gamma(x+1),
\end{align*}
so $\mathcal G(x)^2\le x$.  This proves \eqref{eq:G-sqrt-bound} and the
bounds \eqref{eq:A-prefactor-global}--\eqref{eq:A-prefactor-strip}.

The identity in \eqref{eq:digamma-sum} follows from the beta integral for
$(c)_u/(c+\omega)_u$ and $\sum_{u\ge1}t^u/u=-\log(1-t)$.  For the upper
bound,
\[
 \psi(c+\omega)-\psi(\omega)
 =\int_0^\infty
 \frac{\e^{-\omega t}(1-\e^{-ct})}{1-\e^{-t}}\,\dd t
 \le\frac c\omega,
\]
because $1-\e^{-ct}\le c(1-\e^{-t})$ for $c\ge1$.

For the derivative comparisons later on, we also need the shape dependence
of a single coefficient.  The digamma difference formula gives
\begin{equation}\label{eq:g-bounds}
 \mathfrak g(x)=\int_0^\infty\frac{\e^{-xt}}{1+\e^{-t/4}}\,\dd t,
 \qquad
 \frac1{2x}<\mathfrak g(x)<\frac1{2x}+\frac1{16x^2}.
\end{equation}
Differentiating \eqref{eq:A-Pochhammer} in $\lambda$ yields
\begin{equation}\label{eq:A-lambda-log-derivative}
 2\partial_\lambda\log \mathcal A_{N,u}^{(\lambda)}
 =
 \mathfrak g(x)-\bigl[\psi(x+1/2+u)-\psi(x+1/2)\bigr].
\end{equation}
For $u\ge1$ the bracket is at least $1/(x+1/2)$.  When $N\ge2$, we have
$x\ge1$ and
\[
 \frac1{x+1/2}
 -\left(\frac1{2x}+\frac1{16x^2}\right)
 =
 \frac{16x^2-10x-1}{16x^2(2x+1)}>0.
\]
This proves \eqref{eq:A-lambda-negative}.
\end{proof}

The dependence on the dimension is most transparent through the adjacent
ratio
\[
 R_{N,u}^{(\lambda)}
 :=\frac{\mathcal A_{N+1,u}^{(\lambda)}}{\mathcal A_{N,u}^{(\lambda)}}.
\]
The following bounds on this ratio are the ones needed in the three
monotonicity arguments.

\begin{lemma}[Adjacent dimensions]\label{lem:diagonal-adjacent}
For $N,u\ge1$ and $\lambda\ge0$, put
\[
 x=\frac{N+\lambda}{2},\qquad c=\frac N2.
\]
Then
\begin{equation}\label{eq:R-factorization}
 R_{N,u}^{(\lambda)}
 =
 \left(\frac N{N+1}\right)^{3/2}
 \frac{x+1/4}{x}
 \prod_{j=0}^{u-1}
 \left(
 1+\frac{\lambda+1}
 {4(c+j)(c+j+\lambda/2+1)}
 \right),
\end{equation}
and
\begin{equation}\label{eq:R-shape-derivative}
 \partial_\lambda\log R_{N,u}^{(\lambda)}
 =
 -\frac1{8x(x+1/4)}
 +\frac14\sum_{j=0}^{u-1}
 \frac1{(x+j+1/2)(x+j+1)}.
\end{equation}
The ratio $\lambda\mapsto R_{N,u}^{(\lambda)}$ is therefore nondecreasing on
$[0,\infty)$ when $N\ge3$, and on $[1,\infty)$ when $N=2$.  At
$\lambda=1$,
\begin{equation}\label{eq:R-shape-one}
 \frac{N}{N+1}<R_{N,u}^{(1)}<1.
\end{equation}
In particular, for every $N\ge2$ and $\lambda\ge1$,
\begin{equation}\label{eq:positive-loss-dominated}
 \left(\mathcal A_{N,u}^{(\lambda)}-\mathcal A_{N+1,u}^{(\lambda)}\right)_+
 \le \mathcal A_{N,u}^{(1)}-\mathcal A_{N+1,u}^{(1)},
\end{equation}
where $y_+:=\max\{y,0\}$.  On the transition strip $0\le\lambda\le1$,
for every $N\ge3$,
\begin{equation}\label{eq:A-N-gap-positive}
 0<\mathcal A_{N,u}^{(\lambda)}-\mathcal A_{N+1,u}^{(\lambda)}
 <\frac{\mathcal A_{N,u}^{(\lambda)}}{N+1},
\end{equation}
and
\begin{equation}\label{eq:A-gap-lambda-negative}
 \partial_\lambda
 \bigl(\mathcal A_{N,u}^{(\lambda)}-\mathcal A_{N+1,u}^{(\lambda)}\bigr)<0.
\end{equation}
\end{lemma}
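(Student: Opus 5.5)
The plan is to start from the Pochhammer form \eqref{eq:A-Pochhammer} and take the ratio of the three factors separately. The prefactor ratio is $(N/(N+1))^{3/2}$. With $x\mapsto x+1/2$ under $N\mapsto N+1$, we have $\mathcal G(x+1/2)/\mathcal G(x)=\Gamma(x+5/4)\Gamma(x)/(\Gamma(x+1/4)\Gamma(x+1))=(x+1/4)/x$. For the Pochhammer quotient, $c\mapsto c+1/2$ and $\omega$ is fixed, so
\[
\frac{(c+1/2)_u(c+\omega)_u}{(c)_u(c+\omega+1/2)_u}=\prod_{j=0}^{u-1}\frac{(c+j+1/2)(c+j+\omega)}{(c+j)(c+j+\omega+1/2)}.
\]
Expanding the numerator minus the denominator of each factor gives $\frac12(\omega-\tfrac12)+\dots$. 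More carefully, $(a+1/2)(a+\omega)-a(a+\omega+1/2)=\omega/2$ with $a=c+j$, and $\omega/2=(\lambda+1)/4$, while the denominator is $(c+j)(c+j+\lambda/2+1)$. This proves \eqref{eq:R-factorization}. For \eqref{eq:R-shape-derivative}, I will differentiate the logarithm of \eqref{eq:R-factorization} in $\lambda$ using $\partial_\lambda x=1/2$, with $c$ fixed. The middle factor gives $\frac12\bigl(\frac1{x+1/4}-\frac1x\bigr)=-\frac1{8x(x+1/4)}$. For the product it is cleaner to write each factor as $\frac{(c+j+1/2)}{(c+j)}\cdot\frac{x+j+1/2}{x+j+1}$, since $c+\omega=x+1/2$. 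Its $\lambda$-derivative is then $\frac12\bigl(\frac1{x+j+1/2}-\frac1{x+j+1}\bigr)=\frac14\frac1{(x+j+1/2)(x+j+1)}$.

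For the monotonicity in $\lambda$, it is enough to compare the first term with the $j=0$ term: I need $(x+1/2)(x+1)\le 2x(x+1/4)$, i.e. $x^2-x-1/2\ge0$, i.e. $x\ge(1+\sqrt3)/2\approx1.37$. For $N\ge3$ we have $x\ge3/2$, and for $N=2$ with $\lambda\ge1$ we have $x\ge3/2$, so both cases are covered.

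For \eqref{eq:R-shape-one} at $\lambda=1$, $x=c+1/2$, and each factor is $\frac{(c+j+1/2)^2}{(c+j)(c+j+1)}>1$. The product telescopes partially; I expect to bound it by the infinite product, which is $\Gamma(c)\Gamma(c+1)/\Gamma(c+1/2)^2$ by the Gamma product formula. Then $R<(N/(N+1))^{3/2}\frac{x+1/4}{x}\cdot\frac{\Gamma(c)\Gamma(c+1)}{\Gamma(c+1/2)^2}$, and a Gautschi/Kershaw-type bound $\Gamma(c+1)/\Gamma(c+1/2)<\sqrt{c+1/4}$, together with its companion, should make this $<1$. This is the step I expect to be the main obstacle, since the inequality is tight to order $N^{-2}$. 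If the crude infinite-product bound fails, I would instead use the fact that the partial product for $u$ is increasing in $u$ and bound its limit by $\sqrt{(c+1/4)/(c)}\cdot\ldots$, then verify small $N$ separately. The lower bound is easy: $R>(N/(N+1))^{3/2}(1+\frac1{2N+2})\cdot1$, and this exceeds $N/(N+1)$ because $\sqrt{(N+1)/N}\le1+1/(2N)$ fails in the wrong direction... so I will instead keep the $j=0$ product factor, $1+\frac1{2N(N+3)}\cdot2$, to close it.

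The consequences follow directly. For \eqref{eq:positive-loss-dominated} with $N\ge2$ and $\lambda\ge1$: $R$ is nondecreasing in $\lambda$ and $\mathcal A_{N,u}$ is decreasing in $\lambda$ by \eqref{eq:A-lambda-negative}, so $(\mathcal A_N-\mathcal A_{N+1})_+=\mathcal A_N(1-R)_+\le\mathcal A_N^{(1)}(1-R^{(1)})$. On the strip with $N\ge3$, monotonicity of $R$ together with \eqref{eq:R-shape-one} gives $N/(N+1)<R^{(0)}\le R^{(\lambda)}\le R^{(1)}<1$; this needs the lower bound at $\lambda=0$, which I get by the same estimate used for the lower bound in \eqref{eq:R-shape-one}. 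That yields \eqref{eq:A-N-gap-positive}. Finally, $\partial_\lambda[\mathcal A_N(1-R)]=(\partial_\lambda\mathcal A_N)(1-R)-\mathcal A_N\partial_\lambda R<0$, since the first term is negative and the second is nonpositive. This gives \eqref{eq:A-gap-lambda-negative}.
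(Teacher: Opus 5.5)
Your factorization \eqref{eq:R-factorization}, the log-derivative \eqref{eq:R-shape-derivative}, the $j=0$ criterion $x^2-x-1/2\ge0$, and the three consequences all match the paper's argument. The gap is in the upper bound $R_{N,u}^{(1)}<1$ in \eqref{eq:R-shape-one}. The consequences \eqref{eq:positive-loss-dominated}, \eqref{eq:A-N-gap-positive} and \eqref{eq:A-gap-lambda-negative} all depend on it.

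At $\lambda=1$ you have $c+\omega=c+1$, so the $j$th factor is
\[
\frac{(c+j+1/2)(c+j+1)}{(c+j)(c+j+3/2)}=1+\frac{1}{2(c+j)(c+j+3/2)}.
\]
It is not $\frac{(c+j+1/2)^2}{(c+j)(c+j+1)}=1+\frac{1}{4(c+j)(c+j+1)}$; that is the $\lambda=0$ factor. The true factor is strictly larger, so an upper bound for your product, the gamma ratio $\Gamma(c)\Gamma(c+1)/\Gamma(c+1/2)^2$, does not bound $R_{N,u}^{(1)}$ from above. In addition, the gamma inequality you invoke is reversed. For $c>0$ one has $\Gamma(c+1)/\Gamma(c+1/2)>\sqrt{c+1/4}$; this is \eqref{eq:gamma-quarter-lower} after the shift $x=c+1/2$. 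So it cannot supply an upper bound. As you observe, the inequality is tight at order $N^{-2}$, so neither slip is harmless.

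The missing observation is that, with the correct factor, the product telescopes completely:
\[
\prod_{j=0}^{u-1}\frac{c+j+1}{c+j}\cdot\frac{c+j+1/2}{c+j+3/2}=\frac{(c+u)(c+1/2)}{c\,(c+u+1/2)}=\frac{(N+1)(N+2u)}{N(N+2u+1)}.
\]
Hence $R_{N,u}^{(1)}=\sqrt{N/(N+1)}\,\frac{2N+3}{2N+2}\,\frac{N+2u}{N+2u+1}$.
\begin{itemize}
\item The upper bound reduces to $4(N+1)^3-N(2N+3)^2=3N+4>0$.
\item The last factor increases in $u$, so the lower bound need only be checked at $u=1$. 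There your idea of keeping the $j=0$ factor works. Its correct value is $1+2/(N(N+3))$, not $1+1/(N(N+3))$, but the smaller value still suffices.
\end{itemize}
Your fallback of bounding the partial product by its limit also works once the factor is corrected. The limit is then the elementary $(N+1)/N$, not a gamma ratio.
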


\begin{proof}
From \eqref{eq:A-Pochhammer},
\[
 \frac{\mathcal G(x+1/2)}{\mathcal G(x)}=\frac{x+1/4}{x},
\]
and for each $j\ge0$,
\[
 \frac{(c+j+1/2)(c+(\lambda+1)/2+j)}
 {(c+j)(c+(\lambda+1)/2+j+1/2)}
 =
 1+\frac{\lambda+1}{4(c+j)(c+j+\lambda/2+1)}.
\]
This gives \eqref{eq:R-factorization}; logarithmic differentiation gives
\eqref{eq:R-shape-derivative}.  Keeping only the $j=0$ term shows that
$\partial_\lambda R_{N,u}^{(\lambda)}\ge0$ whenever
\[
 2x(x+1/4)\ge(x+1/2)(x+1),
\]
or equivalently $x^2-x-1/2\ge0$.  This covers all $\lambda\ge0$ for
$N\ge3$ and all $\lambda\ge1$ for $N=2$.

At $\lambda=1$, the product in \eqref{eq:R-factorization} telescopes:
\[
 R_{N,u}^{(1)}
 =
 \sqrt{\frac{N}{N+1}}\,
 \frac{2N+3}{2N+2}\,
 \frac{N+2u}{N+2u+1}.
\]
The upper bound in \eqref{eq:R-shape-one} follows from
\[
 1-\frac{N(2N+3)^2}{4(N+1)^3}
 =\frac{3N+4}{4(N+1)^3}>0.
\]
For the lower bound it is enough to take $u=1$, since the last factor is
increasing in $u$.  After squaring,
\[
 \left(R_{N,1}^{(1)}\right)^2-\left(\frac{N}{N+1}\right)^2
 =
 \frac{N(13N^2+48N+36)}
 {4(N+1)^3(N+3)^2}>0.
\]
These two estimates give \eqref{eq:R-shape-one}.

We first use these bounds on the transition strip.  At $\lambda=0$,
\eqref{eq:R-factorization} becomes
\[
 R_{N,u}^{(0)}
 =
 \left(\frac{N}{N+1}\right)^{3/2}
 \frac{2N+1}{2N}
 \prod_{k=0}^{u-1}
 \frac{(N+2k+1)^2}{(N+2k)(N+2k+2)}.
\]
Every factor in the product is larger than one, and
\[
 \left[
 \sqrt{\frac{N}{N+1}}\frac{2N+1}{2N}
 \right]^2
 =1+\frac{1}{4N(N+1)}>1.
\]
So $R_{N,u}^{(0)}>N/(N+1)$.  For $N\ge3$, monotonicity in $\lambda$
and \eqref{eq:R-shape-one} now give
\[
 \frac{N}{N+1}<R_{N,u}^{(\lambda)}<1,
 \qquad 0\le\lambda\le1.
\]
Since
\[
 \mathcal A_{N,u}^{(\lambda)}-\mathcal A_{N+1,u}^{(\lambda)}
 =\mathcal A_{N,u}^{(\lambda)}(1-R_{N,u}^{(\lambda)}),
\]
this is \eqref{eq:A-N-gap-positive}.  Differentiating the same identity and
using \eqref{eq:A-lambda-negative} together with
\eqref{eq:R-shape-derivative} gives \eqref{eq:A-gap-lambda-negative}.

On the half-line $\lambda\ge1$, let $N\ge2$.  If
$R_{N,u}^{(\lambda)}\ge1$, the left-hand side of
\eqref{eq:positive-loss-dominated} is zero.  Otherwise, the decrease of
$\mathcal A_{N,u}^{(\lambda)}$ from Lemma~\ref{lem:diagonal-coefficient-size}
and the increase of $R_{N,u}^{(\lambda)}$ on $[1,\infty)$ give
\[
 \mathcal A_{N,u}^{(\lambda)}(1-R_{N,u}^{(\lambda)})
 \le
 \mathcal A_{N,u}^{(1)}(1-R_{N,u}^{(1)}),
\]
which is \eqref{eq:positive-loss-dominated}.
\end{proof}

\subsection{The square endpoint}

At square shape the argument becomes especially transparent: the first
diagonal of the orthogonal correction already dominates the entire complex
decrement.  The quantitative form of this observation is the following.

\begin{proposition}[Square comparison from the first diagonal]
\label{prop:square-one-diagonal-comparison}
For every $N\ge3$,
\begin{equation}\label{eq:square-explicit-reserve}
 \Xi_N^{(0)}-\Xi_{N+1}^{(0)}-\Gamma_{N,0}
 >
 \frac{\sqrt{N(N+1)}-\mathsf H_N-(6\log2-10/3)}
 {8\pi[N(N+1)]^{3/2}}.
\end{equation}
Consequently, for every $N\ge1$,
\[
 \Xi_N^{(0)}-\Xi_{N+1}^{(0)}-\Gamma_{N,0}
 >\frac{1}{160N^2}.
\]
\end{proposition}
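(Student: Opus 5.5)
The plan is to work with the parity-free diagonal representation
\[
 \Xi_N^{(0)}=\sum_{u\ge1}\mathcal A_{N,u}^{(0)}K_{N-1,u}^{(0)},
\]
and to bound the one-step loss $\Xi_N^{(0)}-\Xi_{N+1}^{(0)}$ from below term by term. By Proposition~\ref{prop:kernel-lambda}, $\mathcal E_{r+1,d}^{(0)}\le\mathcal E_{r,d}^{(0)}$, so $K_{N,u}^{(0)}\le K_{N-1,u}^{(0)}$. Each term therefore satisfies
\[
 \mathcal A_{N,u}^{(0)}K_{N-1,u}^{(0)}-\mathcal A_{N+1,u}^{(0)}K_{N,u}^{(0)}
 \ge\bigl(\mathcal A_{N,u}^{(0)}-\mathcal A_{N+1,u}^{(0)}\bigr)K_{N,u}^{(0)}.
\]
For $N\ge3$, \eqref{eq:A-N-gap-positive} at $\lambda=0$ shows that every such difference is positive. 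I will discard all diagonals $u\ge2$ and, in the first one, use $K_{N,1}^{(0)}\ge\varkappa_2=4/(15\pi)$. This yields
\[
 \Xi_N^{(0)}-\Xi_{N+1}^{(0)}\ge\frac{4}{15\pi}\,\mathcal A_{N,1}^{(0)}\bigl(1-R_{N,1}^{(0)}\bigr).
\]

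The next step is to make this first-diagonal loss explicit. From \eqref{eq:A-Pochhammer} with $\lambda=0$, $x=c=N/2$ and $\omega=1/2$,
\[
 \mathcal A_{N,1}^{(0)}=\sqrt2\,N^{-3/2}\,\mathcal G(N/2)\,\frac{N}{N+1}.
\]
The $\lambda=0$ form of \eqref{eq:R-factorization} gives $R_{N,1}^{(0)}$ as an explicit algebraic expression in $N$, with $1-R_{N,1}^{(0)}=1/N+O(N^{-2})$.

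For $\mathcal G$ I need a lower bound complementary to \eqref{eq:G-sqrt-bound}, for instance $\mathcal G(x)\ge\sqrt{x}\,(1-1/(16x))$ or $\mathcal G(x)^2\ge x-1/8$. I would obtain it from the integral formula \eqref{eq:g-bounds} for $\mathfrak g=\mathcal G'/\mathcal G$ combined with $\mathcal G(x)/\sqrt x\to1$: integrating $\mathfrak g(t)-1/(2t)<1/(16t^2)$ from $x$ to $\infty$ gives $\log(\mathcal G(x)/\sqrt x)>-1/(16x)$. Then $\mathcal A_{N,1}^{(0)}(1-R_{N,1}^{(0)})$ behaves like $1/N^2$, and the goal is the clean inequality
\[
 \frac{4}{15\pi}\,\mathcal A_{N,1}^{(0)}\bigl(1-R_{N,1}^{(0)}\bigr)\ge\frac{1}{8\pi N(N+1)}
 =\frac{\sqrt{N(N+1)}}{8\pi[N(N+1)]^{3/2}}
 \qquad(N\ge3).
\]
The leading constants are $4/(15\pi)\approx0.085$ against $1/(8\pi)\approx0.040$, so there is roughly a factor two of room. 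The inequality should reduce to a polynomial inequality in $N$, verified directly for $N\ge3$. Subtracting the Abreu--Patil bound \eqref{eq:LUE-square-upper} for $\Gamma_{N,0}$ then gives \eqref{eq:square-explicit-reserve}.

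For the consequence with $1/160$, note that when $N\ge3$ the right side of \eqref{eq:square-explicit-reserve} is about $1/(8\pi N^2)\approx1/(25N^2)$. So it suffices to prove
\[
 \sqrt{N(N+1)}-\mathsf H_N-(6\log2-10/3)\ge\frac{8\pi[N(N+1)]^{3/2}}{160N^2}.
\]
Using $\mathsf H_N\le1+\log N$ and $\sqrt{N(N+1)}\ge N+1/2-1/(8N)$, this holds for all $N$ beyond a small explicit threshold, and the finitely many remaining $N$ are checked numerically. The cases $N=1,2$ lie outside \eqref{eq:A-N-gap-positive}. For them I would compute $\Gamma_{N,0}$ and $\Xi_N^{(0)}-\Xi_{N+1}^{(0)}$ directly, either from the explicit LUE/LOE densities at $N\le3$ or from the rapidly convergent diagonal series, and compare with $1/(160N^2)$.

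The main obstacle is the explicit lower bound on $\mathcal A_{N,1}^{(0)}(1-R_{N,1}^{(0)})$ with constants sharp enough to survive down to $N=3$. At small $N$ the factor-two margin is eroded by the lower-order terms in $\mathcal G$, in $1-R$, and in $\mathsf H_N$. If the first diagonal alone is too weak at $N=3,4$, I would fall back on the exact values there, or keep the $u=2$ diagonal as well.
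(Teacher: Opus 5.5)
Your proposal is correct and is essentially the paper's argument: keep only the $u=1$ diagonal with $K\ge\varkappa_2=4/(15\pi)$, bound the explicit gap $\mathcal A_{N,1}^{(0)}-\mathcal A_{N+1,1}^{(0)}$ below by $15/(32N(N+1))$ for $N\ge3$, subtract the Abreu--Patil bound, and handle the smallest dimensions by direct computation. The remaining differences are cosmetic. The paper gets $\mathcal G(N/2)/\sqrt{N/2}>15/16$ from the monotonicity of $\mathcal G(x)/\sqrt x$, uses the ratio bound $R_{N,1}^{(0)}<1-1/(2N)$, and treats $N\ge4$ in the $1/160$ step by the induction $\mathsf H_N+6\log2-10/3<3N/4$. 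You instead integrate the upper bound on $\mathfrak g$, reduce to a polynomial inequality, and use $\mathsf H_N\le1+\log N$ with a finite check; that check must include $N=3$, where the margin is thin but positive.
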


\begin{proof}
Set
\[
 I_N:=\Xi_N^{(0)}-\Xi_{N+1}^{(0)}-\Gamma_{N,0}.
\]
\smallskip\noindent\emph{The first diagonal.}
For $N\ge3$, the coefficient gap in
\eqref{eq:A-N-gap-positive} and the monotonicity of the positive kernel give
\[
 \Xi_N^{(0)}-\Xi_{N+1}^{(0)}
 >\bigl(\mathcal A_{N,1}^{(0)}-\mathcal A_{N+1,1}^{(0)}\bigr)\varkappa_2.
\]
With $\mathcal G$ from Lemma~\ref{lem:diagonal-coefficient-size}, put
$\mathfrak q(x):=\mathcal G(x)/\sqrt x$.  Formula \eqref{eq:A-Pochhammer} gives
\[
 \mathcal A_{N,1}^{(0)}=\frac{\mathfrak q(N/2)}{N+1}.
\]
By \eqref{eq:g-bounds},
\[
 \frac{\dd}{\dd x}\log\mathfrak q(x)
 =\mathfrak g(x)-\frac1{2x}>0,
\]
so $\mathfrak q$ is strictly increasing.  Since
$\mathfrak q(3/2)=(15/16)\sqrt{\pi/3}>15/16$,
\[
 \mathcal A_{N,1}^{(0)}>\frac{15}{16(N+1)},\qquad N\ge3.
\]
Moreover,
\[
 \frac{\mathcal A_{N+1,1}^{(0)}}{\mathcal A_{N,1}^{(0)}}
 =\frac{(2N+1)\sqrt{N+1}}{2\sqrt N(N+2)}
 <1-\frac1{2N},
\]
because, after squaring, the last inequality is equivalent to
$4N^3-4N^2-13N+4>0$, which is positive for $N\ge3$.  We obtain
\begin{equation}\label{eq:Xi-square-lower}
 \Xi_N^{(0)}-\Xi_{N+1}^{(0)}
 >\frac1{8\pi N(N+1)},\qquad N\ge3.
\end{equation}

\smallskip\noindent\emph{From the pointwise bound to a uniform reserve.}
Put $c_0:=6\log2-10/3$.  Combining the Abreu--Patil upper bound
\eqref{eq:LUE-square-upper} with \eqref{eq:Xi-square-lower} proves the
stronger pointwise estimate \eqref{eq:square-explicit-reserve}.
We use the elementary bound $\log2<7/10$, hence $c_0<13/15$.  Since
$\mathsf H_4=25/12$, we have
$\mathsf H_4+c_0<59/20<3$, and induction gives
\[
 \mathsf H_N+c_0<\frac{3N}{4},\qquad N\ge4,
\]
since the left-hand side increases by $1/(N+1)$ while the right-hand side
increases by $3/4$.  Substitution in \eqref{eq:square-explicit-reserve}
gives
\begin{align*}
 I_N
 &>\frac{1}{32\pi\sqrt N\,(N+1)^{3/2}}
 =\frac1{32\pi N^2}\left(\frac{N}{N+1}\right)^{3/2}\\
 &\ge \frac1{32\pi N^2}\left(\frac45\right)^{3/2}
 >\frac1{160N^2},\qquad N\ge4,
\end{align*}
where the last inequality follows, for instance, from
$(4/5)^{3/2}>7/10$ and $\pi<22/7$.
The remaining dimensions $N=1,2,3$ are verified in
Appendix~\ref{app:finite-checks}.  The estimate therefore holds in every
dimension.  The stronger pointwise bound \eqref{eq:square-explicit-reserve}
has the asymptotic size $(8\pi)^{-1}N^{-2}$; thus the first diagonal alone has
a strictly positive order-$N^{-2}$ margin over the entire square unitary
decrement.
\end{proof}

\begin{proof}[Proof of Theorem~\ref{thm:square-real}]
Proposition~\ref{prop:square-one-diagonal-comparison} gives
\[
 \Xi_N^{(0)}-\Xi_{N+1}^{(0)}-\Gamma_{N,0}>\frac1{160N^2}.
\]
By \eqref{eq:real-decrement-general}, the left-hand side is exactly
$\alpha_{\mathbb R}^{(0)}(N+1)-\alpha_{\mathbb R}^{(0)}(N)$.  The convergence
to $8/(3\pi)$ follows from Proposition~\ref{prop:real-shape-basics}.
\end{proof}

\subsection{The rectangular regime}

For $\lambda\ge1$ one diagonal is no longer enough.  We instead bound the
whole correction loss uniformly in this regime.  Since the unitary lower
bound increases with $\lambda$, the final comparison can then be made at the
single value $\lambda=1$.  Put
\[
 \mathcal B_{\mathrm{even}}:=\sum_{u=1}^{\infty}b_{2u}
 =1-\frac1{\sqrt2}.
\]

\begin{lemma}[Uniform correction loss]
\label{lem:uniform-correction-loss}
For every real $\lambda\ge1$ and every $N\ge2$, we have
\begin{equation}\label{eq:uniform-Xi-parity-free}
 \Xi_N^{(\lambda)}-\Xi_{N+1}^{(\lambda)}
 <
 \frac{\sqrt{1+1/N}\,\mathcal B_{\mathrm{even}}}{(N+1)(N+2)}
 +\frac1{4\pi N^2}.
\end{equation}
\end{lemma}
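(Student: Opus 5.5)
Write the one-step loss using the parity-free representation
\[
 \Xi_N^{(\lambda)}-\Xi_{N+1}^{(\lambda)}
 =\sum_{u\ge1}\Bigl(\mathcal A_{N,u}^{(\lambda)}-\mathcal A_{N+1,u}^{(\lambda)}\Bigr)K_{N-1,u}^{(\lambda)}
 +\sum_{u\ge1}\mathcal A_{N+1,u}^{(\lambda)}\Bigl(\mathcal E_{N-1,2u}^{(\lambda)}-\mathcal E_{N,2u}^{(\lambda)}\Bigr),
\]
which splits the loss into a \emph{coefficient loss} and a \emph{kernel loss}. We bound the two sums separately, and the two bounds produce the two terms on the right of \eqref{eq:uniform-Xi-parity-free}.

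\textbf{Coefficient loss.} Replace each coefficient gap by its positive part and apply \eqref{eq:positive-loss-dominated}, which is valid for $N\ge2$ and $\lambda\ge1$. The first sum is then at most
\[
 \sum_u\bigl(\mathcal A_{N,u}^{(1)}-\mathcal A_{N+1,u}^{(1)}\bigr)K_{N-1,u}^{(\lambda)}.
\]
Next bound the kernel crudely by $K_{N-1,u}^{(\lambda)}\le b_{2u}$. This follows from \eqref{eq:E-lambda} together with \eqref{eq:b-convolution}, since $K=b_{2u}-\sum_t\mathfrak p_{r,t}b_tb_{t+2u}$. It remains to show
\[
 \mathcal A_{N,u}^{(1)}-\mathcal A_{N+1,u}^{(1)}\le\frac{\sqrt{1+1/N}}{(N+1)(N+2)}
 \quad\text{uniformly in }u,
\]
after which summing $b_{2u}$ gives $\mathcal B_{\mathrm{even}}$. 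At $\lambda=1$ everything is explicit. By \eqref{eq:A-prefactor-global}, $\mathcal A_{N,u}^{(1)}\le\frac{\sqrt{1+1/N}}N\frac{N}{N+2u}$, using the telescoping $(N/2)_u/((N+2)/2)_u=N/(N+2u)$. By the closed form of $R_{N,u}^{(1)}$,
\[
 1-R^{(1)}_{N,u}=1-\sqrt{\tfrac N{N+1}}\,\tfrac{2N+3}{2N+2}\,\tfrac{N+2u}{N+2u+1}.
\]
The product $\mathcal A^{(1)}_{N,u}(1-R^{(1)}_{N,u})$ behaves like $\frac{1}{N+2u}\bigl(\frac{1}{N+2u+1}+O(N^{-2})\bigr)$, so it is largest at $u=1$. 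There it is essentially $\frac{1}{(N+2)(N+3)}$ plus a small correction. I expect this to be the delicate arithmetic step: the target $1/((N+1)(N+2))$ leaves only a margin of order $N^{-3}$, and the $O(N^{-2})$ term from $1-\sqrt{N/(N+1)}\,(2N+3)/(2N+2)=(3N+4)/(\cdots)$ must be absorbed. I would first try the exact $\mathcal G$ instead of the bound $\mathcal G\le\sqrt x$, and, if necessary, write $1-R$ as
\[
 \Bigl[1-\tfrac{N+2u}{N+2u+1}\Bigr]+\tfrac{N+2u}{N+2u+1}\bigl[1-\sqrt{\cdot}\,\cdot\bigr].
\]
The first bracket gives exactly $1/((N+2u)(N+2u+1))\le1/((N+2)(N+3))$. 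The second is $O(N^{-2})\cdot\frac{1}{N+2u}$, and the gap $\frac1{(N+1)(N+2)}-\frac1{(N+2)(N+3)}=\frac{2}{(N+1)(N+2)(N+3)}$ should absorb it.

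\textbf{Kernel loss.} Apply \eqref{eq:E-uniform-difference} with $r=N-1$ and $d=2u$:
\[
 \mathcal E_{N-1,2u}-\mathcal E_{N,2u}\le\frac{\vartheta_\lambda}{N(N+\vartheta_\lambda)}\theta_{2u}\le\frac{1}{N}\cdot\frac{1}{4\pi u},
\]
since $\vartheta_\lambda/(N+\vartheta_\lambda)<1$. Then use \eqref{eq:A-prefactor-global} for $\mathcal A_{N+1,u}^{(\lambda)}$ and the digamma bound \eqref{eq:digamma-sum} with $c=(N+1)/2$ and $\omega=(\lambda+1)/2\ge1$:
\[
 \sum_u\frac{\mathcal A_{N+1,u}^{(\lambda)}}u
 \le\frac{\sqrt{1+\lambda/(N+1)}}{N+1}\cdot\frac{N+1}{\lambda+1}
 =\frac{\sqrt{1+\lambda/(N+1)}}{\lambda+1}\le1.
\]
The last inequality holds because $1+\lambda/(N+1)\le(1+\lambda)^2$. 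The second sum is therefore at most $\frac1{4\pi N}$ times a factor of $\frac{\vartheta_\lambda}{N+\vartheta_\lambda}$. To reach the claimed $\frac1{4\pi N^2}$ instead of $\frac1{4\pi N}$, keep the full factor $\frac{\vartheta_\lambda}{N(N+\vartheta_\lambda)}$ and pair it with $\frac{\sqrt{1+\lambda/(N+1)}}{\lambda+1}$. The product
\[
 \frac{(\lambda+1/2)\sqrt{1+\lambda/(N+1)}}{(\lambda+1)(N+\lambda+1/2)}
\]
is at most $1/N$ for all $\lambda\ge1$, which is an elementary one-variable check. Adding the two bounds gives \eqref{eq:uniform-Xi-parity-free}.
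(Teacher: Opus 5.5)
Your proposal is correct and follows the paper's proof: the same coefficient/kernel split, the bound $K_{N-1,u}^{(\lambda)}\le b_{2u}$ combined with \eqref{eq:positive-loss-dominated}, and, for the kernel loss, the same chain \eqref{eq:E-uniform-difference}, $\theta_{2u}<1/(4\pi u)$, \eqref{eq:A-prefactor-global}, \eqref{eq:digamma-sum}, ending in the identical check $\frac{\vartheta_\lambda}{\lambda+1}\frac{N}{N+\vartheta_\lambda}\sqrt{\frac{N+\lambda+1}{N+1}}<1$. The step you flagged as delicate is not delicate: the lower bound $R_{N,u}^{(1)}>N/(N+1)$ in \eqref{eq:R-shape-one} gives $1-R_{N,u}^{(1)}<1/(N+1)$, hence a coefficient gap below $\sqrt{1+1/N}/((N+1)(N+2u))$ at once, which is exactly what the paper uses (your finer split also works, since it reduces to $1-\sqrt{N/(N+1)}\,\frac{2N+3}{2N+2}\le\frac{3N+4}{4(N+1)^3}\le\frac{2}{(N+1)(N+2)}$).
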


\begin{proof}
From the positive-kernel representation,
\[
 0<K_{r,u}^{(\lambda)}
 =b_{2u}-\sum_{t=1}^{r}\mathfrak p_{r,t}^{(\lambda)}b_tb_{t+2u}
 \le b_{2u}.
\]
Split the correction loss as
\begin{align*}
 \Xi_N^{(\lambda)}-\Xi_{N+1}^{(\lambda)}
 ={}&\sum_{u\ge1}
 \left(\mathcal A_{N,u}^{(\lambda)}-\mathcal A_{N+1,u}^{(\lambda)}\right)
 K_{N-1,u}^{(\lambda)}\\
 &+\sum_{u\ge1}\mathcal A_{N+1,u}^{(\lambda)}
 \left(K_{N-1,u}^{(\lambda)}-K_{N,u}^{(\lambda)}\right).
\end{align*}

\smallskip\noindent\emph{Coefficient loss.}
By \eqref{eq:R-shape-one}, for every $N\ge2$,
\[
 0<\mathcal A_{N,u}^{(1)}-\mathcal A_{N+1,u}^{(1)}
 <\frac{\mathcal A_{N,u}^{(1)}}{N+1}.
\]
Together with \eqref{eq:positive-loss-dominated}, this controls the positive
coefficient loss for every real $\lambda\ge1$ and every $N\ge2$.
Moreover, \eqref{eq:A-Pochhammer} and \eqref{eq:G-sqrt-bound} yield
\[
 \mathcal A_{N,u}^{(1)}
 \le\frac{\sqrt{1+1/N}}{N+2u}.
\]
The first sum is therefore bounded by
\[
 \frac{\sqrt{1+1/N}}{N+1}
 \sum_{u\ge1}\frac{b_{2u}}{N+2u}
 \le
 \frac{\sqrt{1+1/N}\,\mathcal B_{\mathrm{even}}}{(N+1)(N+2)}.
\]

\smallskip\noindent\emph{Kernel variation.}
For the second sum put
$c=(N+1)/2$ and $\omega=(\lambda+1)/2$, and recall
$\vartheta_\lambda=\lambda+1/2$.
Lemmas~\ref{lem:kernel-variation}, \ref{lem:b-identities}, and
\ref{lem:diagonal-coefficient-size} give the upper bound
\[
 \frac{\sqrt{N+\lambda+1}}{(N+1)^{3/2}}
 \frac{\vartheta_\lambda}{N(N+\vartheta_\lambda)}
 \frac1{4\pi}
 \sum_{u\ge1}\frac{(c)_u}{(c+\omega)_u}\frac1u.
\]
Using \eqref{eq:digamma-sum}, it is enough to check
\[
 \frac{\vartheta_\lambda}{\lambda+1}
 \frac{N}{N+\vartheta_\lambda}\sqrt{\frac{N+\lambda+1}{N+1}}<1.
\]
The first factor is strictly smaller than one, while the remaining two are
controlled by
\[
 (N+\vartheta_\lambda)^2(N+1)-N^2(N+\lambda+1)>0.
\]
Together the coefficient and kernel estimates give
\eqref{eq:uniform-Xi-parity-free}.
\end{proof}

\begin{proof}[Proof of Theorem~\ref{thm:main}]
\smallskip\noindent\emph{Dimensions $N\ge2$.}
The right-hand side of \eqref{eq:uniform-Xi-parity-free} is independent of
$\lambda$, while the unitary lower bound \eqref{eq:LUE-shape-reserve} is
increasing in $\lambda$.  It is therefore enough to compare at $\lambda=1$.
For $N\ge3$,
\[
 \sum_{k=1}^N\frac{k}{k+1}
 \ge \frac12+\frac23+(N-2)\frac34
 =\frac{9N-4}{12},
\]
and $(1+1/N)^{3/2}<1+2/N$.  Hence
\begin{equation}\label{eq:R1-parity-free-lower}
 N^2\mathcal R_{N,1}
 >\frac{9N-4}{6\pi(N+2)}.
\end{equation}
On the correction side, $\mathcal B_{\mathrm{even}}<3/10$ and
$\sqrt{1+1/N}<1+1/(2N)$ give
\[
 N^2\left(\Xi_N^{(\lambda)}-\Xi_{N+1}^{(\lambda)}\right)
 <\frac{3N(2N+1)}{20(N+1)(N+2)}+\frac1{4\pi}.
\]
Using $\pi<22/7$, the difference between the lower bound in
\eqref{eq:R1-parity-free-lower} and this upper bound, after multiplication
by $\pi$, is positive because it reduces to
\[
 \frac{129N^2-163N-490}{420(N+1)(N+2)}>0,
 \qquad N\ge3.
\]

The case $N=2$ needs a separate numerical comparison.  Lemma~\ref{lem:uniform-correction-loss}
gives
\[
 \Xi_2^{(\lambda)}-\Xi_3^{(\lambda)}
 <\frac{\sqrt{3/2}\,\mathcal B_{\mathrm{even}}}{12}+\frac1{16\pi}.
\]
At $\lambda=1$ the unitary lower bound is
\[
 \mathcal R_{2,1}=\frac7{18\pi\sqrt6}.
\]
Since $\mathcal B_{\mathrm{even}}=1-1/\sqrt2<5/17$, together with
$\sqrt{3/2}<49/40$, $\pi<22/7$, and $\sqrt6<49/20$, we get
\[
 \pi\left(\frac{\sqrt{3/2}\,\mathcal B_{\mathrm{even}}}{12}+\frac1{16\pi}\right)
 <\frac8{51}<\frac{10}{63}<\frac7{18\sqrt6}.
\]
The unitary lower bound also dominates the correction loss for $N=2$.
Equation~\eqref{eq:real-decrement-general} now gives
$\DReal_N(\lambda)>0$ for every $N\ge2$ and every $\lambda\ge1$.

\smallskip\noindent\emph{Dimension $N=1$.}
The remaining case $N=1$ has a simpler direct proof, so there is no reason to
force the diagonal estimates down to the smallest dimension.  Put
\[
 x:=\frac{\lambda+1}{2}\ge1.
\]
The one-dimensional $\beta=1$ Laguerre density is a gamma density, so
\[
 \alpha_{\mathbb R}^{(\lambda)}(1)
 =\sqrt2\,\frac{\Gamma(x+1/2)}{\Gamma(x)}.
\]
The homogeneity of \eqref{eq:LOE-jpd} under $x_i=T y_i$ shows that
$T=\sum_i x_i$ has gamma law with shape $N(N+\lambda)/2$ and scale $2$.
For $N=2$ this gives the normalized one-point first moment
$\int t\,p_{2,\lambda}^{\mathbb R}(t)\dd t=\lambda+2$; Jensen's inequality therefore gives
\[
 \alpha_{\mathbb R}^{(\lambda)}(2)
 \le\sqrt{\frac{\lambda+2}{2}}
 =\sqrt{x+\frac12}.
\]
The comparison reduces to the elementary Wallis-ratio bound
\begin{equation}\label{eq:gamma-quarter-lower}
 \frac{\Gamma(x+1/2)}{\Gamma(x)}>\sqrt{x-\frac14},
 \qquad x>\frac14.
\end{equation}
To verify this bound, define
\[
 \rho(x):=\log\frac{\Gamma(x+1/2)}{\Gamma(x)}
 -\frac12\log\left(x-\frac14\right).
\]
The standard integral representation of the digamma difference gives
\begin{align*}
 \rho'(x)
 &=\int_0^\infty \e^{-xt}
 \left(\frac1{1+\e^{-t/2}}-\frac12\e^{t/4}\right)\dd t<0,
\end{align*}
 because
$\frac12(\e^{t/4}+\e^{-t/4})>1$ for $t>0$.  Stirling's formula gives
$\rho(x)\to0$ as $x\to\infty$, so \eqref{eq:gamma-quarter-lower} follows.  For
$x\ge1$ we may therefore write
\[
 \alpha_{\mathbb R}^{(\lambda)}(1)
 >\sqrt{2x-\frac12}
 \ge\sqrt{x+\frac12}
 \ge\alpha_{\mathbb R}^{(\lambda)}(2),
\]
where the middle inequality is an equality only at $x=1$ and the first one
is always strict.  Thus $\DReal_1(\lambda)>0$ for every real
$\lambda\ge1$.  The convergence to $8/(3\pi)$ is
Proposition~\ref{prop:real-shape-basics}.
\end{proof}

\subsection{Monotonicity across the transition strip}

The endpoint results show that the real decrement changes sign somewhere in
$0<\lambda<1$.  Uniqueness will follow if its shape derivative stays
positive.  We prove this by comparing the derivative of the orthogonal loss
with the explicit positive derivative reserve from the complex problem.  Put
\[
 \mathcal X_N(\lambda):=\Xi_N^{(\lambda)}-\Xi_{N+1}^{(\lambda)}.
\]
Then
\[
 \partial_\lambda\DReal_N(\lambda)
 =
 \partial_\lambda\Gamma_{N,\lambda}-\mathcal X_N'(\lambda).
\]
Whenever no argument is displayed below, a prime denotes
$\partial_\lambda$ at the current value of $\lambda$.
The diagonal series can be differentiated term by term on compact shape
intervals.  For
fixed $N$ and $0\le\lambda\le\Lambda$, the Pochhammer representation and
\eqref{eq:A-lambda-log-derivative} give
\[
 |\partial_\lambda \mathcal A_{N,u}^{(\lambda)}|
 \le C_{N,\Lambda}(1+\log(u+1))\mathcal A_{N,u}^{(\lambda)},
\]
while the finite-product kernel formula gives
\[
 K_{N-1,u}^{(\lambda)}+
 |\partial_\lambda K_{N-1,u}^{(\lambda)}|
 \le C_{N,\Lambda}b_{2u}.
\]
Since $\sum_{u\ge1}(1+\log(u+1))b_{2u}<\infty$, the diagonal series and
its first parameter derivative converge normally on compact shape intervals.

After differentiation the individual diagonal terms no longer have a common
sign, so a termwise sign argument for $\mathcal X_N'$ is unavailable.  None
is needed: negative terms only help, and it suffices to control the terms that
can contribute positively and compare them with the explicit reserve for
$\partial_\lambda\Gamma_{N,\lambda}$.

Lemmas~\ref{lem:diagonal-coefficient-size}
and~\ref{lem:diagonal-adjacent} supply all coefficient estimates needed
below: one controls size and the shape derivative, the other the
adjacent-dimension gap.  Only the differentiated kernel needs one additional
tail estimate.
Put
\[
 \mathcal B_t:=\sum_{u\ge1}b_{t+2u}.
\]

\begin{lemma}[Tail estimate for the connection coefficients]
\label{lem:bB-tail}
For every $t\ge1$,
\begin{equation}\label{eq:bB-tail}
 b_t\mathcal B_t
 \le\frac1{4\pi(t^2-1/4)}
 \le\frac1{3\pi t^2}.
\end{equation}
\end{lemma}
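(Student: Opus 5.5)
We need $b_t\mathcal B_t\le 1/(4\pi(t^2-1/4))$, where $\mathcal B_t=\sum_{u\ge1}b_{t+2u}$. The plan is to use the beta-integral representation from the proof of Lemma~\ref{lem:b-identities},
\[
 b_n=\frac1\pi\int_0^1x^{n-3/2}(1-x)^{1/2}\dd x
 =\frac1\pi B\!\left(n-\tfrac12,\tfrac32\right).
\]
By Tonelli, the even tail sums to
\[
 \mathcal B_t=\frac1\pi\int_0^1 x^{t-3/2}(1-x)^{1/2}\frac{x^2}{1-x^2}\dd x
 =\frac1\pi\int_0^1 \frac{x^{t+1/2}}{(1-x)^{1/2}(1+x)}\dd x.
\]
Using $1/(1+x)\le1/2$ near $x=1$ would only give something like $\frac1{2\pi}B(t+\tfrac32,\tfrac12)$, which is slightly too weak. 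We therefore keep the factor exactly, or bound $1/(1+x)\le 1/(2\sqrt x)$ (AM--GM). That yields
\[
 \mathcal B_t\le\frac1{2\pi}B\!\left(t+1,\tfrac12\right).
\]

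Next we form the product:
\[
 b_t\mathcal B_t\le\frac1{2\pi^2}\,
 B\!\left(t-\tfrac12,\tfrac32\right)B\!\left(t+1,\tfrac12\right)
 =\frac1{2\pi^2}\cdot\frac{\Gamma(t-\tfrac12)\tfrac{\sqrt\pi}{2}}{\Gamma(t+1)}\cdot\frac{\Gamma(t+1)\sqrt\pi}{\Gamma(t+\tfrac32)}.
\]
The $\Gamma(t+1)$ factors cancel, so this equals
\[
 \frac{1}{4\pi}\frac{\Gamma(t-\tfrac12)}{\Gamma(t+\tfrac32)}
 =\frac1{4\pi(t-\tfrac12)(t+\tfrac12)}
 =\frac1{4\pi(t^2-1/4)}.
\]
This is exactly the first inequality, and the AM--GM choice is what makes the Gamma ratio telescope.

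The second inequality, $4(t^2-1/4)\ge 3t^2$, is equivalent to $t^2\ge1$. It therefore holds for every $t\ge1$.

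The main point to double-check is the first step: the AM--GM bound $1+x\ge2\sqrt x$ and the geometric summation $\sum_{u\ge1}x^{2u}=x^2/(1-x^2)$. These give the exponent $t+1/2-1/2=t$, hence $B(t+1,1/2)$, and the cancellation above. All other steps are routine Beta–Gamma algebra.
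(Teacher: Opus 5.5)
Your proof is correct and is essentially the paper's argument. You use the same beta-integral summation for $\mathcal B_t$, the same bound $1+x\ge2\sqrt x$ giving $\mathcal B_t\le\Gamma(t+1)/(2\sqrt\pi\,\Gamma(t+3/2))$, and the same cancellation of $\Gamma(t+1)$ against $b_t=\Gamma(t-1/2)/(2\sqrt\pi\,\Gamma(t+1))$; only your discarded aside is off, since $1/(1+x)\ge1/2$ on $[0,1]$, so that route gives no upper bound at all, but the argument you actually use is unaffected.
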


\begin{proof}
The beta-integral representation
\[
 b_n=\frac1\pi\int_0^1x^{n-3/2}(1-x)^{1/2}\,\dd x
\]
gives
\[
 \mathcal B_t
 =
 \frac1\pi\int_0^1
 \frac{x^{t+1/2}}{\sqrt{1-x}(1+x)}\,\dd x.
\]
Since $1+x\ge2\sqrt x$,
\[
 \mathcal B_t
 \le
 \frac{\Gamma(t+1)}{2\sqrt\pi\,\Gamma(t+3/2)}.
\]
Multiplication by
$b_t=\Gamma(t-1/2)/(2\sqrt\pi\,\Gamma(t+1))$
gives the first bound in \eqref{eq:bB-tail}; the second is equivalent to
$t^2\ge1$.
\end{proof}

\begin{lemma}[Derivative bound for the orthogonal loss]
\label{lem:Xi-loss-derivative-upper}
For $N\ge3$ and $0\le\lambda\le1$,
\begin{equation}\label{eq:Xi-derivative-simple-upper}
 \mathcal X_N'(\lambda)
 <\frac{\sqrt{1+1/N}}{4\pi N(N+1)}
 +\frac{\sqrt{1+1/(N+1)}}{3\pi(N+1)^2}.
\end{equation}
\end{lemma}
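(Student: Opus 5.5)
We differentiate the split of the correction loss used in Lemma~\ref{lem:uniform-correction-loss}, namely
\[
 \mathcal X_N(\lambda)
 =\sum_{u\ge1}\bigl(\mathcal A_{N,u}^{(\lambda)}-\mathcal A_{N+1,u}^{(\lambda)}\bigr)K_{N-1,u}^{(\lambda)}
 +\sum_{u\ge1}\mathcal A_{N+1,u}^{(\lambda)}\bigl(K_{N-1,u}^{(\lambda)}-K_{N,u}^{(\lambda)}\bigr).
\]
Termwise differentiation is justified by the normal convergence already established. The derivative splits into four pieces:
\begin{enumerate}
\item $\sum_u(\mathcal A_{N,u}-\mathcal A_{N+1,u})'K_{N-1,u}$;
\item $\sum_u(\mathcal A_{N,u}-\mathcal A_{N+1,u})K_{N-1,u}'$;
\item $\sum_u\mathcal A_{N+1,u}'(K_{N-1,u}-K_{N,u})$;
\item $\sum_u\mathcal A_{N+1,u}(K_{N-1,u}-K_{N,u})'$.
\end{enumerate}
The point is to discard the negative pieces and bound the remaining two by the two terms of \eqref{eq:Xi-derivative-simple-upper}.

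\textbf{The two negative pieces.} Piece (1) is negative for $N\ge3$: \eqref{eq:A-gap-lambda-negative} makes every coefficient-gap derivative negative, and $K>0$. Piece (3) is nonpositive: \eqref{eq:A-lambda-negative} gives $\mathcal A_{N+1,u}'<0$, and Proposition~\ref{prop:kernel-lambda} gives $K_{N-1,u}-K_{N,u}=\mathcal E_{N-1,2u}-\mathcal E_{N,2u}\ge0$. Both can be dropped.

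\textbf{Piece (2), the first term.} We use \eqref{eq:A-N-gap-positive} to get $0<\mathcal A_{N,u}-\mathcal A_{N+1,u}<\mathcal A_{N,u}/(N+1)$. Next, \eqref{eq:kernel-parameter-derivative} with $r=N-1$ gives $0\le K_{N-1,u}'\le 1/(4\pi Nu)$. For the coefficient we use \eqref{eq:A-prefactor-strip} with $c=N/2$, $\omega=(\lambda+1)/2$, and then the digamma bound \eqref{eq:digamma-sum}:
\[
 \sum_u \frac{(c)_u}{(c+\omega)_u}\frac1u\le \frac{c}{\omega}=\frac{N}{\lambda+1}\le N.
\]
Collecting these,
\[
 (2)\le \frac{\sqrt{1+1/N}}{N(N+1)}\cdot\frac{1}{4\pi N}\cdot N
 =\frac{\sqrt{1+1/N}}{4\pi N(N+1)}.
\]
This is exactly the first term. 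Strict inequality comes from $\omega>1/2$ or from the strict gap bound.

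\textbf{Piece (4), the second term.} Write $K_{N-1,u}-K_{N,u}=\sum_{t=1}^{N}(\mathfrak p_{N,t}-\mathfrak p_{N-1,t})b_tb_{t+2u}$. Its $\lambda$-derivative needs a sup-over-$u$-free bound that can be summed over $u$ against $\mathcal A_{N+1,u}$. Since the differentiated kernel difference has no sign, we will bound its positive part only. The plan has three steps.
\begin{enumerate}
\item Bound $\mathcal A_{N+1,u}\le \sqrt{1+1/(N+1)}/(N+1)$ uniformly in $u$, using \eqref{eq:A-prefactor-strip} and the fact that the Pochhammer ratio is at most $1$.
\item Exchange the sums over $u$ and $t$, so that $\sum_u b_{t+2u}=\mathcal B_t$ appears.
\item Show that $\bigl|\partial_\lambda(\mathfrak p_{N,t}-\mathfrak p_{N-1,t})\bigr|$ is at most about $t^2/(N+1)$, or something summable against $b_t\mathcal B_t\le 1/(3\pi t^2)$ from Lemma~\ref{lem:bB-tail}, so that the $t$-sum contributes at most $1/(3\pi(N+1))$.
\end{enumerate}
Concretely, \eqref{eq:p-r-step} gives
\[
 \mathfrak p_{N,t}-\mathfrak p_{N-1,t}=\mathfrak p_{N-1,t}\,\frac{\vartheta t}{(N-t)(N+\vartheta)}.
\]
We differentiate this product in $\vartheta$. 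The factor $\vartheta/(N+\vartheta)$ has derivative $N/(N+\vartheta)^2$, and differentiating $\mathfrak p_{N-1,t}$ only produces negative contributions, which can be dropped for an upper bound. What remains is
\[
 \mathfrak p_{N-1,t}\,\frac{tN}{(N-t)(N+\vartheta)^2}.
\]
Using $\mathfrak p_{N-1,t}\le\bigl((N-t)/N\bigr)^{1/2}$, from the $\vartheta=1/2$ product estimate in Lemma~\ref{lem:kernel-variation}, this is controlled for $t$ away from $N$.

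\textbf{Main obstacle.} The main obstacle is the endpoint region $t$ close to $N$, where $(N-t)^{-1/2}$ blows up, together with the new term $t=N$. For the endpoint region we plan to use the square-root decay of $\mathfrak p$ together with $b_t\mathcal B_t=O(t^{-2})$: the sum $\sum_t t^{-1}(N-t)^{-1/2}$ should still give $O(N^{-1})$, with enough constant room. For the term $t=N$ we use its explicit product $\prod_k k/(k+\vartheta)$. If the constant $1/(3\pi)$ proves too tight, the fallback is to absorb part of this piece into the slack left in piece (2).
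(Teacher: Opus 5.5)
Your pieces (1)--(3), your step~1 for piece (4), and your pointwise estimate
\[
 \bigl(\partial_\vartheta(\mathfrak p_{N,t}-\mathfrak p_{N-1,t})\bigr)_+
 \le\mathfrak p_{N-1,t}\,\frac{tN}{(N-t)(N+\vartheta)^2},
 \qquad 1\le t\le N-1,
\]
all coincide with the paper's proof. The gap is that piece (4) is never closed, and the route you sketch for closing it is partly wrong.

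\begin{itemize}
\item Step~3 as stated would fail. A per-$t$ bound of order $t^2/(N+1)$, summed against $b_t\mathcal B_t\le1/(3\pi t^2)$ over $1\le t\le N$, gives $N/(3\pi(N+1))$. That is a factor $N$ too large.
\item The claim $\sum_{t<N}t^{-1}(N-t)^{-1/2}=O(N^{-1})$ is false. The sum is of order $(\log N)/\sqrt N$; only the prefactor $\sqrt N/(N+\vartheta)^2$ makes your bound small.
\item The lemma is an explicit inequality for every $N\ge3$, so ``enough constant room'' has to be checked, not asserted.
\item The fallback of borrowing slack from piece (2) is not available as stated. Your bound for piece (2) already equals the first term of the target.
\end{itemize}

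The obstacle you identify at $t$ near $N$ does not exist, and no square-root decay of $\mathfrak p$ is needed. Use the crude bound $\mathfrak p_{N-1,t}\le1$ and Lemma~\ref{lem:bB-tail}. The factor $t$ then cancels against $t^{-2}$, and partial fractions give
\[
 \sum_{t=1}^{N-1}\frac{tN}{(N-t)(N+\vartheta)^2}\cdot\frac1{3\pi t^2}
 =\frac1{3\pi(N+\vartheta)^2}\sum_{t=1}^{N-1}\Bigl(\frac1t+\frac1{N-t}\Bigr)
 =\frac{2\mathsf H_{N-1}}{3\pi(N+\vartheta)^2}.
\]
Since $2\mathsf H_{N-1}\le N$ and $N(N+1)<(N+1/2)^2\le(N+\vartheta)^2$, this is smaller than $1/(3\pi(N+1))$. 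Your step~1 then turns it into the second term of the lemma.

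The new index $t=N$ needs no product estimate. The function $\mathfrak p_{N,N}(\vartheta)=\prod_{k=1}^N k/(k+\vartheta)$ is decreasing in $\vartheta$, so it contributes nonpositively and can be dropped.

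Your sharper bound $\mathfrak p_{N-1,t}\le\sqrt{(N-t)/N}$ only decreases each term, so it would also work once this summation is carried out, but it gains nothing here.
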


\begin{proof}
Differentiate the exact loss split and denote the four resulting sums by
$T_1,T_2,T_3,T_4$:
\begin{align*}
 \mathcal X_N'={}&
 \underbrace{\sum_{u\ge1}(\mathcal A_{N,u}'-\mathcal A_{N+1,u}')
 K_{N-1,u}}_{T_1}
 +\underbrace{\sum_{u\ge1}(\mathcal A_{N,u}-\mathcal A_{N+1,u})
 K_{N-1,u}'}_{T_2}\\
 &+\underbrace{\sum_{u\ge1}\mathcal A_{N+1,u}'
 (K_{N-1,u}-K_{N,u})}_{T_3}
 +\underbrace{\sum_{u\ge1}\mathcal A_{N+1,u}
 (K_{N-1,u}'-K_{N,u}')}_{T_4}.
\end{align*}
All quantities are evaluated at the same $\lambda$.  The coefficient estimates in
Lemmas~\ref{lem:diagonal-coefficient-size}
and~\ref{lem:diagonal-adjacent} show at once
that
\[
 T_1\le0,\qquad T_3\le0.
\]
Only $T_2$ and the positive part of $T_4$ can therefore contribute to the
upper bound.

For $T_2$, combine \eqref{eq:A-N-gap-positive},
\eqref{eq:kernel-parameter-derivative}, and
\eqref{eq:A-prefactor-strip}.  With
$c=N/2$ and $\omega=(\lambda+1)/2$,
\[
 \sum_{u\ge1}\frac{(c)_u}{(c+\omega)_u}\frac1u
 \le\frac c\omega=\frac{N}{\lambda+1}\le N
\]
by \eqref{eq:digamma-sum}.  Since
$\theta_{2u}<1/(4\pi u)$, this gives
\[
 T_2<
 \frac{\sqrt{1+1/N}}{4\pi N(N+1)}.
\]

For $T_4$, put $\vartheta=\lambda+1/2$ and
\[
 \Delta \mathfrak p_{N,t}
 :=\mathfrak p_{N,t}(\vartheta)-\mathfrak p_{N-1,t}(\vartheta),
 \qquad 1\le t\le N,
\]
with $\mathfrak p_{N-1,N}(\vartheta)=0$.  For $t<N$,
\[
 \Delta \mathfrak p_{N,t}
 =\mathfrak p_{N-1,t}(\vartheta)
 \frac{\vartheta t}{(N-t)(N+\vartheta)}.
\]
After differentiation, the term containing
$\partial_\vartheta\mathfrak p_{N-1,t}$ is nonpositive, and therefore
\[
 (\Delta \mathfrak p_{N,t}')_+
 \le \mathfrak p_{N-1,t}(\vartheta)
 \frac{Nt}{(N-t)(N+\vartheta)^2}.
\]
The new term $t=N$ is harmless because
$\Delta \mathfrak p_{N,N}'=\mathfrak p_{N,N}'<0$.
Using Lemma~\ref{lem:bB-tail}, $\mathfrak p_{N-1,t}\le1$, and
$\mathsf H_{N-1}\le N/2$, we obtain
\begin{align*}
 T_4
 &\le
 \frac{2\sqrt{1+1/(N+1)}\,\mathsf H_{N-1}}
 {3\pi(N+1)(N+\vartheta)^2}\le
 \frac{\sqrt{1+1/(N+1)}\,N}
 {3\pi(N+1)(N+\vartheta)^2}
 <\frac{\sqrt{1+1/(N+1)}}{3\pi(N+1)^2},
\end{align*}
where the last step uses
$N(N+1)<(N+1/2)^2\le(N+\vartheta)^2$.
Together with $T_1,T_3\le0$, this proves
\eqref{eq:Xi-derivative-simple-upper}.
\end{proof}

Set
\[
 \mathfrak v_N:=
 \frac{2}{\pi[N(N+1)]^{3/2}}
 \sum_{k=1}^{N}\left(\frac{k}{k+1}\right)^2.
\]
By \eqref{eq:LUE-derivative-reserve},
$\partial_\lambda\Gamma_{N,\lambda}>\mathfrak v_N$ throughout the transition
strip.  Thus, the remaining task is to show that the orthogonal derivative
stays below the same number.  The uniform estimate starts at $N=3$; the two
smaller dimensions require separate finite checks.

\begin{proposition}[Derivative comparison for $N\ge3$]
\label{prop:master-derivative-comparison}
For every $N\ge3$ and $0\le\lambda\le1$,
\[
 \mathcal X_N'(\lambda)<\mathfrak v_N.
\]
\end{proposition}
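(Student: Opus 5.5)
The plan is to compare the explicit upper bound of Lemma~\ref{lem:Xi-loss-derivative-upper} with an explicit lower bound for $\mathfrak v_N$, clearing the common factor $1/N^2$. By \eqref{eq:Xi-derivative-simple-upper} it is enough to show
\[
 \frac{\sqrt{1+1/N}}{4\pi N(N+1)}
 +\frac{\sqrt{1+1/(N+1)}}{3\pi(N+1)^2}
 <\frac{2}{\pi[N(N+1)]^{3/2}}
 \sum_{k=1}^{N}\left(\frac{k}{k+1}\right)^2 .
\]
The right-hand side does not depend on $\lambda$, and the left-hand side has already been made $\lambda$-free. So the whole statement reduces to a one-variable inequality in $N\ge3$. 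Multiplying through by $\pi N^2$, the leading orders are $1/4+1/3=7/12$ on the left and roughly $2$ on the right, since the sum is about $N-2\log N$. The margin is therefore large for big $N$, and all the work is in making the estimate explicit down to $N=3$.

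For the sum, I would proceed as in the proof of Theorem~\ref{thm:main}. Bound the first two terms exactly, $(1/2)^2+(2/3)^2=25/36$, and each term with $k\ge3$ below by $(3/4)^2=9/16$. This gives
\[
 \sum_{k=1}^N\left(\frac{k}{k+1}\right)^2\ge\frac{25}{36}+\frac{9(N-2)}{16},
 \qquad N\ge3 .
\]
On the left I would use $\sqrt{1+1/N}\le\sqrt{4/3}$ and $\sqrt{1+1/(N+1)}\le\sqrt{5/4}$, or the cruder $\sqrt{1+1/N}<1+1/(2N)$. I would also use $N(N+1)\ge N^2$ and $(N+1)^2\ge N^2$, so that the left side is at most $(\sqrt{4/3}/4+\sqrt{5/4}/3)/(\pi N^2)<0.66/(\pi N^2)$.

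On the right, $[N(N+1)]^{-3/2}=N^{-3}(1+1/N)^{-3/2}\ge N^{-3}(3/4)^{3/2}$. The right side times $\pi N^2$ is then at least
\[
 2\left(\tfrac34\right)^{3/2}\left(\tfrac{25}{36N}+\tfrac{9(N-2)}{16N}\right).
\]
At $N=3$ this is about $1.30\cdot(0.231+0.1875)\approx0.54$. That is slightly below $0.66$, so the crude bound fails at $N=3$, and this is the step I expect to be the obstacle.

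To handle it, I would keep the exact factors $N(N+1)$ and $(N+1)^2$ on the left and evaluate both sides exactly for the first few $N$, say $N=3,4,5$. At $N=3$ the left side is about $0.0143+0.0074\approx0.0217$ times $1/\pi$ factors appropriately. The right side is $2\cdot(1/4+4/9+9/16)/(12^{3/2}\pi)\approx0.0605$, which is comfortably larger once the common $1/\pi$ is cleared. The true margin is fine; only the uniform bookkeeping loses it. For $N\ge N_0$ I would then use the monotone bounds above, noting that the right-hand expression increases in $N$ after multiplication by $N^2$ while the left decreases. The cases $N<N_0$ are direct rational verifications using $\pi<22/7$ and simple bounds such as $\sqrt{12}>3.46$, as done elsewhere in the paper.
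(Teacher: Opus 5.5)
Your reduction is the paper's own: insert Lemma~\ref{lem:Xi-loss-derivative-upper} and the definition of $\mathfrak v_N$, then prove a $\lambda$-free inequality for $N\ge3$. The gap is in how you finish that inequality; as written, the finish is not yet a proof.

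\emph{Numerical slips.} $\sqrt{4/3}/4+\sqrt{5/4}/3\approx0.6614$, so your constant is not below $0.66$. With the crude bounds you actually wrote down, the right side overtakes the left only for $N\ge9$. Checking $N=3,4,5$ would therefore not close the gap. Your $N=3$ figures are also garbled: after clearing $\pi$, the left side is about $0.047$ and the right about $0.060$.

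\emph{The monotonicity claim.} The exact left side does not decrease after multiplication by $N^2$. In fact
\[
 \pi N^2\Bigl(\frac{\sqrt{1+1/N}}{4\pi N(N+1)}+\frac{\sqrt{1+1/(N+1)}}{3\pi(N+1)^2}\Bigr)
 =\frac14\sqrt{\frac N{N+1}}+\frac{N^2\sqrt{N+2}}{3(N+1)^{5/2}},
\]
and this increases to $7/12$. So an exact check at some $N_0$ cannot be propagated the way you propose. You need a constant upper bound on the left, compared with the right side $2N^{-1}(1+1/N)^{-3/2}\sum_{k\le N}(k/(k+1))^2$, which does increase.

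\emph{A repair along your lines.} Each fraction on the left is at most $N^{-2}$, because $\sqrt{N(N+1)}\le N+1$ and $N^4(N+2)\le(N+1)^5$. Hence the left side is $<7/(12\pi N^2)$. For $N\ge4$, your sum bound together with $(1+1/N)^{-3/2}\ge(4/5)^{3/2}$ gives at least $0.65/(\pi N^2)$ on the right. The case $N=3$ is a direct check: $0.426<0.544$ after multiplying by $\pi N^2$.

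\emph{The paper's route.} The paper avoids any case split. It keeps $N(N+1)$ and $(N+1)^2$ intact on the left and uses $\sqrt{1+1/N}<7/6$ and $\sqrt{1+1/(N+1)}<9/8$. On the right it uses the even cruder sum bound $(16N-7)/36$ and $\sqrt{N(N+1)}<N+1/2$. After multiplying by $\pi N(N+1)(N+1/2)$, the difference is $(32N^2-18N-77)/(144(N+1))>0$ for all $N\ge3$.

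Your crude version failed because it discarded the factors $N+1$ in the denominators while also taking the worst-case square roots. The losses compound, since $\sqrt{1+1/N}$ partly cancels against $(N+1)^{-1}$.
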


\begin{proof}
A crude lower bound is already enough:
\begin{equation}\label{eq:v-recurrence-simple}
 \mathfrak v_N\ge
 \frac{16N-7}{18\pi[N(N+1)]^{3/2}},
\end{equation}
because
\[
 \sum_{k=1}^{N}\left(\frac{k}{k+1}\right)^2
 \ge \frac14+(N-1)\frac49=\frac{16N-7}{36}.
\]
On the other hand, Lemma~\ref{lem:Xi-loss-derivative-upper} and
\[
 \sqrt{1+\frac1N}<\frac76,
 \qquad
 \sqrt{1+\frac1{N+1}}<\frac98
\]
give
\[
 \mathcal X_N'(\lambda)
 <\frac{7}{24\pi N(N+1)}
 +\frac{3}{8\pi(N+1)^2}.
\]
Using $\sqrt{N(N+1)}<N+1/2$ in \eqref{eq:v-recurrence-simple}, the difference
between the resulting lower bound for $\mathfrak v_N$ and the last upper
bound, after multiplication by $\pi N(N+1)(N+1/2)$, is
\[
 \frac{32N^2-18N-77}{144(N+1)}>0.
\]
This separates the two bounds for every $N\ge3$.
\end{proof}

\begin{lemma}[Derivative comparison in dimensions $1$ and $2$]\label{lem:low-dimensional-derivative}
For $N=1,2$ and $0\le\lambda\le1$,
\[
 \mathcal X_N'(\lambda)<\mathfrak v_N.
\]
\end{lemma}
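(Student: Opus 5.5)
For $N=1,2$ I would keep the four-term split $\mathcal X_N'=T_1+T_2+T_3+T_4$ from the proof of Lemma~\ref{lem:Xi-loss-derivative-upper}. The difference is that $T_1$ and $T_3$ must now be bounded explicitly, since their signs are no longer guaranteed. Two inputs fail in these dimensions. For $N=1$ we have $x=(1+\lambda)/2<1$, so the sign argument behind \eqref{eq:A-lambda-negative} breaks down. For $N\le 2$ the monotonicity of $\lambda\mapsto R_{N,u}^{(\lambda)}$ from Lemma~\ref{lem:diagonal-adjacent} is not available on the whole strip. The targets are explicit numbers:
\[
 \mathfrak v_1=\frac{1}{4\sqrt2\,\pi}\approx0.0563,
 \qquad
 \mathfrak v_2=\frac{2}{\pi\,6^{3/2}}\Bigl(\frac14+\frac49\Bigr)\approx0.0301 .
\]
The plan is therefore a direct, semi-explicit computation rather than a structural argument.

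\emph{Explicit form of the series.} For $N=1$ the kernels are completely explicit. We have $K_{0,u}^{(\lambda)}=b_{2u}$, so $K_{0,u}'=0$ and $T_2=0$. Since $\mathfrak p_{1,1}=1/(1+\vartheta_\lambda)$ and $b_1=1/2$,
\[
 K_{0,u}-K_{1,u}=\frac{b_{2u+1}}{2(\lambda+3/2)} .
\]
Hence
\[
 \mathcal X_1(\lambda)=\sum_{u\ge1}\bigl(\mathcal A_{1,u}^{(\lambda)}-\mathcal A_{2,u}^{(\lambda)}\bigr)b_{2u}
 +\sum_{u\ge1}\mathcal A_{2,u}^{(\lambda)}\frac{b_{2u+1}}{2\lambda+3}.
\]
For $N=2$ the kernels $K_{1,u},K_{2,u}$ involve only $\mathfrak p_{1,1},\mathfrak p_{2,1},\mathfrak p_{2,2}$, which are rational in $\lambda$. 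In both cases $\mathcal X_N'$ is therefore a series over $u$ in which every term is explicit: a gamma ratio from \eqref{eq:A-Pochhammer} times an elementary function of $\lambda$ times products of the $b$'s. The $\lambda$-derivatives of the coefficients are given exactly by \eqref{eq:A-lambda-log-derivative} and \eqref{eq:R-shape-derivative}.

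\emph{Truncation and tail.} I would keep the terms $u\le U$ with a modest $U$, say $U=10$. The tail $u>U$ is then bounded uniformly on $0\le\lambda\le1$. For this I use \eqref{eq:A-prefactor-global}, the bound $|\partial_\lambda\log\mathcal A_{N,u}|\le\frac12\bigl(\mathfrak g(x)+\psi(x+1/2+u)-\psi(x+1/2)\bigr)\le C(1+\log u)$, the kernel bounds $K\le b_{2u}$ and \eqref{eq:kernel-parameter-derivative}, and $b_n\le 1/(2\sqrt\pi\,(n-1)^{3/2})$. Together these make the tail a small explicit constant times $U^{-1/2}\log U$, well below the available margin.

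\emph{Finite part on the strip.} For the finite part I would cover $[0,1]$ by a few subintervals. On each one I bound every term from above, using monotonicity in $\lambda$ of the individual gamma and digamma factors: each factor is either monotone or bounded via \eqref{eq:g-bounds}. This gives a rigorous interval evaluation that certifies $\mathcal X_N'(\lambda)<\mathfrak v_N$. I expect some slack, because the dominant positive contributions should be of size $\mathcal A_{N,1}\cdot\theta_2/(N+1)$ and similar, and these are well below $\mathfrak v_N$.

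The main obstacle is $T_1$ at $N=1$. There $\partial_\lambda\mathcal A_{1,u}$ can be positive for small $u$, because $\mathfrak g(x)$ exceeds the digamma bracket when $x<1$, and this positive part is not offset by any sign argument. I would handle it by computing the $u=1$ and $u=2$ terms exactly, where the digamma values at half-integers plus $\lambda/2$ are elementary enough to bound. Only after that would the crude uniform estimates be used for the remaining terms. If this leaves too little margin near $\lambda=0$, I would refine the subdivision there.
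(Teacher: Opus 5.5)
\paragraph{Verdict.}
Your route is genuinely different from the paper's, and it could work, but as written it is a plan rather than a proof, and its one quantitative claim is wrong.

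\paragraph{What is missing or incorrect.}
You propose a truncated, interval-certified evaluation of the four-term split. The paper instead uses a sign-driven analytic bound. Your route has room to succeed: by my rough estimates, at $\lambda=0$ one has $\mathcal X_1'(0)<0$, and for $N=2$ the positive contributions total only about $2\times10^{-3}$. Compare $\mathfrak v_1\approx0.056$ and $\mathfrak v_2\approx0.030$. However, the lemma is a numerical inequality, and you certify no number.

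The one quantitative claim you make does not hold with the tools you list: $U=10$ does not put the tail ``well below the margin''. The bound \eqref{eq:kernel-parameter-derivative} only gives $\partial_\lambda K_{N-1,u}=O(1/u)$. So even if you keep the decay $q_{N,u}\asymp u^{-(\lambda+1)/2}$ from \eqref{eq:A-prefactor-global}, the absolute-value tail over $u>10$ of $T_1,\dots,T_4$ at $N=2$ comes out at roughly $0.03$--$0.04$. That is comparable to $\mathfrak v_2$ itself, not well below it. If you use only $\mathcal A_{N,u}\le C/N$, the $O(1/u)$ kernel bound makes the $T_2$-tail bound diverge. To fix this you need one of the following: $U$ in the hundreds or more, the explicit derivatives of the rational kernels (which decay like $b_{2u+1}$), or sign information in the tail.

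Two smaller points:
\begin{itemize}
\item $T_3\le0$ still holds for $N=1,2$. Indeed \eqref{eq:A-lambda-negative} applies to $\mathcal A_{N+1,u}$ because $N+1\ge2$, and $K_{N-1,u}-K_{N,u}\ge0$ by Proposition~\ref{prop:kernel-lambda}. So $T_3$ needs no estimate.
\item At $N=1$, $\partial_\lambda\mathcal A_{1,u}$ can be positive only for $u=1$. The positive mass that really has to be controlled is the whole series $-\sum_u\partial_\lambda\mathcal A_{2,u}\,b_{2u}$ inside $T_1$, which is positive for every $u$.
\end{itemize}

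\paragraph{How the paper's proof differs.}
The paper exploits signs of this kind and needs no truncation, subdivision or numerics.

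For $N=1$ it writes $\mathcal X_1'=\Xi_1'-\Xi_2'$. Only the $u=1$ term of $\Xi_1'$ can be positive, which gives $(\Xi_1')_+<\frac12\cdot\frac18\cdot\frac18=\frac1{128}$. In $-\Xi_2'$ it drops the nonnegative $K_{1,u}'$ part, and sums the remaining positive series in closed form via $\Sigma_2<3/20$.

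For $N=2$ it shows $\partial_\lambda\log R_{2,u}>0$ for $u\ge2$ and $\partial_\lambda\log R_{2,1}\ge-1/60$. Consequently $U_1$ collapses to a single $u=1$ term below $1/1440$, and $U_3\le0$. The bounds on $U_2$ and $U_4$ come from the general lemmas, using the sharper factors $1-R_{2,u}<1/3$ and $N/(N+\vartheta_\lambda)\le4/5$.

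\paragraph{Trade-off.}
The paper's proof is short and checkable by hand, but its margins are razor thin. For $N=1$ it compares $7/128$ with $1/(4\pi\sqrt2)\approx0.0563$. For $N=2$, after multiplying by $\pi$, it compares about $0.0943$ with $0.0945$. Your approach would have an order of magnitude of slack. In exchange it requires an actual rigorous interval computation with a correctly sized tail, which your proposal still has to supply.
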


\begin{proof}
The finite checks are given in Appendix~\ref{app:finite-checks}.
\end{proof}

\begin{proof}[Proof of Theorem~\ref{thm:global-parameter-monotonicity}]
Proposition~\ref{prop:master-derivative-comparison} gives
$\mathcal X_N'(\lambda)<\mathfrak v_N$ for $N\ge3$, and
Lemma~\ref{lem:low-dimensional-derivative} gives the same inequality for
$N=1,2$.  Since \eqref{eq:LUE-derivative-reserve} yields
$\partial_\lambda\Gamma_{N,\lambda}>\mathfrak v_N$ in every dimension, we have,
throughout $0\le\lambda\le1$,
\[
 \partial_\lambda\DReal_N(\lambda)
 =\partial_\lambda\Gamma_{N,\lambda}-\mathcal X_N'(\lambda)>0.
\] This completes the proof.
\end{proof}

\section{The transition and its asymptotics}\label{sec:asymptotics}

The finite-dimensional argument gives a unique sign change in
$0<\lambda<1$ for every $N$.  We now turn from uniqueness to location: where
does this sign change sit when $N$ is large?  The unitary expansion is already
available from Paper~I, so only the orthogonal correction decrement has to be
expanded to second order.

Write
\[
 K_{N-1,u}^{(\lambda)}=\varkappa_{2u}
 +\mathcal E_{N-1,2u}^{(\lambda)}.
\]
With this notation, $\Xi_N^{(\lambda)}-\Xi_{N+1}^{(\lambda)}$ splits into
three contributions: the principal $\varkappa$-mass, the
coefficient-weighted remainder $\mathcal E$, and the one-step change of the
kernel.  These are treated in that order.  Each contribution has a
shape-dependent logarithmic term; the important point is that the three
shape dependences cancel when they are added.

For $u\le N$ we keep the first correction to the diagonal coefficient, while
for $u>N$ only a uniform bound is needed.  Both estimates are recorded in the
same lemma.

\begin{lemma}[First-order coefficient profile]
\label{lem:first-order-diagonal-profile}
Fix $\Lambda<\infty$.  Uniformly for $0\le\lambda\le\Lambda$, put
\[
 q_{N,u}^{(\lambda)}:=
 \frac{(N/2)_u}{((N+\lambda+1)/2)_u},
 \qquad
 P_N(\lambda):=\sqrt{\frac2N}\,
 \mathcal G\!\left(\frac{N+\lambda}{2}\right),
\]
so that $\mathcal A_{N,u}^{(\lambda)}=N^{-1}P_N(\lambda)q_{N,u}^{(\lambda)}$.
Then
\begin{equation}\label{eq:P-first-order}
 P_N(\lambda)=1+O_\Lambda(N^{-1}).
\end{equation}
For $1\le u\le N$,
\begin{align}
 q_{N,u}^{(\lambda)}
 &=1-\frac{(\lambda+1)u}{N}
 +O_\Lambda\!\left(\frac{u+u^2}{N^2}\right),
 \label{eq:q-local-expansion}\\
 N\left(1-R_{N,u}^{(\lambda)}\right)
 &=1-\frac{(\lambda+1)u}{N}
 +O_\Lambda\!\left(
 \frac1N+\frac{u+u^2}{N^2}\right),
 \label{eq:R-local-expansion}
\end{align}
and hence
\begin{align}
 \mathcal A_{N,u}^{(\lambda)}
 &=\frac1N-\frac{(\lambda+1)u}{N^2}
 +O_\Lambda\!\left(
 \frac1{N^2}+\frac{u+u^2}{N^3}\right),
 \label{eq:A-local-expansion}\\
 \mathcal A_{N,u}^{(\lambda)}-\mathcal A_{N+1,u}^{(\lambda)}
 &=\frac1{N^2}-\frac{2(\lambda+1)u}{N^3}
 +O_\Lambda\!\left(
 \frac1{N^3}+\frac{u+u^2}{N^4}\right).
 \label{eq:A-gap-local-expansion}
\end{align}
For all $u\ge1$ the global bounds
\begin{equation}\label{eq:A-global-asymptotic-bounds}
 \mathcal A_{N,u}^{(\lambda)}\le\frac{C_\Lambda}{N},
 \qquad
 \left|\mathcal A_{N,u}^{(\lambda)}-
 \mathcal A_{N+1,u}^{(\lambda)}\right|\le\frac{C_\Lambda}{N^2}
\end{equation}
hold.
\end{lemma}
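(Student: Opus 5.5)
The plan is to work directly from the Pochhammer factorization \eqref{eq:A-Pochhammer} and the adjacent-ratio formula \eqref{eq:R-factorization}; no new structural input is needed. For \eqref{eq:P-first-order}, I will use the standard asymptotics $\Gamma(x+a)/\Gamma(x)=x^a(1+O(x^{-1}))$. They give $\mathcal G(x)=\sqrt x\,(1+O(x^{-1}))$, with $x=(N+\lambda)/2$. Then $P_N(\lambda)=\sqrt{(N+\lambda)/N}\,(1+O_\Lambda(N^{-1}))=1+O_\Lambda(N^{-1})$, uniformly for $\lambda\le\Lambda$. Alternatively, the sandwich $\sqrt{x-c}\le\mathcal G(x)\le\sqrt x$ obtained from \eqref{eq:g-bounds} gives the same conclusion with explicit constants.

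For \eqref{eq:q-local-expansion}, I will write
\[
q_{N,u}^{(\lambda)}=\prod_{j=0}^{u-1}\Bigl(1-\frac{\omega}{c+\omega+j}\Bigr),\qquad c=\frac N2,\quad \omega=\frac{\lambda+1}{2},
\]
and take logarithms. Each factor has logarithm $-\omega/(c+\omega+j)+O(N^{-2})$, and for $j<u\le N$ we have
\[
\frac{1}{c+\omega+j}=\frac{2}{N}+O\Bigl(\frac{1+j}{N^2}\Bigr).
\]
Summing over $j$ gives
\[
\log q=-\frac{(\lambda+1)u}{N}+O_\Lambda\Bigl(\frac{u+u^2}{N^2}\Bigr).
\]
Because $u\le N$, this logarithm is $O_\Lambda(1)$, so exponentiating costs only a further $O(\log^2 q)=O(u^2/N^2)$. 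This is the one place where the restriction $u\le N$ matters. For $u>N$, I only need the global bound $q\le1$, which is immediate since every factor lies in $(0,1]$. Combined with $P_N=O(1)$, this gives the first inequality in \eqref{eq:A-global-asymptotic-bounds}. The expansion \eqref{eq:A-local-expansion} then follows by multiplying $N^{-1}P_N$ by $q$.

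The step I expect to require the most care is \eqref{eq:R-local-expansion}, because $1-R$ is a difference of quantities close to $1$ and we need its relative precision. From \eqref{eq:R-factorization}, I will write $\log R$ as a sum of three pieces:
\begin{itemize}
\item the factor $(N/(N+1))^{3/2}$ contributes $-\tfrac{3}{2N}+\tfrac{3}{4N^2}+O(N^{-3})$;
\item the factor $(x+1/4)/x$ contributes $\tfrac{1}{2N}+O_\Lambda(N^{-2})$;
\item the product contributes $\sum_{j<u}\dfrac{\lambda+1}{4(c+j)(c+j+\lambda/2+1)}=\sum_{j<u}\Bigl[\dfrac{\lambda+1}{N^2}+O\Bigl(\dfrac{1+j}{N^3}\Bigr)\Bigr]=\dfrac{(\lambda+1)u}{N^2}+O\Bigl(\dfrac{u+u^2}{N^3}\Bigr)$.
\end{itemize}
Hence
\[
\log R=-\frac1N+\frac{(\lambda+1)u}{N^2}+O_\Lambda\Bigl(\frac1{N^2}+\frac{u+u^2}{N^3}\Bigr).
\]
For $u\le N$ this is $O(1/N)$, so $1-R=-\log R+O((\log R)^2)$, and the squared term is absorbed in the error. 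Multiplying by $N$ gives \eqref{eq:R-local-expansion}.

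Next, \eqref{eq:A-gap-local-expansion} follows from $\mathcal A_{N,u}-\mathcal A_{N+1,u}=\mathcal A_{N,u}(1-R_{N,u})$: multiply \eqref{eq:A-local-expansion} by \eqref{eq:R-local-expansion}$/N$. The two first-order terms $-(\lambda+1)u/N^3$ add to the coefficient $-2(\lambda+1)u/N^3$. The cross error terms are within the stated error, again using $u\le N$.

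For the global gap bound in \eqref{eq:A-global-asymptotic-bounds}, where $u$ is arbitrary, I will not expand. Instead I will bound $|1-R|$ directly, since \eqref{eq:R-factorization} is explicit. The product term is at most
\[
\exp\Bigl(\sum_{j\ge0}\frac{\lambda+1}{4(c+j)^2}\Bigr)=1+O_\Lambda(N^{-1}),
\]
uniformly in $u$, because the sum converges. Hence $R=1+O_\Lambda(N^{-1})$ uniformly in $u$, and therefore $|\mathcal A_{N,u}-\mathcal A_{N+1,u}|\le\mathcal A_{N,u}\,|1-R|\le C_\Lambda N^{-2}$. This uses the first global bound $\mathcal A_{N,u}\le C_\Lambda/N$. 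Uniformity in $\lambda\in[0,\Lambda]$ holds throughout because every constant above depends only on $\Lambda$.
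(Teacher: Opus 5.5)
Your proposal is correct and follows the paper's proof essentially step for step. Both arguments use gamma-ratio asymptotics for $P_N$ and a logarithmic Taylor expansion of $q_{N,u}^{(\lambda)}$ and of the factorized ratio $R_{N,u}^{(\lambda)}$, where the base factor gives $-1/N$ and the product gives $(\lambda+1)u/N^2$. They then exponentiate using $u\le N$, apply the identity $\mathcal A_{N,u}-\mathcal A_{N+1,u}=\mathcal A_{N,u}(1-R_{N,u})$, and obtain the global bounds from $q\le1$ together with a uniform-in-$u$ estimate $|1-R|\le C_\Lambda/N$ coming from the summability of $\sum_j (N/2+j)^{-2}$.

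One cosmetic slip: in the product term you summed the arguments rather than their logarithms $\log(1+\cdot)$. The discrepancy is only $O(u/N^4)$ and is absorbed in your error term.
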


\begin{proof}
The fixed-shift gamma-ratio expansion \cite{TricomiErdelyi} applied to $\mathcal G$ gives
\eqref{eq:P-first-order}, locally uniformly in $\lambda$.  With
$c=N/2$ and $\omega=(\lambda+1)/2$,
\[
 \log q_{N,u}^{(\lambda)}
 =-\sum_{j=0}^{u-1}\log\left(1+\frac{\omega}{c+j}\right).
\]
For $u\le N$,
\[
 \sum_{j=0}^{u-1}\frac1{c+j}
 =\frac{2u}{N}+O\!\left(\frac{u^2}{N^2}\right),
 \qquad
 \sum_{j=0}^{u-1}\frac1{(c+j)^2}
 =O\!\left(\frac{u}{N^2}\right).
\]
Taylor's formula for the logarithm therefore gives
\[
 \log q_{N,u}^{(\lambda)}
 =-\frac{(\lambda+1)u}{N}
 +O_\Lambda\!\left(\frac{u+u^2}{N^2}\right).
\]
Exponentiating and using
$|\e^{-y}-1+y|\le C_\Lambda y^2$ for
$0\le y\le\Lambda+1$ proves \eqref{eq:q-local-expansion}.

For the adjacent ratio, write \eqref{eq:R-factorization} as a base factor
times a product.  Uniformly for $u\le N$,
\begin{align*}
 \log\left[
 \left(\frac N{N+1}\right)^{3/2}\frac{x+1/4}{x}
 \right]
 &=-\frac1N+O_\Lambda(N^{-2}),\\
 \sum_{j=0}^{u-1}
 \log\left(
 1+\frac{\lambda+1}{4(c+j)(c+j+\lambda/2+1)}
 \right)
 &=\frac{(\lambda+1)u}{N^2}
 +O_\Lambda\!\left(\frac{u+u^2}{N^3}\right).
\end{align*}
Hence
\[
 \log R_{N,u}^{(\lambda)}
 =-\frac1N+\frac{(\lambda+1)u}{N^2}
 +O_\Lambda\!\left(
 \frac1{N^2}+\frac{u+u^2}{N^3}\right).
\]
Since the right-hand side is $O_\Lambda(N^{-1})$ for $u\le N$,
exponentiation yields \eqref{eq:R-local-expansion}.  Equations
\eqref{eq:A-local-expansion}--\eqref{eq:A-gap-local-expansion} follow from
$\mathcal A_{N,u}=N^{-1}P_Nq_{N,u}$ and
$\mathcal A_{N,u}-\mathcal A_{N+1,u}
=\mathcal A_{N,u}(1-R_{N,u})$.  For the global bounds, \eqref{eq:A-prefactor-global} gives directly
$\mathcal A_{N,u}^{(\lambda)}\le C_\Lambda/N$.
Moreover, \eqref{eq:R-factorization} implies uniformly in $u$ and
$0\le\lambda\le\Lambda$ that $|\log R_{N,u}^{(\lambda)}|\le C_\Lambda/N$:
the base factor is $1+O_\Lambda(N^{-1})$ and the logarithm of the product is
bounded by a constant times $\sum_{j\ge0}(N/2+j)^{-2}=O(N^{-1})$.  Therefore
$|1-R_{N,u}^{(\lambda)}|\le C_\Lambda/N$, and
\[
 |\mathcal A_{N,u}^{(\lambda)}-\mathcal A_{N+1,u}^{(\lambda)}|
 =\mathcal A_{N,u}^{(\lambda)}|1-R_{N,u}^{(\lambda)}|
 \le \frac{C_\Lambda}{N^2}.
\]
This gives the global bounds \eqref{eq:A-global-asymptotic-bounds}.
\end{proof}

The principal contribution can be read off directly.  Write
\[
 S_\varkappa:=\sum_{u\ge1}\varkappa_{2u}.
\]
The sum is explicit, but it is convenient to keep the notation $S_\varkappa$
until the end of the calculation.  Its first correction is harmonic.

\begin{lemma}[Principal diagonal mass]
\label{lem:principal-diagonal-mass}
For every $\Lambda<\infty$, uniformly for $0\le\lambda\le\Lambda$,
\begin{equation*}
 \sum_{u\ge1}
 \bigl(\mathcal A_{N,u}^{(\lambda)}-\mathcal A_{N+1,u}^{(\lambda)}\bigr)
 \varkappa_{2u}
 =\frac{S_\varkappa}{N^2}
 -\frac{\lambda+1}{2\pi}\frac{\log N}{N^3}
 +O_\Lambda(N^{-3}).
\end{equation*}
\end{lemma}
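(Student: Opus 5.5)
I will split the sum at $u=N$ and use the two regimes of Lemma~\ref{lem:first-order-diagonal-profile}. Since $\varkappa_{2u}=1/(\pi(4u^2-1/4))$, the weights satisfy $\varkappa_{2u}=\frac1{4\pi u^2}+O(u^{-4})$. This decay is what turns the linear-in-$u$ correction of the coefficient gap into a harmonic sum, and hence into a logarithm.

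\emph{Tail $u>N$.} By the global bound \eqref{eq:A-global-asymptotic-bounds},
\[
 \sum_{u>N}\bigl|\mathcal A_{N,u}^{(\lambda)}-\mathcal A_{N+1,u}^{(\lambda)}\bigr|\varkappa_{2u}
 \le \frac{C_\Lambda}{N^2}\sum_{u>N}\frac{1}{u^2}
 =O_\Lambda(N^{-3}).
\]
The same estimate gives $\sum_{u>N}\varkappa_{2u}/N^2=O(N^{-3})$. So both the tail of the sum and the tail of $S_\varkappa/N^2$ can be discarded.

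\emph{Bulk $1\le u\le N$.} I insert \eqref{eq:A-gap-local-expansion} and obtain three pieces.
\begin{itemize}
\item The leading term $N^{-2}\sum_{u\le N}\varkappa_{2u}$ equals $S_\varkappa/N^2+O(N^{-3})$, by the tail bound just noted.
\item The linear term is $-\frac{2(\lambda+1)}{N^3}\sum_{u\le N}u\varkappa_{2u}$. Writing $u\varkappa_{2u}=\frac{1}{4\pi u}+O(u^{-3})$ gives $\sum_{u\le N}u\varkappa_{2u}=\frac{\log N}{4\pi}+O(1)$. This piece therefore contributes $-\frac{\lambda+1}{2\pi}\frac{\log N}{N^3}+O_\Lambda(N^{-3})$.
\item The remainder is $O_\Lambda\bigl(N^{-3}+(u+u^2)N^{-4}\bigr)$ per term. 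Summed against $\varkappa_{2u}$, it is bounded by $O(N^{-3})\sum_u\varkappa_{2u}+N^{-4}\sum_{u\le N}(u+u^2)\varkappa_{2u}$. Since $\sum_{u\le N}u^2\varkappa_{2u}=O(N)$, this is $O_\Lambda(N^{-3})$.
\end{itemize}

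All constants are uniform in $0\le\lambda\le\Lambda$ because Lemma~\ref{lem:first-order-diagonal-profile} is uniform there.

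\emph{Main obstacle.} The delicate point is the $u^2$ part of the error in \eqref{eq:A-gap-local-expansion}. It is only harmless because $u^2\varkappa_{2u}$ is bounded, so its bulk sum is $O(N)$ and costs exactly $N^{-3}$. Beyond $u\sim N$ the local expansion is useless, which is why the split at $u=N$ and the separate global bound are needed. I will also check that the $O(N^{-3})$ error from $P_N$ inside \eqref{eq:A-gap-local-expansion} is already absorbed in that lemma, so that no further $\log N$ terms appear.
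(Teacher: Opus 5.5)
Your proposal is correct and follows the paper's proof essentially step by step: the split at $u=N$, the local gap expansion \eqref{eq:A-gap-local-expansion} controlled by $\sum_{u\le N}u^m\varkappa_{2u}$ for $m=0,1,2$, the global bound \eqref{eq:A-global-asymptotic-bounds} for the tail, and the harmonic sum $\sum_{u\le N}u\varkappa_{2u}=\frac{\log N}{4\pi}+O(1)$, which produces the logarithm. The only addition in the paper is the explicit evaluation $S_\varkappa=\frac2\pi-\frac12$ via the Gregory--Leibniz series. That evaluation is not needed for the lemma as stated.
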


\begin{proof}
Since
\[
 \varkappa_{2u}
 =\frac1{4\pi u^2}+O(u^{-4}),
\]
we have
\begin{equation}\label{eq:kappa-harmonic-mass}
 \sum_{u=1}^N u\varkappa_{2u}
 =\frac1{4\pi}\log N+O(1),
 \qquad
 \sum_{u>N}\varkappa_{2u}=O(N^{-1}).
\end{equation}
For $u\le N$, insert \eqref{eq:A-gap-local-expansion}.  Its remainder,
after summation against $\varkappa_{2u}$, is $O_\Lambda(N^{-3})$, because
\[
 \sum_{u\le N}\varkappa_{2u}=O(1),\qquad
 \sum_{u\le N}u\varkappa_{2u}=O(\log N),\qquad
 \sum_{u\le N}u^2\varkappa_{2u}=O(N).
\]
For $u>N$, \eqref{eq:A-global-asymptotic-bounds} and
\eqref{eq:kappa-harmonic-mass} give an $O_\Lambda(N^{-3})$ contribution.
Replacing $\sum_{u\le N}\varkappa_{2u}$ by its full sum changes only the
$O(N^{-3})$ term.  The remaining constant is explicit:
\[
 \sum_{u\ge1}\varkappa_{2u}
 =\frac2\pi\sum_{u\ge1}
 \left(\frac1{4u-1}-\frac1{4u+1}\right)
 =\frac2\pi-\frac12=S_\varkappa
\]
by the Gregory--Leibniz series, and
\eqref{eq:kappa-harmonic-mass} gives the logarithmic coefficient.
\end{proof}

The remaining two pieces are lower order before weighting, but both contribute
to the logarithmic term in the correction decrement.  Since they are driven
by the same tail product $b_t\mathcal B_t$, we evaluate them together.

\begin{lemma}[Logarithmic remainder terms]
\label{lem:log-kernel-masses}
Fix $\Lambda<\infty$.  Uniformly for $0\le\lambda\le\Lambda$,
\begin{align}
 \sum_{u\ge1}\mathcal E_{N-1,2u}^{(\lambda)}
 &=\frac{\vartheta_\lambda}{4\pi}\frac{\log N}{N}
 +O_\Lambda(N^{-1}),
 \label{eq:E-total-refined}\\
 \sum_{u\ge1}
 \bigl(K_{N-1,u}^{(\lambda)}-K_{N,u}^{(\lambda)}\bigr)
 &=\frac{\vartheta_\lambda}{4\pi}\frac{\log N}{N^2}
 +O_\Lambda(N^{-2}).
 \label{eq:K-step-total-refined}
\end{align}
Moreover,
\begin{align}
 &\sum_{u\ge1}
 \bigl(\mathcal A_{N,u}^{(\lambda)}-\mathcal A_{N+1,u}^{(\lambda)}\bigr)
 \mathcal E_{N-1,2u}^{(\lambda)}
 =\frac{\vartheta_\lambda}{4\pi}\frac{\log N}{N^3}
 +O_\Lambda(N^{-3}),
 \label{eq:weighted-E-refined}\\
 &\sum_{u\ge1}\mathcal A_{N+1,u}^{(\lambda)}
 \bigl(K_{N-1,u}^{(\lambda)}-K_{N,u}^{(\lambda)}\bigr)
 =\frac{\vartheta_\lambda}{4\pi}\frac{\log N}{N^3}
 +O_\Lambda(N^{-3}).
 \label{eq:weighted-K-step-refined}
\end{align}
\end{lemma}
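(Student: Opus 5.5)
The plan is to interchange the order of summation and reduce all four statements to the single tail product $b_t\mathcal B_t$ from Lemma~\ref{lem:bB-tail}.

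\textbf{Step 1: the unweighted $\mathcal E$-mass.} By \eqref{eq:E-lambda} with $r=N-1$, summing over $u$ gives
\[
 \sum_{u\ge1}\mathcal E_{N-1,2u}^{(\lambda)}
 =\sum_{t=1}^{N-1}\bigl(1-\mathfrak p_{N-1,t}^{(\lambda)}\bigr)b_t\mathcal B_t
 +\sum_{t\ge N}b_t\mathcal B_t .
\]
I will first sharpen Lemma~\ref{lem:bB-tail} to an asymptotic. Near $x=1$, the integrand $x^{t+1/2}/(\sqrt{1-x}(1+x))$ is approximately $\tfrac12 x^{t}(1-x)^{-1/2}$. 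This gives $\mathcal B_t=\frac{1}{2\sqrt{\pi t}}(1+O(1/t))$ and $b_t=\frac{1}{2\sqrt\pi\,t^{3/2}}(1+O(1/t))$, hence $b_t\mathcal B_t=\frac{1}{4\pi t^2}+O(t^{-3})$. The tail $t\ge N$ is then $O(N^{-1})$.

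Next I need the profile of $\mathfrak p_{r,t}$. From its product form, $\log\mathfrak p_{r,t}=-\vartheta_\lambda\sum_{n=r-t+1}^{r}\frac1n+O(\cdot)$, so for $t\le r/2$ one has $1-\mathfrak p_{r,t}=\vartheta_\lambda t/r+O_\Lambda(t^2/r^2+t/r^2)$. For $r/2<t\le r$ the factor $1-\mathfrak p_{r,t}$ is bounded, and those $t$ contribute only $O(\sum_{t>r/2}t^{-2})=O(1/N)$. The main part is therefore $\sum_{t\le N/2}\frac{\vartheta_\lambda t}{N}\cdot\frac{1}{4\pi t^2}=\frac{\vartheta_\lambda}{4\pi}\frac{\log N}{N}+O(N^{-1})$, which is \eqref{eq:E-total-refined}.

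\textbf{Step 2: the kernel step.} By the same interchange, $\sum_u(K_{N-1,u}-K_{N,u})=\sum_{t=1}^{N}(\mathfrak p_{N,t}-\mathfrak p_{N-1,t})b_t\mathcal B_t$. Here I use \eqref{eq:p-r-step}:
\[
 \mathfrak p_{N,t}-\mathfrak p_{N-1,t}
 =\mathfrak p_{N-1,t}\,\frac{\vartheta_\lambda t}{(N-t)(N+\vartheta_\lambda)} .
\]
For $t\le N/2$ this equals $\vartheta_\lambda t/N^2\,(1+O(t/N))$, and summing against $1/(4\pi t^2)$ produces the logarithm. For $N/2<t<N$, the factor $\mathfrak p_{N-1,t}$ is at most about $((N-t)/N)^{\vartheta_\lambda}$, and the quotient by $N-t$ then contributes $\sum_{t>N/2}t^{-2}\,N^{-1}(N-t)^{\vartheta_\lambda-1}$. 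This is the main obstacle. For $\vartheta_\lambda\ge1/2$ the sum is $O(N^{-2}\cdot N^{\vartheta_\lambda-1}\cdot N^{... })$; more precisely, bounding $\sum_{s<N/2}s^{\vartheta_\lambda-1}\lesssim N^{\vartheta_\lambda}$ against the weight $N^{-2-1-\vartheta_\lambda+...}$ requires care. I would instead use the cruder bound $\mathfrak p_{N-1,t}\le\sqrt{(N-t)/N}$ from the proof of Lemma~\ref{lem:kernel-variation}. This gives $\sum_{s\le N/2}s^{-1/2}N^{-1/2}\cdot N^{-3}=O(N^{-3})$. The new term $t=N$ is $\mathfrak p_{N,N}\,b_N\mathcal B_N=O(N^{-2-1/2})$. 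Together these prove \eqref{eq:K-step-total-refined}.

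\textbf{Step 3: the weighted versions.} For \eqref{eq:weighted-E-refined} and \eqref{eq:weighted-K-step-refined}, insert Lemma~\ref{lem:first-order-diagonal-profile}. This gives $\mathcal A_{N,u}-\mathcal A_{N+1,u}=N^{-2}+O_\Lambda(u/N^3+N^{-3})$ for $u\le N$ with the global bound $O(N^{-2})$, and $\mathcal A_{N+1,u}=N^{-1}+O_\Lambda(u/N^2+N^{-2})$ with the global bound $O(N^{-1})$.

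The leading parts are $N^{-2}$ times \eqref{eq:E-total-refined} and $N^{-1}$ times \eqref{eq:K-step-total-refined}. The errors require weighted masses such as $\sum_u u\,\mathcal E_{N-1,2u}$, which is $O(1/N)$ times $\sum_u u\,b_{t+2u}$-type sums. Instead of computing these, I will bound them crudely: $\mathcal E_{N-1,2u}\le\sum_t b_tb_{t+2u}\lesssim u^{-1}\cdot\min(1,\ldots)$ via $\theta_{2u}$, and $\sum_{u\le N}u\,\mathcal E_{N-1,2u}\le\sum_{u\le N}\theta_{2u}\cdot O(1)$-type arguments combined with the $\vartheta t/N$ factor. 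This should give $O(1)$, so that the error times $N^{-3}$ is admissible. Similarly, using \eqref{eq:E-uniform-difference}-style bounds (valid with constant $C_\Lambda$ for all $\lambda$) together with $\theta_{2u}<1/(4\pi u)$, one gets $\sum_{u\le N}u(K_{N-1,u}-K_{N,u})=O(\log N/N)$, i.e.\ $O(1/N)$ up to a log, which after multiplication by $N^{-2}$ is $O(N^{-3}\log N)$. I expect that this last bound must be tightened to $O(1/N)$. I would do that by using the $t$-weighted form $\sum_t(\Delta\mathfrak p)\,t\,b_t\mathcal B'_t$, whose sum converges, in place of $\theta$. The range $u>N$ is handled by the global bounds together with tails of order $N^{-1}\cdot$ the already controlled masses.
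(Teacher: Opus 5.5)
Your Steps 1 and 2 follow the paper's argument: sum over $u$ first, use $b_t\mathcal B_t=\frac1{4\pi t^2}+O(t^{-3})$, and expand $1-\mathfrak p_{N-1,t}^{(\lambda)}$ and $\mathfrak p_{N,t}^{(\lambda)}-\mathfrak p_{N-1,t}^{(\lambda)}$ for $t\le N/2$. They are essentially sound.

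The range $N/2<t<N$ in Step 2 is not actually an obstacle. With $k=N-t$ the factor is $N^{-\vartheta_\lambda}k^{\vartheta_\lambda-1}$, not $N^{-1}k^{\vartheta_\lambda-1}$. It therefore sums to $O_\Lambda(1)$ and contributes $O_\Lambda(N^{-2})$ against $b_t\mathcal B_t=O(N^{-2})$. Your substitute $\mathfrak p_{N-1,t}^{(\lambda)}\le\sqrt{(N-t)/N}$ also works and avoids gamma-ratio asymptotics. It gives $\mathfrak p_{N,t}^{(\lambda)}-\mathfrak p_{N-1,t}^{(\lambda)}\le\vartheta_\lambda/\sqrt{N(N-t)}$, hence $O(N^{-2})$ rather than the $O(N^{-3})$ you state, but that is still enough.

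The genuine gap is the range $u>N$ in Step 3. There the only coefficient errors available are the global ones, $O_\Lambda(N^{-2})$ and $O_\Lambda(N^{-1})$. You therefore need $\sum_{u>N}\mathcal E_{N-1,2u}^{(\lambda)}=O_\Lambda(N^{-1})$ and $\sum_{u>N}(K_{N-1,u}^{(\lambda)}-K_{N,u}^{(\lambda)})=O_\Lambda(N^{-2})$. Neither follows from what you propose:
\begin{itemize}
\item Your $\theta$-bounds, $\mathcal E_{N-1,2u}^{(\lambda)}\lesssim\theta_{2u}/N$ and $K_{N-1,u}^{(\lambda)}-K_{N,u}^{(\lambda)}\lesssim\theta_{2u}/N^2$, decay only like $u^{-1}$, so they are not summable over $u>N$.
\item Falling back on the total masses \eqref{eq:E-total-refined}--\eqref{eq:K-step-total-refined} leaves errors of order $N^{-3}\log N$. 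That is exactly the order of the logarithmic term you are trying to identify.
\end{itemize}

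The missing idea is to keep the $u^{-3/2}$ decay of $b_{t+2u}$ and factor through $t$. Use
$\sum_{t<N}(1-\mathfrak p_{N-1,t}^{(\lambda)})b_t+\sum_{t\ge N}b_t=O_\Lambda(N^{-1/2})$ and $\sum_{t\le N}(\mathfrak p_{N,t}^{(\lambda)}-\mathfrak p_{N-1,t}^{(\lambda)})b_t=O_\Lambda(N^{-3/2})$, together with $\sup_t\sum_{u>N}b_{t+2u}=O(N^{-1/2})$. The same factorization with $\sup_t\sum_{u\le N}u^m b_{t+2u}=O(N^{m-1/2})$ also gives the $u\le N$ moments; this is how the paper proceeds.

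For $u\le N$ your $\theta$ route can be made to work, but there are two slips:
\begin{itemize}
\item $N^{-2}\sum_{u\le N}u\,\theta_{2u}=O(N^{-1})$ with no logarithm, so the tightening you anticipate is unnecessary.
\item The inequality $K_{N-1,u}^{(\lambda)}-K_{N,u}^{(\lambda)}\le C_\Lambda\theta_{2u}/N^2$ is not \eqref{eq:E-uniform-difference}, which is proved only for $\lambda\ge1$. For $\lambda<1/2$ its termwise proof fails near $t=N$, where $\mathfrak p_{N,N-1}^{(\lambda)}-\mathfrak p_{N-1,N-1}^{(\lambda)}\asymp N^{-\vartheta_\lambda}\gg N^{-1}$. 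The range $t>N/2$ must be handled separately, as in your Step 2.
\end{itemize}
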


\begin{proof}
Recall $\mathcal B_t=\sum_{u\ge1}b_{t+2u}$.  The beta representation from
Lemma~\ref{lem:bB-tail} gives
\[
 \mathcal B_t
 =\frac1\pi\int_0^1
 \frac{x^{t+1/2}}{\sqrt{1-x}(1+x)}\,\dd x.
\]
Since $(1+x)^{-1}=1/2+O(1-x)$ on $[0,1]$, the beta integral and the standard
gamma-ratio expansion imply
\[
 \mathcal B_t=\frac1{2\sqrt\pi}\,t^{-1/2}+O(t^{-3/2}),
 \qquad
 b_t=\frac1{2\sqrt\pi}\,t^{-3/2}+O(t^{-5/2}).
\]
Thus
\begin{equation}\label{eq:bB-asymptotic}
 b_t\mathcal B_t=\frac1{4\pi t^2}+O(t^{-3}).
\end{equation}

\smallskip\noindent\emph{The $\mathcal E$-mass.}
Put
\[
 d_{N,t}:=1-\mathfrak p_{N-1,t}^{(\lambda)},\qquad 1\le t\le N-1.
\]
For $t\le N/2$, the same logarithmic expansion used in
Lemma~\ref{lem:first-order-diagonal-profile} gives
\begin{equation}\label{eq:d-local-expansion}
 d_{N,t}=\frac{\vartheta_\lambda t}{N}
 +O_\Lambda\!\left(\frac{t+t^2}{N^2}\right).
\end{equation}
Together with \eqref{eq:bB-asymptotic}, the leading product is
$\vartheta_\lambda/(4\pi Nt)$; this harmonic factor is the source of the logarithm.
Summing \eqref{eq:E-lambda} first in $u$ gives
\[
 \sum_{u\ge1}\mathcal E_{N-1,2u}^{(\lambda)}
 =\sum_{t=1}^{N-1}d_{N,t}b_t\mathcal B_t
 +\sum_{t=N}^{\infty}b_t\mathcal B_t.
\]
The range $t>N/2$ contributes $O(N^{-1})$ by
\eqref{eq:bB-asymptotic}; on $t\le N/2$, insert
\eqref{eq:d-local-expansion} and \eqref{eq:bB-asymptotic}.  Since
$\sum_{t\le N/2}t^{-1}=\log N+O(1)$, this proves
\eqref{eq:E-total-refined}.

\smallskip\noindent\emph{The kernel step.}
Set
\[
 \delta_{N,t}:=
 \mathfrak p_{N,t}^{(\lambda)}-
 \mathfrak p_{N-1,t}^{(\lambda)},
 \qquad 1\le t\le N,
\]
with $\mathfrak p_{N-1,N}^{(\lambda)}=0$.  From the kernel representation,
\[
 K_{N-1,u}^{(\lambda)}-K_{N,u}^{(\lambda)}
 =\sum_{t=1}^{N}\delta_{N,t}b_tb_{t+2u},
\]
and therefore
\[
 \sum_{u\ge1}\bigl(K_{N-1,u}^{(\lambda)}-K_{N,u}^{(\lambda)}\bigr)
 =\sum_{t=1}^{N}\delta_{N,t}b_t\mathcal B_t.
\]
For $t<N$ there is the exact identity
\begin{equation}\label{eq:delta-p-exact}
 \delta_{N,t}
 =\mathfrak p_{N-1,t}^{(\lambda)}
 \frac{\vartheta_\lambda t}{(N-t)(N+\vartheta_\lambda)}.
\end{equation}
For $t\le N/2$ this becomes
\begin{equation}\label{eq:delta-p-local}
 \delta_{N,t}
 =\frac{\vartheta_\lambda t}{N^2}
 +O_\Lambda\!\left(\frac{t+t^2}{N^3}\right).
\end{equation}
Now the same product \eqref{eq:bB-asymptotic} contributes
$\vartheta_\lambda/(4\pi N^2t)$, so the kernel step has the same harmonic origin but
one additional factor $N^{-1}$.
For the complementary range write $k=N-t$.  The gamma form of
$\mathfrak p_{N-1,N-k}^{(\lambda)}$, together with uniform gamma-ratio bounds
for $\vartheta_\lambda$ in compact subsets of $[1/2,\infty)$, gives
\[
 \mathfrak p_{N-1,N-k}^{(\lambda)}
 \le C_\Lambda\left(\frac{k}{N}\right)^{\vartheta_\lambda},
 \qquad 1\le k\le N/2.
\]
By \eqref{eq:delta-p-exact},
$\delta_{N,N-k}\le C_\Lambda N^{-\vartheta_\lambda}k^{\vartheta_\lambda-1}$.
Since $b_t\mathcal B_t=O(N^{-2})$ on this range, its total contribution is
$O_\Lambda(N^{-2})$.  The term $t=N$ satisfies the same bound directly from
$\mathfrak p_{N,N}^{(\lambda)}=O_\Lambda(N^{-\vartheta_\lambda})$.
Combining this with \eqref{eq:delta-p-local} and
\eqref{eq:bB-asymptotic} proves \eqref{eq:K-step-total-refined}.

\smallskip\noindent\emph{Restoring the diagonal coefficients.}
To pass from these two unweighted masses to the terms that actually occur in
the correction decrement, we may replace the diagonal coefficients by their
leading values.  The required error control follows from the preceding
estimates:
\begin{equation*}
 \sum_{t=1}^{N-1}d_{N,t}b_t+\sum_{t\ge N}b_t
 =O_\Lambda(N^{-1/2}),
 \qquad
 \sum_{t=1}^{N}\delta_{N,t}b_t
 =O_\Lambda(N^{-3/2}).
\end{equation*}
Using $b_n=O(n^{-3/2})$ and summing first in $u$ yields
\begin{align}
 \sum_{u\le N}u\,\mathcal E_{N-1,2u}^{(\lambda)}&=O_\Lambda(1),\qquad
 \sum_{u\le N}u^2\mathcal E_{N-1,2u}^{(\lambda)}=O_\Lambda(N),
 \notag\\
 \sum_{u>N}\mathcal E_{N-1,2u}^{(\lambda)}&=O_\Lambda(N^{-1}),
 \label{eq:E-moment-bounds}\\
 \sum_{u\le N}u\bigl(K_{N-1,u}^{(\lambda)}-K_{N,u}^{(\lambda)}\bigr)
 &=O_\Lambda(N^{-1}),
 \notag\\
 \sum_{u\le N}u^2\bigl(K_{N-1,u}^{(\lambda)}-K_{N,u}^{(\lambda)}\bigr)
 &=O_\Lambda(1),
 \notag\\
 \sum_{u>N}\bigl(K_{N-1,u}^{(\lambda)}-K_{N,u}^{(\lambda)}\bigr)
 &=O_\Lambda(N^{-2}).
 \label{eq:K-step-moment-bounds}
\end{align}
Here we use, for $m=1,2$,
$\sum_{u\le N}u^m b_{t+2u}=O(N^{m-1/2})$ uniformly in $t$, while
$\sum_{u>N}b_{t+2u}=O(N^{-1/2})$.

Using \eqref{eq:A-gap-local-expansion},
\eqref{eq:A-global-asymptotic-bounds}, and
\eqref{eq:E-moment-bounds}, we obtain
\[
 \sum_{u\ge1}
 \left[
 \mathcal A_{N,u}^{(\lambda)}-\mathcal A_{N+1,u}^{(\lambda)}-\frac1{N^2}
 \right]\mathcal E_{N-1,2u}^{(\lambda)}
 =O_\Lambda(N^{-3}).
\]
Together with \eqref{eq:E-total-refined}, this gives
\eqref{eq:weighted-E-refined}.  Similarly,
\eqref{eq:A-local-expansion}, the global bound
$\mathcal A_{N+1,u}^{(\lambda)}\le C_\Lambda/N$, and
\eqref{eq:K-step-moment-bounds} imply
\[
 \sum_{u\ge1}
 \left[\mathcal A_{N+1,u}^{(\lambda)}-\frac1N\right]
 \bigl(K_{N-1,u}^{(\lambda)}-K_{N,u}^{(\lambda)}\bigr)
 =O_\Lambda(N^{-3}).
\]
Combining this with \eqref{eq:K-step-total-refined} gives
\eqref{eq:weighted-K-step-refined}.
\end{proof}

\begin{proposition}[Two-term orthogonal correction decrement]
\label{prop:Xi-decrement-limit}
For every $\Lambda<\infty$, uniformly for $0\le\lambda\le\Lambda$,
\begin{equation}\label{eq:Xi-decrement-local-uniform}
 \Xi_N^{(\lambda)}-\Xi_{N+1}^{(\lambda)}
 =\frac{S_\varkappa}{N^2}
 -\frac1{4\pi}\frac{\log N}{N^3}
 +O_\Lambda(N^{-3}),
 \qquad
 S_\varkappa=\frac2\pi-\frac12.
\end{equation}
\end{proposition}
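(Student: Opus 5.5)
We start from the parity-free diagonal representation $\Xi_N^{(\lambda)}=\sum_{u\ge1}\mathcal A_{N,u}^{(\lambda)}K_{N-1,u}^{(\lambda)}$ with $K_{N-1,u}^{(\lambda)}=\varkappa_{2u}+\mathcal E_{N-1,2u}^{(\lambda)}$. Subtracting the same representation at $N+1$ gives the exact three-term split
\begin{align*}
 \Xi_N^{(\lambda)}-\Xi_{N+1}^{(\lambda)}
 ={}&\sum_{u\ge1}\bigl(\mathcal A_{N,u}^{(\lambda)}-\mathcal A_{N+1,u}^{(\lambda)}\bigr)\varkappa_{2u}
 +\sum_{u\ge1}\bigl(\mathcal A_{N,u}^{(\lambda)}-\mathcal A_{N+1,u}^{(\lambda)}\bigr)\mathcal E_{N-1,2u}^{(\lambda)}\\
 &+\sum_{u\ge1}\mathcal A_{N+1,u}^{(\lambda)}\bigl(K_{N-1,u}^{(\lambda)}-K_{N,u}^{(\lambda)}\bigr).
\end{align*}
All three series converge absolutely by the bounds already recorded, so the rearrangement is legitimate. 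This is the same loss split used in Lemma~\ref{lem:uniform-correction-loss}, with the first sum further separated into its $\varkappa$-part and its $\mathcal E$-part.

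Each piece has now been expanded uniformly for $0\le\lambda\le\Lambda$. Lemma~\ref{lem:principal-diagonal-mass} gives the first sum as
\[
 \frac{S_\varkappa}{N^2}-\frac{\lambda+1}{2\pi}\frac{\log N}{N^3}+O_\Lambda(N^{-3}).
\]
By \eqref{eq:weighted-E-refined} and \eqref{eq:weighted-K-step-refined} in Lemma~\ref{lem:log-kernel-masses}, the second and third sums each equal
\[
 \frac{\vartheta_\lambda}{4\pi}\frac{\log N}{N^3}+O_\Lambda(N^{-3}).
\]
Adding the logarithmic coefficients gives
\[
 -\frac{\lambda+1}{2\pi}+\frac{2(\lambda+1/2)}{4\pi}
 =\frac{-2\lambda-2+2\lambda+1}{4\pi}=-\frac1{4\pi}.
\]
The shape dependence cancels exactly, as anticipated in the section introduction. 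The constant $S_\varkappa=2/\pi-1/2$ was already evaluated via the Gregory--Leibniz series in Lemma~\ref{lem:principal-diagonal-mass}. This yields \eqref{eq:Xi-decrement-local-uniform}.

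The only point needing care is uniformity. Each of the three cited expansions is stated uniformly on $0\le\lambda\le\Lambda$, and there are only finitely many (three) error terms. Hence their sum is $O_\Lambda(N^{-3})$ uniformly, and no further estimates are required. The real work was in Lemma~\ref{lem:log-kernel-masses}, in matching the harmonic sums $b_t\mathcal B_t\sim 1/(4\pi t^2)$ against the linear profiles $d_{N,t}$ and $\delta_{N,t}$. Here that work is simply assembled.
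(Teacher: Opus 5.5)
Your proposal is correct and follows the paper's proof essentially verbatim. You use the same exact three-term split of $\Xi_N^{(\lambda)}-\Xi_{N+1}^{(\lambda)}$ into the $\varkappa$-mass, the coefficient-weighted $\mathcal E$-term and the kernel step, take the pieces from Lemma~\ref{lem:principal-diagonal-mass} and \eqref{eq:weighted-E-refined}--\eqref{eq:weighted-K-step-refined}, and your added remarks on absolute convergence and on uniformity of the three error terms are sound, as is the cancellation of the logarithmic coefficients to $-1/(4\pi)$.
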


\begin{proof}
Split the correction decrement as
\begin{align*}
 \Xi_N^{(\lambda)}-\Xi_{N+1}^{(\lambda)}
 ={}&\sum_{u\ge1}
 \bigl(\mathcal A_{N,u}^{(\lambda)}-\mathcal A_{N+1,u}^{(\lambda)}\bigr)
 \varkappa_{2u}\\
 &+\sum_{u\ge1}
 \bigl(\mathcal A_{N,u}^{(\lambda)}-\mathcal A_{N+1,u}^{(\lambda)}\bigr)
 \mathcal E_{N-1,2u}^{(\lambda)}\\
 &+\sum_{u\ge1}\mathcal A_{N+1,u}^{(\lambda)}
 \bigl(K_{N-1,u}^{(\lambda)}-K_{N,u}^{(\lambda)}\bigr).
\end{align*}
Lemma~\ref{lem:principal-diagonal-mass} gives the first line, while
Lemma~\ref{lem:log-kernel-masses} gives the last two.  Since $\vartheta_\lambda=\lambda+1/2$,
\[
 -\frac{\lambda+1}{2\pi}
 +\frac{\vartheta_\lambda}{4\pi}
 +\frac{\vartheta_\lambda}{4\pi}
 =-\frac1{4\pi}.
\]
The cancellation is the key point: although all three pieces separately
depend on the shape, their logarithmic coefficient does not.  This is
\eqref{eq:Xi-decrement-local-uniform}.

\end{proof}

\subsection{From the correction expansion to the transition}

\begin{proof}[Proof of Theorem~\ref{thm:transition-asymptotics}]
Combining Proposition~\ref{prop:Xi-decrement-limit} with the unitary expansion
\eqref{eq:LUE-decrement-refined} gives \eqref{eq:real-decrement-refined}.
The level expansion requires no new spectral calculation.  Telescoping from
the common limit in Proposition~\ref{prop:real-shape-basics} gives
\[
 \alpha_{\mathbb R}^{(\lambda)}(N)-\frac8{3\pi}
 =\sum_{j=N}^\infty \DReal_j(\lambda).
\]
The standard tail estimates
\[
 \sum_{j=N}^\infty j^{-2}=\frac1N+O(N^{-2}),\qquad
 \sum_{j=N}^\infty\frac{\log j}{j^3}
 =\frac{\log N}{2N^2}+O(N^{-2}),
\]
together with $\sum_{j=N}^\infty j^{-3}=O(N^{-2})$ give
\eqref{eq:real-level-refined}.

If $\lambda\ne\lambda_*$, the leading terms in the two expansions have the
sign of $\lambda-\lambda_*$.  At $\lambda=\lambda_*$ they vanish, and
$3-4\lambda_*^2>0$ makes the logarithmic terms positive and dominant for all
sufficiently large $N$, which gives the sign statements in the theorem.
\end{proof}

\begin{proof}[Proof of Corollary~\ref{cor:crossings}]
For the decrement, Theorem~\ref{thm:global-parameter-monotonicity} and the
endpoint theorems show that $\DReal_N$ has a unique zero $\lambda_N^{\mathrm{dec}}\in(0,1)$, and the
zero is simple because $\partial_\lambda\DReal_N>0$.  For the level,
Proposition~\ref{prop:real-shape-basics} gives strict monotonicity in
$\lambda$ and the common limiting value.  Together with the
endpoint Theorems~\ref{thm:square-real} and \ref{thm:main}, this places the
values at $\lambda=0$ and $\lambda=1$ on opposite sides of $8/(3\pi)$, so the
level crossing is unique as well.

The first-order terms in \eqref{eq:real-decrement-refined} and
\eqref{eq:real-level-refined} imply that both roots tend to $\lambda_*$.  At
the decrement root,
\[
 0=\frac2\pi\left(\lambda_N^{\mathrm{dec}}-\lambda_*\right)
 +\frac{3-4(\lambda_N^{\mathrm{dec}})^2}{8\pi}
   \frac{\log N}{N}+O(N^{-1}).
\]
The equation first gives
$\lambda_N^{\mathrm{dec}}-\lambda_*=O((\log N)/N)$.  Replacing
$(\lambda_N^{\mathrm{dec}})^2$ by $\lambda_*^2$ in the logarithmic
coefficient changes the equation by $o(N^{-1})$, which gives
\eqref{eq:lambdaN-convergence}.  The same argument applied to the level root
gives \eqref{eq:lambdaN-level-convergence}.  Subtracting the two expansions
then yields
\[
 \lambda_N^{\mathrm{lev}}-\lambda_N^{\mathrm{dec}}
 =\frac{3-4\lambda_*^2}{32}\frac{\log N}{N}+O(N^{-1})>0
\]
for all sufficiently large $N$, and also
$\lambda_N^{\mathrm{lev}}<\lambda_*$ eventually.
\end{proof}

\section{Conclusion}

The real Laguerre half moment exhibits a finite-dimensional shape
transition.  The square point and the region $\lambda\ge1$ lie on
opposite sides of it, and the monotonicity in the transition strip rules out
any secondary change of direction.  The crossing of the Marchenko--Pastur
level is different from the crossing of the decrement.  Both thresholds
converge to $\lambda_*=1-\pi/4$, but
\begin{align*}
 \lambda_N^{\mathrm{dec}}
 &=\lambda_*-\frac{3-4\lambda_*^2}{16}\frac{\log N}{N}+O(N^{-1}),\\
 \lambda_N^{\mathrm{lev}}
 &=\lambda_*-\frac{3-4\lambda_*^2}{32}\frac{\log N}{N}+O(N^{-1}).
\end{align*}
The interval between them is a genuine finite-size regime: the mean
is still below its limit, although the next step in dimension is already
downward.

The mechanism behind the picture is the same in all three regimes.  After the
real mean is split into a unitary half moment and a positive orthogonal
correction, the square case, the regime $\lambda\ge1$, and the transition
strip become different comparisons of one diagonal series.  The recurrence theory
of Paper~I controls the unitary part, while Abreu--Patil provide the sharper square
estimate.  The orthogonal series then
carries the symmetry-class effect.  At second order its
shape-dependent logarithmic contributions cancel, leaving the coefficient
$-1/(4\pi)$; this is what makes the separation of the two transition points
explicit.

\appendix
\section{Finite-dimensional checks}\label{app:finite-checks}

\subsection{Square endpoint: the first three dimensions}
Recall the quantity
\[
 I_N=\Xi_N^{(0)}-\Xi_{N+1}^{(0)}-\Gamma_{N,0}
\]
from the proof of Proposition~\ref{prop:square-one-diagonal-comparison}.  That
proof already
establishes the required reserve for $N\ge4$.  The remaining dimensions are
checked as follows.

For $N=3$, use $\log2<7/10$, $265/153<\sqrt3<26/15$, and $\pi<22/7$ in
\eqref{eq:square-explicit-reserve}.  Since
\[
 2\sqrt3+\frac32-6\log2
 >\frac{5845}{7650},
\]
we obtain
\[
 I_3>
 \frac{5845/7650}{192(22/7)(26/15)}
 >\frac1{1440}
 =\frac1{160\cdot3^2}.
\]
For the first two dimensions, direct integration gives
\begin{align*}
 I_1&=\frac{-4\sqrt2-\pi+2\sqrt2\,\pi}{4\sqrt\pi},\\
 I_2&=\frac{8\sqrt6+9\pi+6\sqrt3\,\pi-18\sqrt2\,\pi}
 {36\sqrt\pi}.
\end{align*}
The elementary rational bounds
$157/50<\pi<22/7$, $7/5<\sqrt2<99/70$,
$265/153<\sqrt3<26/15$, $2449/1000<\sqrt6<49/20$, and
$\sqrt\pi<9/5$ give
\[
 I_1>\frac{43}{6300}>\frac1{160},\qquad
 I_2>\frac{165751}{20241900}>\frac1{640}.
\]

\subsection{Derivative comparison in the first two dimensions}
\begin{proof}[Proof of Lemma~\ref{lem:low-dimensional-derivative}]
These two cases require no new idea; the uniform estimates used for
$N\ge3$ are simply too coarse in the first two dimensions.

\smallskip\noindent\emph{The case $N=1$.}
Here $K_{0,u}^{(\lambda)}=b_{2u}$ is independent of $\lambda$.  Put
$x=(1+\lambda)/2\in[1/2,1]$.  From
\eqref{eq:A-lambda-log-derivative} and \eqref{eq:g-bounds},
\[
 \partial_\lambda\log \mathcal A_{1,1}^{(\lambda)}\le\frac18,
 \qquad
 \partial_\lambda \mathcal A_{1,u}^{(\lambda)}\le0\quad(u\ge2).
\]
Since $\mathcal A_{1,1}^{(\lambda)}\le1/2$ and $b_2=1/8$,
\[
 (\Xi_1')_+<\frac1{128}.
\]

For $\Xi_2$, put $y=1+\lambda/2\in[1,3/2]$.  The lower bound
$\mathfrak g(y)>1/(2y)\ge1/3$ from \eqref{eq:g-bounds}, together with
\eqref{eq:A-lambda-log-derivative},
$K_{1,u}^{(\lambda)}\le b_{2u}$, and $K_{1,u}'(\lambda)\ge0$, gives
\[
 (-\Xi_2')_+\le\frac{\sqrt{3/2}}4\,\Sigma_2,
 \qquad
 \Sigma_2:=\sum_{u\ge1}\frac{(1)_u}{(3/2)_u}
 [\psi(u+3/2)-\psi(3/2)]b_{2u}.
\]
The only scalar estimate needed here is
\begin{equation}\label{eq:Sigma2-bound}
 \Sigma_2<\frac3{20}.
\end{equation}
We use the closed form
\[
 \Sigma_2=\frac{10\sqrt2-16}{9}
 +\frac{4\sqrt2}{3}\log\frac{1+\sqrt2}{2}.
\]
One way to derive it is to set
$\mathcal T(\omega)=\sum_{u\ge1}(1)_u(\omega)_u^{-1}b_{2u}$.
The series and its first derivative converge normally near $\omega=3/2$,
and $\Sigma_2=-\mathcal T'(3/2)$.  The beta representation and the even part
of $1-\sqrt{1-z}$ yield
\[
 \mathcal T(\omega)=(\omega-1)\int_0^1(1-t)^{\omega-2}
 \left[1-\frac{\sqrt{1-\sqrt t}+\sqrt{1+\sqrt t}}2\right]\dd t.
\]
Differentiating at $\omega=3/2$ and substituting $t=z^2$ gives the displayed
closed form.  Now set $a=(\sqrt2-1)/2$.  Since
$\log(1+a)<a-a^2/2+a^3/3$, it follows that
\[
 \Sigma_2<\frac{7-4\sqrt2}{9}<\frac3{20},
\]
which proves \eqref{eq:Sigma2-bound}.

Since $\sqrt{3/2}<5/4$,
\[
 \mathcal X_1'(\lambda)
 =\Xi_1'(\lambda)-\Xi_2'(\lambda)
 <\frac1{128}+\frac3{64}=\frac7{128}.
\]
On the other hand,
\[
 \mathfrak v_1=\frac1{4\pi\sqrt2}>\frac{49}{880},
 \qquad
 \frac{49}{880}-\frac7{128}=\frac7{7040}>0.
\]
Therefore $\mathcal X_1'(\lambda)<\mathfrak v_1$.

\smallskip\noindent\emph{The case $N=2$.}
Put $x=1+\lambda/2\in[1,3/2]$ and write
$R_{2,u}=\mathcal A_{3,u}/\mathcal A_{2,u}$.
For $u\ge2$, retaining only the $j=0,1$ terms in the sum in
\eqref{eq:R-shape-derivative} already gives
\[
 \partial_\lambda\log R_{2,u}
 \ge
 \frac{3(4x^4+8x^3+x^2-6x-2)}
 {2x(x+1)(x+2)(2x+1)(2x+3)(4x+1)}>0.
\]
At $\lambda=0$, the factorization \eqref{eq:R-factorization} gives
$R_{2,u}^{(0)}>2/3$, while \eqref{eq:R-shape-one} gives
$R_{2,u}^{(1)}<1$.  It follows that
\[
 \frac23<R_{2,u}<1,\qquad u\ge2,\quad 0\le\lambda\le1.
\]
For $u=1$, \eqref{eq:R-factorization} gives
\[
 R_{2,1}
 =\sqrt{\frac23}\frac{(x+1/4)(x+1/2)}{x(x+1)},
 \qquad \frac23<R_{2,1}<1,
\]
and
\[
 \partial_\lambda\log R_{2,1}
 =\frac{2x^2-2x-1}{2x(x+1)(2x+1)(4x+1)}
 \ge-\frac1{60},
\]
because the difference from $-1/60$ equals
\[
 \frac{(x-1)(8x^3+22x^2+89x+30)}
 {60x(x+1)(2x+1)(4x+1)}.
\]

Use the same four-sum decomposition of $\mathcal X_N'$ as in the proof of
Lemma~\ref{lem:Xi-loss-derivative-upper}, now with $N=2$, and call its terms
$U_1,\ldots,U_4$.  The third term is nonpositive.  For the first one,
\[
 \mathcal A_{2,u}'-\mathcal A_{3,u}'
 =\mathcal A_{2,u}'(1-R_{2,u})-\mathcal A_{2,u}R_{2,u}'.
\]
Here $\mathcal A_{2,u}'<0$ and $R_{2,u}<1$, so no positive contribution is
possible for $u\ge2$, where $R_{2,u}'\ge0$; for $u=1$ the bound
$\partial_\lambda\log R_{2,1}\ge-1/60$ gives
\[
 U_1<\frac{\mathcal A_{2,1}}{60}K_{1,1}
 <\frac1{1440},
 \qquad U_3\le0,
\]
because $\mathcal A_{2,1}\le\mathcal A_{2,1}^{(0)}<1/3$ and
$K_{1,1}\le1/8$.
For the second term, $1-R_{2,u}<1/3$ together with
Lemma~\ref{lem:kernel-variation},
\eqref{eq:A-prefactor-strip}, and \eqref{eq:digamma-sum} yields
\[
 U_2<\frac{\sqrt{3/2}}{24\pi}.
\]
For the last term, the adjacent-kernel calculation in the proof of
Lemma~\ref{lem:Xi-loss-derivative-upper} can be reused with the sharper
factor $N/(N+\vartheta_\lambda)\le4/5$.  Lemma~\ref{lem:bB-tail} then gives
\[
 U_4<\frac{16}{225\pi\sqrt3}.
\]
Combining these estimates,
\[
 \mathcal X_2'(\lambda)
 <\frac1{1440}
 +\frac{\sqrt{3/2}}{24\pi}
 +\frac{16}{225\pi\sqrt3}.
\]
The comparison value is
\[
 \mathfrak v_2=\frac{25}{108\pi\sqrt6}.
\]
After multiplication by $\pi$, the elementary bounds
\[
 \pi<\frac{22}{7},\qquad
 \sqrt{\frac32}<\frac{49}{40},\qquad
 \sqrt3>\frac{19}{11},\qquad
 \sqrt6<\frac{49}{20}
\]
give
\[
 \frac{\pi}{1440}
 +\frac{\sqrt{3/2}}{24}
 +\frac{16}{225\sqrt3}
 <\frac{20087}{212800}
 <\frac{125}{1323}
 <\frac{25}{108\sqrt6}.
\]
Hence $\mathcal X_2'(\lambda)<\mathfrak v_2$ as well.
\end{proof}

\section*{Acknowledgement}
This work was supported by the Slovak Research and Development Agency under
contract No.~APVV-25-0144. During the preparation of this work, the author used ChatGPT (OpenAI) as an auxiliary tool for literature searches, organization and language editing of the exposition, and for checking and simplifying intermediate calculations, asymptotic expansions, and parts of proofs. The author takes full responsibility for the content.

\end{document}